\numberwithin{equation}{section} 
\theoremstyle{plain}
\newtheorem{thm}{Theorem}[section] 
\newtheorem{cor}[thm]{Corollary}
\newtheorem{prop}[thm]{Proposition}
\newtheorem{lem}[thm]{Lemma}
\theoremstyle{definition} 
\newtheorem{defn}[thm]{Definition}
\newtheorem{lem-defn}[thm]{Lemma-Definition}
\newtheorem{setting}[thm]{Setting}
\newtheorem{eg}[thm]{Example} 
\newtheorem*{notation}{Notation}
\theoremstyle{remark}
\newtheorem{rem}[thm]{Remark}
\newtheorem*{acknowledgement}{Acknowledgments}
\def\ge{\geqslant}
\def\le{\leqslant}
\def\phi{\varphi}
\def\epsilon{\varepsilon}
\def\bar{\overline}
\def\mapsto{\longmapsto}
\newcommand{\sO}{\mathcal{O}}
\newcommand{\F}{\mathbb{F}}
\newcommand{\N}{\mathbb{N}}
\newcommand{\Q}{\mathbb{Q}} 
\newcommand{\C}{\mathbb{C}} 
\newcommand{\R}{\mathbb{R}} 
\newcommand{\Z}{\mathbb{Z}}
\newcommand{\ba}{\mathfrak{a}}
\newcommand{\m}{\mathfrak{m}}
\newcommand{\n}{\mathfrak{n}}
\newcommand{\p}{\mathfrak{p}}
\newcommand{\q}{\mathfrak{q}}
\newcommand{\adj}{\mathrm{adj}}
\newsavebox{\circlebox}
\savebox{\circlebox}{\fontencoding{OMS}\selectfont\Large\char13}
\newlength{\circleboxwdht}
\def\Hom{\operatorname{Hom}}
\def\Spec{\operatorname{Spec}}
\def\Proj{\operatorname{Proj}}
\def\Supp{\operatorname{Supp}}
\def\Div{\operatorname{div}}
\def\ord{\operatorname{ord}}
\def\Tr{\operatorname{Tr}}
\def\B{\mathcal{B}}
\DeclareMathOperator*{\ulim}{ulim}
\def\Ann{\operatorname{Ann}}
\def\sL{\mathcal{L}}
\def\Ht{\operatorname{ht}}
\def\J{\mathcal{J}}
\title{On the behavior of adjoint ideals under pure morphisms}
\author{Shunsuke Takagi}
\address{Graduate School of Mathematical Sciences, University of Tokyo, 3-8-1 Komaba, Meguro-ku, Tokyo 153-8914, Japan}
\email{stakagi@ms.u-tokyo.ac.jp}
\author{Tatsuki Yamaguchi}
\address{Department of Mathematics, Institute of Science Tokyo, 2-12-1 Ookayama, Meguro-ku, Tokyo 152-8551, Japan}
\email{yamaguchi.t.bp@m.titech.ac.jp}
\subjclass[2020]{Primary 14B05; Secondary 03C20, 13A35, 14F18}
\dedicatory{Dedicated to Professor Keiji Oguiso on the occasion of his sixtieth birthday.}
\begin{document}

\begin{abstract}
We characterize adjoint ideal sheaves via ultraproducts and study their behavior under pure morphisms by utilizing this characterization. 
In particular, given a pure morphism $f:Y \to X$ between normal quasi-projective complex varieties, a reduced divisor $D$ and an effective $\Q$-Weil divisor $\Gamma$ on $X$ with no common components, we have the following result: 
if the cycle-theoretic pullback $E:=f^{\natural}D$ is reduced and $(Y, E+f^*\Gamma)$ is of plt type along $E$, then $(X, D+\Gamma)$ is of plt type along $D$. 
This provides an affirmative answer to a question posed by Z.~Zhuang \cite{Zhu}. 
\end{abstract}

\maketitle
\markboth{S.~TAKAGI and T.~YAMAGUCHI}{ADJOINT IDEALS AND PURE MORPHISMS}

\tableofcontents

%%%%%%%%%%%%%%%%%%%%%%%%%%%%%%%%%%%%%%%%%%%%%%%%%%%%%%%%%%%%%%%%%%%%%%%%%%%%%%
\section{Introduction}
A ring extension $R \hookrightarrow S$ is said to be \textit{pure} (also known as universally injective) if the natural map $M=M \otimes_R R \to M \otimes_R S$ is injective for every $R$-module $M$. 
For example, faithfully flat ring extensions are pure. 
Another important example of pure ring extensions occurs when a linearly reductive group $G$ acts on an affine variety $Y$ and $\pi:Y \to X:=Y//G$ is the quotient morphism; in this case, the associated ring homomorphism $\sO_X \to \pi_*\sO_Y$ is pure.  
Given a pure ring extension $R \hookrightarrow S$, a natural and classical question to ask is which  geometric properties descend from $S$ to $R$. 
In this paper, we explore this question for $R$ and $S$ that are essentially of finite type over the field $\C$ of complex numbers. 

A first result in this direction is due to Boutot \cite{Bo}. 
He proved that if $\Spec S$ has rational singularities, so does $\Spec R$. 
When $R \hookrightarrow S$ splits, Kov\'acs \cite{Kov} proved that if $\Spec S$ has Du Bois singularities, so does $\Spec R$. 
More recently, Godfrey and Murayama \cite{GM} extended this result without assuming the splitting.  
In this paper, we devote our attention to singularities appearing in the minimal model program, namely klt, plt, and lc singularities. 
Classical singularities in the minimal model program are $\Q$-Gorenstein by definition, and Schoutens \cite{Sch05} showed, under the assumption that $R$ and $S$ are both $\Q$-Gorenstein, that if $\Spec S$ has klt singularities, then $\Spec R$ does as well. 
However, $R$ is not necessarily $\Q$-Gorenstein even if $S$ is regular, and consequently, the property of being (classical) klt or lc does not descend under pure ring extensions in general.
On the other hand, de Fernex-Hacon \cite{dFH} generalized the definitions of klt and lc singularities to the non-$\mathbb{Q}$-Gorenstein setting, which are called singularities of klt and lc types, respectively (see Definition \ref{adjoint def} for their definitions). 
 Our study focuses on these generalized singularities.
Braun et al.~\cite{BGLM} showed that if  $\Spec S$ is of klt type and a linearly reductive group $G$ acts on $S$, then $\Spec S^G$ is also of klt type, where $S^G$ is the subring of invariants under the action of $G$. 
As mentioned above, $S^G$ is a pure subring of $S$.  
Very recently, Zhuang generalized their result as follows: 

\begin{thm}[\cite{Zhu}]\label{Zhuang thm}
If $\Spec S$ is of klt type, then so is $\Spec R$. 
\end{thm}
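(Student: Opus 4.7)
The strategy is to translate the klt-type condition on $\Spec S$ into the triviality of a de Fernex--Hacon multiplier ideal, transport this to $\Spec R$ using the paper's ultraproduct characterization of adjoint/multiplier ideals together with descent of positive-characteristic $F$-regularity along pure subrings, and read off the witnessing divisor on $\Spec R$.

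First I would unpack the hypothesis. By the de Fernex--Hacon framework, $\Spec S$ being of klt type amounts to the existence of an effective $\mathbb{Q}$-Weil divisor $\Delta_S$ on $Y = \Spec S$ with $K_Y + \Delta_S$ being $\mathbb{Q}$-Cartier and $\mathcal{J}(Y, \Delta_S) = \mathcal{O}_Y$. A small perturbation, replacing $\Delta_S$ by a $\mathbb{Q}$-linearly equivalent effective divisor, lets one assume $\Delta_S = \tfrac{1}{N}\Div(g)$ for a single $g \in S \setminus \{0\}$ and $N \gg 0$, converting the klt condition into an ideal-theoretic assertion about $(S, g, N)$ that is amenable to the ultraproduct machinery.

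Next I would invoke the paper's main technical input: $\mathcal{J}(Y, \Delta_S)$ is obtained as an ultralimit $\ulim_p \tau(Y_p, \Delta_{S,p})$ of generalized test ideals attached to positive-characteristic reductions. The klt hypothesis thus translates to strong $F$-regularity of $(S_p, \Delta_{S,p})$ for ultrafilter-almost all $p$. Since purity spreads out, $R_p \hookrightarrow S_p$ remains pure for almost all $p$, and a Hochster--Huneke-style descent of $F$-regularity along pure subrings (in its version for pairs) produces effective $\mathbb{Q}$-divisors $\Delta_{R,p}$ on $\Spec R_p$, chosen coherently in $p$, with $\tau(R_p, \Delta_{R,p}) = R_p$. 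Applying the ultraproduct characterization in reverse yields an effective $\mathbb{Q}$-divisor $\Delta_R$ on $X = \Spec R$ with $K_X + \Delta_R$ $\mathbb{Q}$-Cartier and $\mathcal{J}(X, \Delta_R) = \mathcal{O}_X$, showing that $\Spec R$ is of klt type.

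The main obstacle is the leap from ideal-theoretic descent to a divisorial conclusion. Because $f: Y \to X$ is only pure---neither flat nor finite---there is no direct pullback or pushforward of Weil divisors compatible with the $\mathbb{Q}$-Cartier property of canonical classes, so $\Delta_R$ cannot be constructed from $\Delta_S$ by a transparent geometric operation. Arranging the family $\{\Delta_{R,p}\}_p$ to assemble into an ultraproduct divisor whose associated log canonical class is $\mathbb{Q}$-Cartier is the subtle point, and is precisely what the paper's ultraproduct characterization of adjoint ideal sheaves is designed to control; a naive reduction-mod-$p$ argument appears insufficient for this step.
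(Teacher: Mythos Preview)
Your proposal contains a genuine gap at the pivotal step: the assertion that ``purity spreads out, $R_p \hookrightarrow S_p$ remains pure for almost all $p$'' is false. As the paper explicitly notes (see the Remark following Proposition~\ref{reduction modulo p injectivity} and the discussion in the introduction), purity is \emph{not} preserved under taking approximations or reduction modulo $p$; this is the content of \cite{HJPS}. Consequently the Hochster--Huneke descent of $F$-regularity along pure subrings cannot be applied at the level of the $R_p \hookrightarrow S_p$, and the family $\{\Delta_{R,p}\}_p$ you propose to construct never gets off the ground. Your closing remark that ``a naive reduction-mod-$p$ argument appears insufficient'' is correct, but it undercuts the very argument you sketched two paragraphs earlier.

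The paper's route (via Corollary~\ref{cor plt case}, specialized to $D=0$, $\Gamma=0$) avoids this obstruction by never passing purity to positive characteristic. Instead, Theorem~\ref{BCM ajoint ideal equals adjoint ideal} identifies the multiplier/adjoint ideal with the BCM test ideal $\tau_{\B,D}$ attached to the ultraproduct algebra $\B(R)$, and Proposition~\ref{pure ring extension reduction modulo p} produces a natural inclusion $\B(I_D,D+\Delta)\hookrightarrow\B(I_E,E+\Delta')$ of the relevant $\B$-modules. Purity is then used \emph{once, in characteristic zero}, to obtain the injection $M\hookrightarrow M\otimes_R S$ in the proof of Theorem~\ref{adjoint ideals under pure ring extensions}; this yields the containment of closure submodules and hence of adjoint ideals. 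The non-$\Q$-Gorenstein case is handled not by assembling a divisor from positive-characteristic data, but by Zhuang's $\Q$-Cartierization trick (Lemma~\ref{lemma finite generation} and Theorem~\ref{adjoint ideals under pure ring extensions in rings with finitely generated anti-canonical algebra}), which reduces to the $\Q$-Cartier situation geometrically on a small birational model.
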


The main result of this paper is a generalization of Theorem \ref{Zhuang thm} formulated in terms of adjoint ideals. 
To state this, we first explain plt type singularities and adjoint ideals. 
Let $D$ be a reduced divisor and $\Gamma$ be an effective $\R$-Weil divisor on a normal quasi-projective variety $X$ such that $D$ and $\Gamma$ have no common components.  
When $K_X+D+\Gamma$ is $\R$-Cartier, the pair $(X, D+\Gamma)$ is said to be \textit{plt} along $D$ if its discrepancy at $E$ is greater than $-1$ for every prime divisor $E$ over $X$ that is not an irreducible component of the strict transform of $D$. 
When $K_X+D+\Gamma$ is not necessarily $\R$-Cartier, we say that $(X, D+\Gamma)$ is of \textit{plt type} along $D$ if there exists an effective $\R$-Weil divisor $\Delta$ on $X$ with no common components with $D$ such that $K_X+D+\Gamma+\Delta$ is $\R$-Cartier and $(X, D+\Gamma+\Delta)$ is plt along $D$. 
Note that when $D=0$, being of plt type along $D$ is nothing but being of klt type. 
Our adjoint ideal $\adj_D(X, D+\Gamma)$ is a variant of multiplier ideals and defines the non-plt-type locus of the pair $(X, D+\Gamma)$ along $D$ (see Definition \ref{adjoint def} for the precise definition).  
We obtain the following results on the behavior of adjoint ideals under pure morphisms. 
Since $\adj_D(X, D+\Gamma)$ is nothing but the multiplier ideal $\J(X, \Gamma)$ when $D=0$, this gives a generalization of \cite[Theorem 1.2]{Yam}. 

\begin{thm}[Theorem \ref{faithfully flat thm}, Corollary \ref{adjoint ideal under pure morphism in Q-Cartier case}, Theorem \ref{adjoint ideals under pure ring extensions in rings with finitely generated anti-canonical algebra}]\label{main thm1}
Let $R \hookrightarrow S$ be a pure ring extension of normal domains of finite type over $\C$ and $Y:=\Spec S \xrightarrow{f} X:=\Spec R$ denote the corresponding morphism of affine varieties. 
Let $D$ be a reduced divisor and $\Gamma$ be an effective $\Q$-Weil divisor on $X$ such that $D$ and $\Gamma$ have no common components. 
Suppose that the $\sO_X$-algebra $\bigoplus_{i \ge 0}\sO_X(\lfloor -i(K_X+D+\Gamma) \rfloor)$ is finitely generated and the cycle-theoretic pullback $E:=f^{\natural}D$ of $D$ is a reduced divisor on $Y$ (see Definition \ref{pullback} for the definitions of cycle-theoretic pullback). 

\begin{enumerate}
\item If $f$ is faithfully flat, then 
\[
\adj_E(Y, E+f^*\Gamma) \subseteq \adj_D(X, D+\Gamma) \sO_Y.
\]
\item Assume that $E$ is a disjoint union of prime divisors and that one of the following conditions holds. 
\begin{enumerate}
\item $K_X+D+\Gamma$ is $\Q$-Cartier. 
\item The $\sO_Y$-algebra $\bigoplus_{i \ge 0}\sO_Y(iB)$ is finitely generated for every reduced divisor  $B$ on $Y$.\footnote{This condition is satisfied, for example, if $Y$ is of klt type or $\Q$-factorial.}
\end{enumerate} 
Then 
\[
\adj_E(Y, E+f^*\Gamma) \cap \sO_X \subseteq \adj_D(X, D+\Gamma).
\]
\end{enumerate}
\end{thm}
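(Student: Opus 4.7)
The plan is to treat the three sub-statements in turn, with the ultraproduct characterization of adjoint ideal sheaves established earlier in the paper as the common technical backbone.

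For part (1), the faithfully flat case, one uses the finite generation of $\bigoplus_{i\ge 0}\sO_X(\lfloor -i(K_X+D+\Gamma)\rfloor)$ to produce an effective $\Q$-Weil divisor $\Delta$ on $X$, with no component equal to $D$, such that $K_X+D+\Gamma+\Delta$ is $\Q$-Cartier; this brings us into a classical setting. Taking a log resolution $\pi:\tilde{X}\to X$ of $(X, D+\Gamma+\Delta)$ and forming the base change $\tilde{Y}=\tilde{X}\times_X Y$, faithful flatness guarantees that pullbacks of $\Q$-Cartier divisors commute with $f^*$ and that discrepancies and strict transforms match across the square; the containment then follows by applying flat base change to the pushforward defining the adjoint ideal.

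For part (2), the ultraproduct characterization is essential. Reducing the data mod $p$ for almost all primes yields pure extensions $R_p\hookrightarrow S_p$ and corresponding adjoint test ideals in positive characteristic. The standard arguments for test-ideal behavior under pure maps supply
\[
\tau_{\mathrm{adj},E_p}(Y_p, E_p+f_p^*\Gamma_p)\cap R_p \;\subseteq\; \tau_{\mathrm{adj},D_p}(X_p, D_p+\Gamma_p),
\]
and passing to the ultralimit yields the characteristic-zero inclusion. Case (a) is essentially immediate because $K_X+D+\Gamma$ is $\Q$-Cartier and its $\Q$-Cartierness is preserved under reduction mod $p$; case (b) instead uses the finite generation assumption on $Y$ to construct matching small modifications on $X$ and $Y$ so that the adjoint ideals on both sides are defined and comparable.

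The principal obstacle is case (b): without global $\Q$-Cartierness on $X$, the adjoint ideal is defined via a limit over effective perturbations $\Delta$, and one must choose these perturbations so that their pullbacks $f^*\Delta$ remain admissible for the adjoint ideal on $Y$. The hypothesis that $E=f^{\natural}D$ is a disjoint union of primes is used to control cycle-theoretic pullback along $D$, making the two sides of the ultraproduct inclusion genuinely comparable.
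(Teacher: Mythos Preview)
Your proposal contains a fundamental gap in part (2) that undermines the entire approach there. You write that ``reducing the data mod $p$ for almost all primes yields pure extensions $R_p\hookrightarrow S_p$,'' but this is false: purity is \emph{not} preserved under reduction modulo $p$ (see \cite{HJPS}, and the remark following Proposition \ref{reduction modulo p injectivity}). This is precisely the obstacle the paper is built around, and it is why the ultraproduct machinery is developed rather than a straightforward spread-out argument. The paper's actual strategy keeps the purity of $R\hookrightarrow S$ in characteristic zero: one constructs the $\B(R)$-module $\B(I_D,D+\Delta)$ as an ultraproduct of the characteristic-$p$ modules $I_{D_p}^+(D_p+\Delta_p)$, proves (Theorem \ref{BCM ajoint ideal equals adjoint ideal}) that the adjoint ideal equals the annihilator of the resulting closure $0_M^{\B_D(D+\Delta)}$, and then uses purity of $R\hookrightarrow S$ \emph{only} for the injectivity of $M\hookrightarrow M\otimes_R S$ in the diagram comparing closures on $X$ and $Y$. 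The positive-characteristic input is Proposition \ref{pure ring extension reduction modulo p}, which gives inclusions $I_{D_p}^+(D_p+\Delta_p)\hookrightarrow I_{E_p}^+(E_p+\cdots)$ using only injectivity (not purity) of $R_p\to S_p$. So the ultraproduct is not a transparent repackaging of reduction mod $p$; it is what allows one to decouple the characteristic-$p$ test-ideal information from the characteristic-zero purity.

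Your approach to part (1) also has a gap, though a more repairable one. The fiber product $\widetilde{Y}=\widetilde{X}\times_X Y$ is in general neither smooth nor a log resolution of $(Y,E+f^*\Gamma)$, so you cannot compute $\adj_E(Y,E+f^*\Gamma)$ from it; and adding a $\Delta$ on $X$ goes the wrong way for the containment you want on $Y$. The paper instead exploits the fact that flatness \emph{is} preserved under reduction mod $p$ (Proposition \ref{proposition redutions mod p>0 flat local}): it chooses $\Delta$ on $Y$ to realize $\adj_E(Y,E+f^*\Gamma)$, invokes Theorem \ref{correspondence} to identify both adjoint ideals with test ideals after reduction, and then proves the flat-descent inequality for test ideals directly in characteristic $p$ (Proposition \ref{proposition flat descent of divisorial test ideals}). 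For part (2)(b), the paper does not try to match modifications on $X$ and $Y$ simultaneously; it takes the $\Q$-Cartierization $X'\to X$, covers $X'$ by affine opens $U_1$ on which $K_{U_1}+D'|_{U_1}+\Gamma'|_{U_1}$ is $\Q$-Cartier, and uses the finite generation hypothesis on $Y$ (Lemma \ref{lemma finite generation}) to produce a suitable affine $V_1$ over $U_1$ so that the $\Q$-Cartier case (Corollary \ref{adjoint ideal under pure morphism in Q-Cartier case}) applies.
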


Noting that the assumption of Theorem \ref{main thm1} (2) is satisfied when $(Y, E+f^*\Gamma)$ is of plt type along $E$, we have the following corollary, which generalizes Zhuang's result \cite[Theorem 2.10]{Zhu} and provides an affirmative answer to his question \cite[Question 2.13]{Zhu} in the klt case. 

\begin{cor}[Corollary \ref{cor plt case}]
Let $f:Y\to X$ be a pure morphism between normal quasi-projective complex varieties, 
$D$ be a reduced divisor and $\Gamma$ be an effective $\Q$-Weil divisor on $X$ such that $D$ and $\Gamma$ have no common components.  
Suppose that the cycle-theoretic pullback $E=f^\natural D$ of $D$ under $f$ is a reduced divisor on $Y$. 
If $(Y,E+f^* \Gamma )$ is of plt type along $E$, then $(X,D+\Gamma)$ is of plt type along $D$. 
In particular, if $(Y, f^* \Gamma )$ is of klt type, then $(X,\Gamma)$ is of klt type as well.
\end{cor}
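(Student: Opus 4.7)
The plan is to deduce the corollary from Theorem~\ref{main thm1}(2), using that by construction (cf.\ Definition~\ref{adjoint def}) the adjoint ideal $\adj_D(X, D+\Gamma)$ cuts out the non-plt-type locus along $D$; equivalently, $(X, D+\Gamma)$ is of plt type along $D$ if and only if $\adj_D(X, D+\Gamma) = \sO_X$ in a neighborhood of $D$, and analogously for $(Y, E+f^*\Gamma)$ along $E$.

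I would begin with standard local reductions. Since the conclusion is local on $X$, I would shrink $X$ to an affine open meeting only one component of $D$, so that $X = \Spec R$ and $Y = \Spec S$ are affine and $D$ is irreducible. The plt type assumption on $(Y, E+f^*\Gamma)$ along $E$, combined with the classical fact that the reduced boundary of a plt pair is normal, allows me, after possibly shrinking $X$ further, to assume that $E$ is a disjoint union of prime divisors. Moreover, if $\Delta_Y$ is an effective $\Q$-divisor on $Y$ with no component equal to $E$ such that $K_Y + E + f^*\Gamma + \Delta_Y$ is $\Q$-Cartier and the pair is plt along $E$, then replacing the coefficient of $E$ by $1-\varepsilon$ for small $\varepsilon > 0$ yields a klt pair, so $Y$ is of klt type; this verifies condition~(2)(b) of Theorem~\ref{main thm1}.

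The principal obstacle is verifying the remaining hypothesis of Theorem~\ref{main thm1}, namely the finite generation of $\bigoplus_{i \ge 0} \sO_X(\lfloor -i(K_X+D+\Gamma) \rfloor)$, since divisors do not descend directly under pure morphisms. I would try to produce, possibly after further shrinking $X$, an effective $\Q$-divisor $\Delta$ on $X$ with no component equal to $D$ such that $K_X + D + \Gamma + \Delta$ is $\Q$-Cartier; such a $\Delta$ supplies the required finite generation and, after replacing $\Gamma$ by $\Gamma+\Delta$, allows me to invoke case~(2)(a) of the theorem. A plausible construction uses the identity $R = S \cap \Frac(R)$ (a consequence of purity) to descend an appropriate rational section of some multiple of $K_Y + E + f^*\Gamma + \Delta_Y$ to $X$, chosen so that the resulting $\Delta$ is compatible with the plt type structure on $Y$; the note that being of plt type along $D$ for $(X, D+\Gamma)$ is implied by the analogous statement for $(X, D+\Gamma+\Delta)$ makes this replacement harmless.

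Granted the hypotheses, the conclusion is immediate: the plt type assumption on $(Y, E+f^*\Gamma)$ gives $\adj_E(Y, E+f^*\Gamma) = \sO_Y$ near $E$, so $\adj_E(Y, E+f^*\Gamma) \cap \sO_X = \sO_X$; Theorem~\ref{main thm1}(2) then forces $\adj_D(X, D+\Gamma) = \sO_X$, whence $(X, D+\Gamma)$ is of plt type along $D$. The final sentence of the corollary is the special case $D = 0$, in which $E = f^\natural D = 0$, the adjoint ideal reduces to the multiplier ideal $\J$, and plt type along $D = 0$ becomes klt type.
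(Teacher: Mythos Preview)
Your reductions---localizing, noting that plt type along $E$ forces $E$ to be a disjoint union of normal prime divisors, and observing that $Y$ is of klt type so that every $\bigoplus_{i\ge 0}\sO_Y(iB)$ is finitely generated---match the paper's argument, as does the final deduction from Theorem~\ref{main thm1}(2).

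The gap is exactly where you flag it: the finite generation of $\bigoplus_{i\ge 0}\sO_X(\lfloor -i(K_X+D+\Gamma)\rfloor)$. Your proposed fix, producing an effective $\Q$-divisor $\Delta$ on $X$ with $K_X+D+\Gamma+\Delta$ $\Q$-Cartier and then replacing $\Gamma$ by $\Gamma+\Delta$, does not close. After the replacement, Theorem~\ref{main thm1}(2) only yields $\adj_E(Y,E+f^*(\Gamma+\Delta))\cap\sO_X\subseteq\adj_D(X,D+\Gamma+\Delta)$, so you would need $(Y,E+f^*(\Gamma+\Delta))$ to be of plt type along $E$, not just $(Y,E+f^*\Gamma)$. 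Your suggestion to arrange this by ``descending a rational section of some multiple of $K_Y+E+f^*\Gamma+\Delta_Y$'' via $R=S\cap\Frac(R)$ does not work: for a pure morphism there is no comparison between $K_Y$ and $f^*K_X$, so a section on $Y$ carries no information about $\sO_X(-m(K_X+D+\Gamma))$; and even granting a $\Delta$ on $X$, you have no control over $f^*\Delta$ (the pullback $f^*$ of Definition~\ref{pullback} is not additive, and $f^*\Delta$ need not be bounded by $\Delta_Y$).

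The paper avoids this entirely: instead of manufacturing a $\Q$-Cartier boundary on $X$, it invokes \cite[Lemma~2.7]{Zhu}, which says that finite generation of divisorial algebras descends along pure morphisms. Since $Y$ is of klt type, every $\bigoplus_{i\ge 0}\sO_Y(iB)$ is finitely generated, and Zhuang's lemma then gives the finite generation of $\bigoplus_{i\ge 0}\sO_X(\lfloor -i(K_X+D+\Gamma)\rfloor)$ (and of $\bigoplus_{i\ge 0}\sO_X(\lfloor -i\Gamma\rfloor)$, which also ensures $f^*\Gamma$ is a $\Q$-divisor). With this in hand, Theorem~\ref{adjoint ideals under pure ring extensions in rings with finitely generated anti-canonical algebra} applies directly with the original $\Gamma$, and no boundary modification is needed.
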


We give a sketch of the proof of Theorem \ref{main thm1} below.
The proof relies primarily on the theory of test ideals as the main tool.
Originally introduced by Hochster-Huneke \cite{HH90} in the context of tight closure theory, test ideals have been shown to have a strong connection with multiplier ideals. This connection has inspired their generalization to the setting of log pairs.
When $K_X+D+\Gamma$ is $\Q$-Cartier, Takagi \cite{Tak08} proved that the adjoint ideal $\adj_D(X, D+\Gamma)$ of $(X, D+\Gamma)$ coincides, after reduction to characteristic $p \gg 0$,  with a variant of the generalized test ideal associated to $(X, D+\Gamma)$. 
We generalize this coincidence to the case where $K_X+D+\Gamma$ is not necessarily $\Q$-Cartier but the $\sO_X$-algebra $\bigoplus_{i \ge 0}\sO_X(\lfloor -i(K_X+D+\Gamma) \rfloor)$ is finitely generated. 
Then, since flatness is preserved under reduction modulo $p$, 
we can reduce Theorem \ref{main thm1} (1) to a problem on test ideals. 

The proof of Theorem \ref{main thm1} (2) is more involved because purity is not preserved under reduction modulo $p$ (see \cite{HJPS}).  
Therefore, we utilize ultraproducts rather than reduction modulo $p$. 
Using ultraproducts, Schoutens \cite{Sch04} constructed a ``canonical" big Cohen-Macaulay algebra $\mathcal{B}(R)$ over a normal local domain $R$ essentially of finite type over $\C$. 
As a generalization of the classical test ideal, 
P\'erez and R.~G. \cite{PR} introduced the notion of the BCM test ideal $\tau_B(R)$ associated to a big Cohen-Macaulay algebra $B$ (see also \cite{MS}), and Yamaguchi \cite{Yam} showed that the BCM test ideal $\tau_{\mathcal{B}(R)}(R)$ associated to $\mathcal{B}(R)$ is equal to the multiplier ideal $\J(\Spec R)$ if $R$ is $\Q$-Gorenstein. 
We introduce a new generalization $\tau_{\B, D}(R, D+\Gamma)$ of the ideal $\tau_{\mathcal{B}(R)}(R)$ for a prime divisor $D$ and an effective $\Q$-Weil divisor $\Gamma$ on $X:=\Spec R$ with no components supported on $D$. 
We then prove that this ideal is equal to the adjoint ideal $\adj_D(X, D+\Gamma)$ if $K_X+D+\Gamma$ is $\Q$-Cartier. 
Theorem \ref{main thm1} (2) (a) follows from this characterization of adjoint ideals. 
Specifically, by this characterization, the problem is reduced to showing the inclusion $\tau_{\B, E}(S, E+f^*\Gamma) \cap R \subseteq \tau_{\B, D}(R, D+\Gamma)$ of BCM test ideals, which is essentially true by definition.  
To illustrate the proof, we consider the case where  $D=E=\Gamma=0$ as a test case. 
In this situation, $\tau_{\B, D}(R, D+\Gamma)=\tau_{\B(R)}(R)=\bigcap_{M} \Ann_R 0^{\B(R)}_M$, where $M$ runs through all $R$-modules and $0^{\B(R)}_M$ is the kernel of the natural map $M \to M \otimes_R \B(R)$. 
Similarly, $\tau_{\B, E}(S, E+f^*\Gamma)=\tau_{\B(S)}(S)=\bigcap_{N} \Ann_S 0^{\B(S)}_N$, 
where $N$ runs through all $S$-modules.
For every $R$-module $M$, we have the following commutative diagram:  
\[
\xymatrix{
M \ar[r] \ar[d] & M \otimes_R S \ar[d] & \\
M \otimes_R \B(R) \ar[r] &  M \otimes_R \B(S) \ar@{=}[r] & (M \otimes_R S) \otimes_S \B(S),
}
\]
where the top horizontal map is injective by the purity of $R \hookrightarrow S$. 
The commutativity of this diagram implies that $0^{\B(R)}_M \subseteq 0^{\B(S)}_{M \otimes_R S}$, and consequently, 
\[
\tau_{\B(R)}(R)  \supseteq \bigcap_{M} \Ann_R 0^{\B(S)}_{M \otimes_R S}
=\bigcap_{M} \left(\Ann_S 0^{\B(S)}_{M \otimes_R S} \right) \cap R
\supseteq \tau_{\B(S)}(S) \cap R.
\]
This completes the proof of the inclusion of BCM test ideals. 

Finally, we briefly discuss the proof of Theorem \ref{main thm1} (2) (b). 
Following an idea of Zhuang \cite{Zhu},  we take some affine open subsets of $\Q$-Cartierizations to reduce the problem to the $\Q$-Cartier case.  
The $\Q$-Cartierization of a Weil divisor $B$ exists if the Rees algebra $\bigoplus_{i \ge 0} \sO(iB)$ is finitely generated. 
This is the reason why we assume the finite generation of certain Rees algebras in Theorem \ref{main thm1} (2) (b). 

Zhuang asked in \cite[Question 2.11]{Zhu} whether an analog of  Theorem \ref{Zhuang thm} holds for singularities of lc type. 
As another application of Theorem \ref{main thm1} (2), 
%As an application of a result of Yamaguchi \cite[Theorem 1.2]{Yam}, the prototype of Theorem 1.2 (2), 
we give a partial affirmative answer to this question.  
\begin{thm}[Theorem \ref{lc case}]
Let $\phi:Y\to X$ be a pure morphism between normal complex affine varieties and let $\Gamma$ be an effective $\Q$-Weil divisor on $X$ such that $K_X+\Gamma$ is $\Q$-Cartier. 
Assume that one of the following conditions holds. 
\begin{enumerate}[label=(\roman*)]
\item There exists an effective $\R$-Weil divisor $\Delta$ on $Y$ such that $K_Y+\phi^*\Gamma+\Delta$ is $\Q$-Cartier and no non-klt center of $(Y, \phi^*\Gamma+\Delta)$ dominates $X$. 
\item The non-klt-type locus of $(Y,\phi^*\Gamma)$ has dimension at most one. 
\end{enumerate}
If $(Y,\phi^*\Gamma)$ is of lc type, then $(X,\Gamma)$ is lc.  
\end{thm}
Here, the pair $(Y,\phi^*\Gamma)$ is said to be of \textit{lc type} if there exists an effective $\R$-Weil divisor $\Delta$ on $Y$ such that $K_Y+\phi^*\Gamma+\Delta$ is $\R$-Cartier and $(Y,\phi^*\Gamma+\Delta)$ is lc (see Definition \ref{adjoint def} (3)). 

%%%%%%%%%%%%%%%%%%%%%%%%%%%%%%%%%%%%%%%%%%%%%%%%%%%%%%%%%%%%%%%%%%%%%%%%%%%%%%

\begin{small}
\begin{acknowledgement}
The authors would like to thank Osamu Fujino, Kenta Sato and Chenyang Xu for valuable discussions. 
They are also grateful to the anonymous referee for their careful reading and numerous helpful suggestions.
The first author was partially supported by JSPS KAKENHI Grant Numbers 16H02141, 17H02831 and 20H00111 and the second author was partially supported by JSPS KAKENHI
Grant Number JP22J13150. 
\end{acknowledgement}
\end{small}

\begin{notation}
Throughout this paper, all rings are assumed to be commutative and with unit element and all schemes are assumed to be Noetherian and separated. 
All varieties are assumed to be quasi-projective. 
\end{notation}

%%%%%%%%%%%%%%%%%%%%%%%%%%%%%%%%%%%%%%%%%%%%%%%%%%%%%%%%%%%%%%%%%%%%%%%%%%%%%%
\section{Preliminaries}
%%%%%%%%%%%%%%%%%%%%%%%%%%%%%%%%%%%%%%%%%%%%%%%%%%%%%%%%%%%%%%%%%%%%%%%%%%%%%%

This section provides preliminary results needed for the rest of the paper. 

\subsection{Test ideals along divisors}
In this subsection, we recall the definition of test ideals along divisors.\footnote{Test ideals along divisors are referred to as divisorial test ideals in \cite{Tak08}.} 
The reader is referred to \cite{Tak08} and \cite{Tak13} for details.  
We will freely use the notation in \cite{TW}.

Suppose that $R$ is a normal domain of characteristic $p>0$ and $D$ is a reduced divisor on $X:=\Spec R$.  
Let $R^{\circ, D}$ denote the set of elements of $R$ not in any minimal prime of $I_D:=R(-D)$. 
Let $\Delta$ be an effective $\Q$-Weil divisor on $X$ that has no common components with $D$, let $\ba$ be an ideal of $R$ such that $\ba \cap R^{\circ, D} \ne \emptyset$ and $t \ge 0$ be a real number. 
We assume that $R$ is \textit{$F$-finite}, that is, the Frobenius map $F: R \to R$ is finite. 
This is equivalent to saying that the Frobenius pushforward $F_*R$ is a finitely generated $R$-module. 

\begin{defn}[cf.~\textup{\cite[Proposition 1.1]{Tak13}}]
The test ideal $\tau_D(R, D+\Delta, \ba^t)$ of the triple $(R, D+\Delta,\ba^t)$ along $D$ is defined as the unique smallest ideal $J$ of $R$ satisfying the following conditions:
\begin{enumerate}[label=(\alph*)]
\item $J \cap R^{\circ, D} \ne \emptyset$. 
\item For every integer $e \ge 0$ and every $\varphi \in \Hom_R(F^e_*R(\lceil (p^e-1)(D+\Delta) \rceil), R) \subseteq \Hom_R(F^e_*R, R)$, one has $\varphi(F^e_*(\ba^{\lceil t(p^e-1) \rceil}J)) \subseteq J$. 
\end{enumerate}
It is simply denoted by $\tau_D(R, D+\Delta)$ when $\ba=R$. 
\end{defn}

\begin{defn} \label{tight closure along D}
Suppose that $(R, \m)$ is local. 
For an $R$-module $M$, $0^{*_D(D+\Delta, \ba^t)}_{M}$ is the submodule of $M$ consisting of all elements $z \in M$ for which there exists an element $c \in R^{\circ, D}$ such that 
\[F^e_*(c \ba^{\lceil tp^e \rceil}) \otimes z=0 \in F^e_*R((p^e-1)D+\lceil p^e \Delta \rceil) \otimes_R M\]
for all large $e$. It is simply denoted as $0^{*_D(D+\Delta)}_{M}$ when $\ba=R$.
\end{defn}

\begin{prop}[cf.~\textup{\cite[Proposition 8.23]{HH90}, \cite[Theorem 6.3]{Schw10}}]\label{test ideal equiv def}
When $(R,\mathfrak{m})$ is local, the following ideals are equal to each other. 
\begin{enumerate}[label=$(\arabic*)$]
\item $\tau_{D}(R,D+\Delta,\ba^t)$. 
\item $\bigcap_M \Ann_R 0_{M}^{*_D(D+\Delta,\ba^t)}$, where $M$ runs through all $R$-modules.
\item $\Ann_R 0_{E}^{*_D(D+\Delta,\ba^t)}$, where $E=E_R(R/\m)$ is an injective hull of the residue field $R/\m$. 
\end{enumerate} 
\end{prop}
\begin{proof}
The equivalence of (1) and (3) follows from an argument very similar to that in the proof of \cite[Theorem 6.3]{Schw10}, which addresses the case $D=0$. 
The equivalence of (2) and (3) follows from an argument very similar to that in the proof of \cite[Proposition 8.23]{HH90}, which addresses the case $D=\Delta=0$ and $\ba=R$.   
\end{proof}

\begin{rem}\label{completion remark}
The formation of test ideals along divisors commutes with localization and completion (cf.~\cite[Corollary 3.6]{Tak08}). 
Therefore, by gluing, we can define test ideals along divisors for any $F$-finite normal schemes. 

Also, if $(R, \m)$ is local and $\widehat{R}$ denotes its $\m$-adic completion, then $E_R(R/\m)$ can be identified with $E_{\widehat{R}}(\widehat{R}/\m \widehat{R})$.
Under this identification, 
$0_{E_R(R/\m)}^{*_D(D+\Delta,\ba^t)}=
0_{E_{\widehat{R}}(\widehat{R}/\m \widehat{R})}^{*_{\widehat{D}}(\widehat{D}+\widehat{\Delta},(\ba \widehat{R})^t)}$, 
where $\widehat{D}$ and $\widehat{\Delta}$ are the flat pullbacks of $D$ and $\Delta$, respectively, via the natural map $\Spec \widehat{R} \to \Spec R$. 
Since the formation of test ideals along divisors commutes with completion, we have 
\begin{align*}
\Ann_{E_R(R/\m)} \tau_{D}(R,D+\Delta,\ba^t)
&=\Ann_{E_{\widehat{R}}(\widehat{R}/\m \widehat{R})} \tau_{D}(R,D+\Delta,\ba^t)\widehat{R}\\
&=\Ann_{E_{\widehat{R}}(\widehat{R}/\m \widehat{R})} \tau_{\widehat{D}}(\widehat{R}, \widehat{D}+\widehat{\Delta}, (\ba \widehat{R})^t)\\
&=0_{E_{\widehat{R}}(\widehat{R}/\m \widehat{R})}^{*_{\widehat{D}}(\widehat{D}+\widehat{\Delta},(\ba \widehat{R})^t)}\\
&=0_{E_R(R/\m)}^{*_D(D+\Delta,\ba^t)},
%0_{M}^{*_D(D+\Delta,\ba^t)}=
\end{align*}
where the third equality follows from \cite[Lemma 3.3]{Hara}. 
\end{rem}

%%%%%%%%%%%%%%%%%%%%%%%%%%%%%%%%%%%%%%%%%%%%%
\subsection{Absolute integral closure}
We recall the definition of absolute integral closure and a key result related to it that will be needed later. 
We refer the reader to \cite{Hu} for an overview of the theory of absolute integral closure.

Let $R$ be a normal domain with fractional field $K$ and $D$ be a prime divisor on $\Spec R$ with defining ideal $I_D=R(-D)$. 
We fix an algebraic closure $\overline{K}$ of $K$. 
The integral closure of $R$ in $\overline{K}$, denoted by $R^+$, is called an \textit{absolute integral closure} of $R$. 
Note that $R^+$ is independent, up to isomorphism, of the choice of $\overline{K}$.  
$(R/I_D)^+$ is defined in a similar way. 
We fix a prime ideal of $R^+$ lying over $I_D$ and write $I_D^+$ for it. 
This is an abuse of notation since $I_D^+$ is not uniquely determined by $D$. 
We have an isomorphism $R^+/I_D^+ \cong (R/I_D)^+$ (see \cite[p. 27]{Hoc07} for the proof). 

Assume that $R$ has a canonical module $\omega_R$, and let $S$ be a module-finite extension of a normal domain $R$ with $S$ normal and $\pi:\Spec S \to \Spec R$ denote a finite surjective morphism induced by the inclusion $R \hookrightarrow S$. 
The trace map $\mathrm{Tr}_{\pi}$ of $\pi$ is the map
\[
\omega_S \cong \Hom_R(S, \omega_R) \to \Hom_R(R, \omega_R)=\omega_R
\]
 induced by the inclusion $R \hookrightarrow S$. 
 
We use the following proposition in Section \ref{test submodule section}. 
\begin{prop}[cf.~\textup{\cite[Proposition 6.4]{MSTWW}}]\label{completion inclusion}
Suppose that $(R,\m)$ be an $F$-finite complete normal local ring of characteristic $p>0$. 
Let $D$ be a prime divisor and $\Delta$ be an effective $\Q$-Weil divisor on $X:=\Spec R$ such that $K_X+D+\Delta$ is $\Q$-Cartier and no component of $\Delta$ is equal to $D$. 
We fix a choice of $I_{D}^+$. 
For every module-finite extension $S$ of $R$ contained in $R^+$ with $S$ normal, 
the trace map $\mathrm{Tr}_{\pi}$ induces a map 
\[\pi_*\omega_S(D_S-\lfloor \pi^*(K_X+D+\Delta) \rfloor) \to R\]
and the ideal $\tau_D(R, D+\Delta)$ is contained in its image,  
where $\pi:\Spec S \to \Spec R$ is a finite surjective morphism induced by the inclusion $R \hookrightarrow S$ and $D_S$ is the prime divisor on $\Spec S$ such that $I_D^+ \cap S=S(-D_S)$. 
\end{prop}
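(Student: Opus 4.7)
The plan is to construct the map via Grothendieck duality together with a $\Q$-Cartier twist, and then to deduce the containment by showing that the image of the constructed map is an ideal satisfying the two defining properties of $\tau_D(R, D+\Delta)$, so that the containment follows from the minimality characterization.

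\textbf{Construction of the map.} I would begin with the canonical trace $\Tr_\pi\colon \pi_\ast\omega_S\to\omega_R$ supplied by Grothendieck duality for the finite surjective morphism $\pi$. Because $D_S$ is reduced and lies over the reduced divisor $D$ (via the chosen prime $I_D^+\cap S$), this extends to a logarithmic trace
\[\pi_\ast\omega_S(D_S)\longrightarrow\omega_R(D)\]
allowing simple poles. Writing $B:=K_X+D+\Delta$, the $\Q$-Cartier hypothesis then allows us to twist by $-B$ (choose $n$ with $nB$ Cartier, apply the projection formula, and round down on the source divisor), producing
\[\pi_\ast\omega_S(D_S-\lfloor\pi^\ast B\rfloor)\longrightarrow \omega_R(D-B)=\sO_X(-\Delta)\hookrightarrow R.\]

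\textbf{Containment via stability and minimality.} Let $I\subseteq R$ denote the image. It suffices to show that (a) $I\cap R^{\circ,D}\neq\emptyset$ and (b) $\phi(F^e_\ast I)\subseteq I$ for every $e\geq 0$ and every $\phi\in\Hom_R(F^e_\ast R(\lceil(p^e-1)(D+\Delta)\rceil), R)$. Property (a) holds because $I$ is generically supported outside $\Supp D\cup\Supp\Delta$, where the trace restricts to the ordinary field trace of a generically étale extension and produces units. For property (b), given $\phi$ and $x=\Tr_\pi(s)\in I$, I would form the normalization $T\subseteq R^+$ of the compositum $S\cdot R^{1/p^e}$, which is again module-finite and normal over $R$; under Grothendieck duality for the Frobenius $F^e$, $\phi$ corresponds to a section of $F^e_\ast\omega_R(D-\lfloor p^eB\rfloor)$. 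The compatibility of traces in the two towers $R\subseteq R^{1/p^e}\subseteq T$ and $R\subseteq S\subseteq T$ then lets one construct an explicit $s'\in\pi_\ast\omega_S(D_S-\lfloor\pi^\ast B\rfloor)$ with $\Tr_\pi(s')=\phi(F^e_\ast x)$, placing $\phi(F^e_\ast x)$ in $I$.

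\textbf{Main obstacle.} The delicate piece is verifying property (b): it requires a diagram chase combining Grothendieck duality for Frobenius with the log trace along $D_S$ and $D$, together with careful bookkeeping of the floor function under cycle-theoretic pullback through the tower $R\subseteq S\subseteq T$. The key inequality $g^\natural\lfloor\pi^\ast B\rfloor\leq \lfloor(\pi\circ g)^\ast B\rfloor$ (for the finite $g\colon\Spec T\to\Spec S$ above $\pi$) ensures that the twisted trace from $T$ factors through the twisted trace from $S$, providing the algebraic compatibility needed to transport the $\phi$-action from the $T$-side to the $S$-side.
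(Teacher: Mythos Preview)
Your overall strategy—verifying that the image $I$ satisfies the two defining conditions of $\tau_D(R,D+\Delta)$ and invoking minimality—is different from the paper's route. The paper instead reduces to the case $f^{1/r}\in S$, cites \cite[Proposition 6.4]{MSTWW} to get $\tau_D(R,D+\Delta)\subseteq\tau_{+,D}(R,D+\Delta)$, and then identifies both $\tau_{+,D}$ and $I$ as annihilators of kernels of maps out of $H^d_\m(\omega_R)$ via Matlis duality, so that the containment of kernels gives the containment of ideals. Your approach is closer in spirit to the paper's later Proposition \ref{parameter test=image of trace}(1), which does verify conditions (a) and (b) directly (in the parameter-test-submodule formulation).

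There is, however, a genuine gap in your verification of (a). The condition $I\cap R^{\circ,D}\neq\emptyset$ means $I\not\subseteq I_D$, which is equivalent to the localized trace map surjecting at the \emph{generic point of $D$}, not merely at the generic point of $X$. Your argument (``the trace restricts to the ordinary field trace of a generically \'etale extension and produces units'') only addresses the generic point of $X$ and therefore only shows $I\neq 0$. At the generic point of $D$ the extension $R_{I_D}\to S\otimes_R R_{I_D}$ is a possibly ramified (even wildly ramified or inseparable) extension of a DVR, so the field-trace heuristic fails. What is actually needed is that the logarithmic trace $\omega_{S'}(D_{S'})\to\omega_{R'}(D')$ surjects for any finite extension $R'\hookrightarrow S'$ of DVRs (with $D',D_{S'}$ the closed points); by Matlis duality this amounts to the injectivity of $H^1_\m(I_{D'})\to H^1_\m(I_{D_{S'}})$, which one can check directly but is not automatic. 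Indeed, in the paper's own proof of Proposition \ref{parameter test=image of trace}(1), exactly this point is handled by localizing at the generic point of $D$, completing, and then \emph{invoking the present Proposition \ref{completion inclusion}}—so using your approach to prove Proposition \ref{completion inclusion} would be circular unless you supply an independent argument for this DVR surjectivity.

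Your treatment of (b) is essentially correct, though the detour through $T=\overline{S\cdot R^{1/p^e}}$ is more elaborate than necessary: since Frobenius commutes with $\pi$, one can use $F^e$ on $S$ itself and the commutative square relating $\Tr_\pi$ and $\Tr_{F^e}$, as the paper does in the proof of Proposition \ref{parameter test=image of trace}(1).
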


\begin{proof}
The image of $\mathrm{Tr}_{\pi}:\pi_*\omega_S(D_S-\lfloor \pi^*(K_X+D+\Delta) \rfloor) \to R$ is independent of the choice of $K_X$, so we may assume that $K_X$ is effective. 
By assumption, $r(K_X+D+\Delta)=\Div_R(f)$ for some integer $r \ge 1$ and a  nonzero element $f \in R$. 
Let $S \hookrightarrow T$ be a module-finite extension of $S$ contained in $R^+$ such that $T$ is a normal domain and $f^{1/r} \in T$, and let $\psi: \Spec T \to \Spec R$ denote the morphism corresponding to the inclusion $R \hookrightarrow T$. 
Since the image of 
\[\mathrm{Tr}_{\psi}: \psi_*\omega_T(D_T - \lfloor \psi^*(K_X+D+\Delta)  \rfloor) \to R,\]
where $D_T$ is the prime divisor on $\Spec T$ such that $I_D^+ \cap T=T(-D_T)$, is contained in 
\[\mathcal{I}_{S, D}(R, D+\Delta):=\mathrm{Im}\left(\mathrm{Tr}_{\pi}: \pi_*\omega_S(D_S - \lfloor \pi^*(K_X+D+\Delta) \rfloor) \to R\right),\]
we may assume that $f^{1/r} \in S$. 
Observing that 
\[\pi^*K_X-\Div_S(f^{1/r})=\pi^*K_X-\pi^*(K_X+D+\Delta) \le -D_S,\] 
we see that the multiplication by $f^{1/r}$ induces an $R$-module homomorphism $\omega_R=\mathcal{O}_X(K_X) \xrightarrow{\cdot f^{1/r}} S(-D_S)$. 
It then follows from \cite[Proposition 6.4]{MSTWW} that $\tau_D(R, D+\Delta)$ is contained in the BCM adjoint ideal $\tau_{+, D}(R, D+\Delta),$\footnote{The ideal $\tau_{+, D}(R, D+\Delta)$ is denoted by $\adj^D_{R^+ \to (R/I_D)^+}(R, D+\Delta)$ in the notation of \cite{MSTWW}.} which is defined as 
\[
\tau_{+, D}(R, D+\Delta)=\Ann_R \ker \left(H^d_{\m}(\omega_R) \xrightarrow{\cdot f^{1/r}} H^d_{\m}(S(-D_S)) \to H^d_{\m}(I_D^+)\right).
\]
Note that by \cite[Lemma 2.5 (b)]{MSTWW}, the formation of $\tau_{+,D}(R, D+\Delta)$ is independent of the choice of $K_X$. 
Therefore, it suffices to show that $\tau_{+, D}(R, D+\Delta) \subseteq \mathcal{I}_{S, D}(R, D+\Delta)$. 
However, this is immediate because 
\[\mathcal{I}_{S, D}(R, D+\Delta)=\Ann_R \ker \left(H^d_{\m}(\omega_R) \xrightarrow{\cdot f^{1/r}} H^d_{\m}(S(-D_S))\right)
\]
by Matlis duality. 
\end{proof}

%%%%%%%%%%%%%%%%%%%%%%%%%%%%%%%%%%%%%%%%%%%%%
\subsection{Adjoint ideal sheaves}
In this subsection, we define multiplier ideal sheaves and adjoint ideal sheaves in the non-$\Q$-Gorenstein setting. 
Our main reference is \cite{dFH}, and we use the notation in \cite{KM} and \cite{Laz} freely. 

Let $X$ be a normal quasi-projective variety over an algebraically closed field $k$ of characteristic zero, $D$ be a reduced divisor on $X$ and $\Delta$ be an effective $\R$-Weil divisor on $X$ that has no common components with $D$. 
Let $t \ge 0$ be a real number and $\ba \subseteq \sO_X$ be a coherent ideal sheaf such that no components of $D$ are contained in the zero locus of $\ba$. 

\begin{defn}\label{adjoint def}
\begin{enumerate}
\item Suppose that $K_X+D+\Delta$ is $\R$-Cartier, and take a log resolution $\pi:\widetilde{X} \to X$ of $(X, D+\Delta, \ba)$ such that $\ba \sO_{\widetilde{X}}= \sO_{\widetilde{X}}(-F)$ for an effective divisor $F$ on $\widetilde{X}$ and the strict transform $\pi^{-1}_*D$ of $D$ is smooth (but possibly disconnected). 
Then the \textit{adjoint ideal sheaf} $\mathrm{adj}_D(X, D+\Delta, \ba^t)$ of the triple $(X, D+\Delta, \ba^t)$ along $D$ is defined as 
\[
\mathrm{adj}_D(X, D+\Delta, \ba^t)=\pi_*\sO_{\widetilde{X}}(K_{\widetilde{X}}-\lfloor \pi^*(K_X+D+\Delta)+t F \rfloor+\pi^{-1}_* D). 
\]
The definition is independent of the choice of $\pi$. 
When $\ba=\sO_X$, the ideal sheaf $\mathrm{adj}_D(X, D+\Delta, \ba^t)$ is simply denoted by $\mathrm{adj}_D(X, D+\Delta)$. 
 
\item 
If $K_X+D+\Delta$ is not $\R$-Cartier, then 
the \textit{adjoint ideal sheaf} $\mathrm{adj}_D(X, D+\Delta, \ba^t)$ of the triple $(X, D+\Delta, \ba^t)$ along $D$ is defined as 
\[
\mathrm{adj}_D(X, D+\Delta, \ba^t)=\sum_{\Delta'} \mathrm{adj}_D(X, D+\Delta+\Delta', \ba^t),
\]
where $\Delta'$ runs through all effective $\R$-Weil divisors on $X$ such that $D$ and $\Delta'$ have no common components and $K_X+D+\Delta+\Delta'$ is $\R$-Cartier. 
When $D=0$, it is denoted by $\J(X, \Delta, \ba^t)$ and called the \textit{multiplier ideal sheaf} of the triple $(X, \Delta, \ba^t)$. 
When $\ba=\sO_X$, the ideal sheaf $\mathrm{adj}_D(X, D+\Delta, \ba^t)$ (resp.~$\J(X, \Delta, \ba^t)$) is simply denoted by $\mathrm{adj}_D(X, D+\Delta)$ (resp.~$\J(X, \Delta)$). 

\item 
The pair $(X, \Delta)$ is said to be of \textit{klt type} (resp.~\textit{lc type}) if there exists an effective $\R$-Weil divisor $\Delta'$ on $X$ such that $K_X+\Delta+\Delta'$ is $\R$-Cartier and the pair $(X, \Delta+\Delta')$ is klt (resp. lc).
\end{enumerate}
\end{defn}

\begin{rem}
$(X, \Delta)$ is of klt type if and only if $\J(X, \Delta)=\sO_X$. The pair $(X, D+\Delta)$ is of plt type along $D$ if and only if $\mathrm{adj}_D(X, D+\Delta)=\sO_X$ (see the paragraph following Theorem \ref{Zhuang thm} for the definition of being of plt type). 
\end{rem}

%The $\R$-Cartier condition in Definition \ref{adjoint def} (2) can be replaced by the $\Q$-Cartier condition. 

\begin{prop}\label{single divisor}
There exists an effective $\R$-Weil divisor $\Gamma$ on $X$ such that $D$ and $\Gamma$ have no common components, $K_X+D+\Delta+\Gamma$ is $\Q$-Cartier and  
\[
 \mathrm{adj}_D(X, D+\Delta, \ba^t)=\mathrm{adj}_D(X, D+\Delta+\Gamma, \ba^t).
 \]
 In particular, if $(X, D+\Delta)$ is of plt type along $D$, then 
 $(X, D+\Delta+\Gamma)$ is plt along $D$. 
\end{prop}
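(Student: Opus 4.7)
The plan is to combine Noetherian stabilization of the sum defining $\adj_D(X,D+\Delta,\ba^t)$ with a general-section construction in the spirit of de Fernex--Hacon~\cite{dFH}.

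Since $X$ is Noetherian and $\adj_D(X,D+\Delta,\ba^t)$ is by definition a sum of coherent ideal sheaves, the ascending chain condition furnishes finitely many admissible $\Q$-Weil divisors $\Delta'_1,\ldots,\Delta'_n$ (each effective, sharing no component with $D$, and with $K_X+D+\Delta+\Delta'_i$ $\Q$-Cartier) such that
\[
\adj_D(X,D+\Delta,\ba^t)=\sum_{i=1}^n\adj_D(X,D+\Delta+\Delta'_i,\ba^t).
\]
It then suffices to produce a single admissible $\Gamma$ with $\adj_D(X,D+\Delta+\Gamma,\ba^t)\supseteq\adj_D(X,D+\Delta+\Delta'_i,\ba^t)$ for every $i$.

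Since the assertion is local, I would reduce to the case that $X$ is affine with trivial Picard group and choose a sufficiently divisible $r>0$ so that each $r(K_X+D+\Delta+\Delta'_i)$ is the divisor of a rational function $f_i\in K(X)^\times$, viewed as a section of the reflexive sheaf $\sO_X(-r(K_X+D+\Delta))$. Forming a sufficiently general $\C$-linear combination $s:=\sum_i\lambda_if_i$ and setting
\[
\Gamma:=\tfrac{1}{r}\bigl(\Div(s)-r(K_X+D+\Delta)\bigr),
\]
we have $K_X+D+\Delta+\Gamma=\tfrac{1}{r}\Div(s)$ principal (so $\Q$-Cartier), $\Gamma\geq 0$ by the section inequality, and for generic $\lambda_i$ the $D$-coefficient of $\Div(s)$ equals $r$, so $\Gamma$ and $D$ share no component.

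The main obstacle is checking the ideal inclusion $\adj_D(X,D+\Delta+\Gamma,\ba^t)\supseteq\adj_D(X,D+\Delta+\Delta'_i,\ba^t)$ for each $i$. On a common log resolution $\pi\colon\widetilde{X}\to X$ of $(X,D+\Delta+\sum_i\Delta'_i+\Gamma,\ba)$, this reduces to the component-wise floor inequality
\[
\bigl\lfloor\tfrac{1}{r}\Div_{\widetilde{X}}(s)+tF\bigr\rfloor\leq\bigl\lfloor\pi^*(K_X+D+\Delta+\Delta'_i)+tF\bigr\rfloor
\]
on $\widetilde{X}$. For a sufficiently general choice of the $\lambda_i$ no cancellation occurs at any of the finitely many relevant prime divisors $E$ of $\widetilde{X}$, whence
\[
\ord_E\Div_{\widetilde{X}}(s)=\min_j\ord_E\Div_{\widetilde{X}}(f_j)=r\min_j\ord_E\pi^*(K_X+D+\Delta+\Delta'_j)\leq r\,\ord_E\pi^*(K_X+D+\Delta+\Delta'_i),
\]
and the desired inequality follows upon dividing by $r$ and taking floors. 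The remaining subtlety is the passage from the local construction back to a global $\Gamma$ on $X$, handled by a standard patching/twisting argument using an auxiliary ample divisor.
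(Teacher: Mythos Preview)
Your overall strategy---reduce to finitely many boundaries by Noetherianity, then dominate them simultaneously by a general linear combination---is sound and is essentially the mechanism behind \cite[Proposition~5.4]{dFH}. The paper organizes things differently: it introduces auxiliary ideals $\adj_D^{(m)}(X,D+\Delta,\ba^t)$ by resolving the sheaf $\sO_X(-m(K_X+D+\Delta))$, shows directly that these dominate every $\adj_D(X,D+\Delta+\Delta',\ba^t)$ with $m(K_X+D+\Delta+\Delta')$ Cartier, and then invokes \cite{dFH} to realize the maximal one by a single $\Gamma$. That packaging has the advantage that the Bertini step and the choice of resolution are absorbed into the cited reference.

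Your argument as written, however, has a genuine circularity: you take $\pi$ to be a log resolution of a configuration that already includes $\Gamma$, and then choose the $\lambda_i$ defining $\Gamma$ to be general with respect to the prime divisors on that $\widetilde{X}$---but those divisors depend on $\Gamma$. The fix is to choose the resolution first: take $\pi_0$ a log resolution of $(X, D+\Delta+\sum_i\Delta'_i,\ba)$ together with the base ideal of the linear system $\langle f_1,\ldots,f_n\rangle$, and only then choose $\lambda$ general relative to the fixed SNC divisor on $\widetilde{X}_0$. Bertini then gives $\Div_{\widetilde{X}_0}(s)=G+H$ with $G$ supported on the SNC divisor and $H$ smooth and transversal, so $\pi_0$ is already a log resolution for $(X,D+\Delta+\Gamma,\ba)$. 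Note too that your equality $\ord_E s=\min_j\ord_E f_j$ actually fails along the components of $H$ (there $\ord_E s=1>0=\ord_E f_j$); this is harmless because $r\ge 2$ forces $\lfloor 1/r\rfloor=0$, but it should be said. Finally, there is no ``patching'' of locally constructed $\Gamma$'s---they will not agree on overlaps. The local-to-global step is really a global ample twist: replace $\sO_X(-r(K_X+D+\Delta))$ by $\sO_X(-r(K_X+D+\Delta))\otimes\sO_X(H)$ for $H$ ample and Cartier so that one may work with global sections throughout, as in \cite{dFH}.
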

\begin{proof}
In Definition \ref{adjoint def} (2), $\Delta'$ runs through all effective $\R$-Weil divisors on $X$ such that $D$ and $\Delta'$ have no common components and $K_X+D+\Delta+\Delta'$ is $\R$-Cartier. 
First, we will show that it suffices to consider such $\Delta'$ with $K_X+D+\Delta+\Delta'$ $\Q$-Cartier. 
Suppose $\Delta'$ is an $\R$-Weil divisor on $X$ such that $D$ and $\Delta'$ have no common components and $K_X+D+\Delta+\Delta'$ is $\R$-Cartier. 
Write $K_X+D+\Delta+\Delta'=\sum_{i= 1}^{n} a_iC_i$, where $n \ge 1$ is an integer, $C_i$ is a Cartier divisor on $X$ for each $i=1,\dots,n$ and $a_1,\dots,a_n$ is a sequence of real numbers linearly independent over $\Q$. 
Since $\sum_{i=1}^n a_i \ord_D(C_i)=0$, we have $\ord_D(C_i)=0$, that is, each $C_i$ has no common components with $D$. 
Take an effective Cartier divisor $A$ on $X$ such that $A$ has no common components with $D$ and $A+C_i$ is effective for all $i$. 
Let $B=(n+1)A+\sum_{i=1}^{n}C_i$, and choose a sufficiently small real number $\epsilon>0$ so that
\[
    \adj_D(X,D+\Delta+\Delta'+\epsilon B,\ba^t)
    =\adj_D(X,D+\Delta+\Delta',\ba^t).
\]
Pick a $\Q$-Cartier $\Q$-Weil divisor $\Theta$ on $X$ such that 
\[K_X+D+\Delta+\Delta'\le \Theta\le K_X+D+\Delta+\Delta'+\epsilon B,\]
and let $\Delta'':=\Theta-(K_X+D+\Delta)$. 
Then $\Delta''$ has no common components with $D$, $K_X+D+\Delta+\Delta''$ is $\Q$-Cartier and $\adj_D(X,D+\Delta+\Delta'',\ba^t)=\adj_D(X,D+\Delta+\Delta',\ba^t)$. 
Thus, we may assume that $K_X+D+\Delta+\Delta'$ is $\Q$-Cartier in Definition \ref{adjoint def} (2). 

Next, we reduce to the case where $\Delta$ is a $\Q$-Weil divisor. 
By the argument above, there exist finitely many $\R$-Weil divisors $\Delta'_1, \dots, \Delta'_r$ on $X$ such that $D$ and $\Delta'_i$ have no common components, $K_X+D+\Delta+\Delta'_i$ is $\Q$-Cartier for all $i=1, \dots, r$ and 
\[
\adj_D(X,D+\Delta, ,\ba^t)=\sum_{i=1}^r \adj_D(X,D+\Delta+\Delta'_i,\ba^t).
\]
Let $\widetilde{\Delta}$ be an $\Q$-Weil divisor on $X$ such that $\Delta+\Delta'_i \ge \widetilde{\Delta} \ge \Delta$ for all $i=1, \dots, r$. 
Then 
\[
\sum_{i=1}^r \adj_D(X,D+\Delta+\Delta'_i,\ba^t) \subseteq \adj_D(X,D+\widetilde{\Delta},\ba^t) \subseteq \adj_D(X,D+\Delta,\ba^t), 
\]
which implies that $\adj_D(X,D+\widetilde{\Delta},\ba^t)=\adj_D(X,D+\Delta,\ba^t)$. 
Therefore, replacing $\Delta$ with $\widetilde{\Delta}$, we may assume that $\Delta$ is a $\Q$-Weil divisor. 

For every integer $m \ge 2$ such that $m(K_X+D+\Delta)$ is an integral Weil divisor, take a log resolution $\pi:Y \to X$ of $(X, \sO_X(-m(K_X+D+\Delta)) \ba)$ such that $\ba \sO_Y=\sO_Y(-F)$ and $\sO_X(-m(K_X+D+\Delta))\sO_Y=\sO_Y(-G_m)$ for effective divisors $F$ and $G_m$ on $Y$. 
Then we define an ideal sheaf $\mathrm{adj}_D^{(m)}(X, D+\Delta, \ba^t)$ as 
\[
\mathrm{adj}_D^{(m)}(X, D+\Delta, \ba^t)=\sO_Y\left(K_Y- \left\lfloor \frac{G_m}{m}+tF \right\rfloor +\pi^{-1}_*D\right). 
\]
For every effective $\Q$-Weil divisor $\Delta'$ on $X$ such that $D$ and $\Delta'$ have no common components and $m(K_X+D+\Delta+\Delta')$ is Cartier, we have $G_m \le m \pi^*(K_X+D+\Delta+\Delta')$ and consequently, 
\[\mathrm{adj}_D(X, D+\Delta+\Delta', \ba^t) \subseteq \mathrm{adj}_D^{(m)}(X, D+\Delta, \ba^t).\]
The family $\{\mathrm{adj}_D^{(m)}(X, D+\Delta, \ba^t)\}_m$ of ideal sheaves has a unique maximal element, which is denoted by $\mathrm{adj}'_D(X, D+\Delta, \ba^t)$. 
By the above observation, we have an inclusion $\mathrm{adj}_D(X, D+\Delta, \ba^t) \subseteq \mathrm{adj}'_D(X, D+\Delta, \ba^t)$. 
On the other hand, it follows from an argument similar to the proof of \cite[Proposition 5.4]{dFH} that there exists an effective $\Q$-Weil divisor $\Gamma$ on $X$ such that $D$ and $\Gamma$ have no common components, $K_X+D+\Delta+\Gamma$ is $\Q$-Cartier and  
\[
\mathrm{adj}'_D(X, D+\Delta, \ba^t)=\mathrm{adj}_D(X, D+\Delta+\Gamma, \ba^t) \subseteq \mathrm{adj}_D(X, D+\Delta, \ba^t). 
\]
Thus, we conclude that $\mathrm{adj}_D(X, D+\Delta, \ba^t)=\mathrm{adj}_D(X, D+\Delta+\Gamma, \ba^t)$. 
\end{proof}

\begin{rem}
Unlike Proposition \ref{single divisor}, when $(X, \Delta)$ is of lc type, there does not generally exist an effective $\R$-Weil divisor $\Gamma$ on $X$ such that $K_X+\Delta+\Gamma$ is $\Q$-Cartier and $(X, \Delta+\Gamma)$ is lc. 
\end{rem}

%%%%%%%%%%%%%%%%%%%%%%%%%%%%%%%%%%%%%%%%%%%%%
\subsection{Ultraproducts}
%%%%%%%%%%%%%%%%%%%%%%%%%%%%%%%%%%%%%%%%%%%%%%%%%%%%%%%%%%%%%%%%%%%%%%%%%%%%%%
In this subsection, we quickly review the theory of ultraproducts in commutative algebra. We refer the reader to \cite{Sch03} and \cite[Section 3]{Yam} for details.
Throughout this subsection, $\mathcal{P}$ denotes the set of prime numbers.
\begin{defn}
	A non-empty subset $\mathcal{F}$ of the power set of $\mathcal{P}$ is said to be a {\it non-principal ultrafilter} if the following four conditions hold.
	\begin{enumerate}
		\item If $A, B \in \mathcal{F}$, then $A\cap B\in \mathcal{F}$.
		\item If $A\in \mathcal{F}$ and $A\subseteq B \subseteq \mathcal{P}$, then $B\in \mathcal{F}$.
		\item For any $A\subseteq \mathcal{P}$, we have $A\in \mathcal{F}$ or $\mathcal{P}\setminus A \in \mathcal{F}$.
		\item For any finite subset $A\subseteq \mathcal{P}$, we have $A\notin \mathcal{F}$.
	\end{enumerate}
\end{defn}

From now on, we fix a non-principal ultrafilter $\mathcal{F}$ on $\mathcal{P}$. 

\begin{defn}
Let $\phi$ be a property on $\mathcal{P}$. 
We say that $\phi(p)$ holds {\it for almost all $p$} if the set $\{p\in\mathcal{P}\mid\text{$\phi(p)$ holds}\}$ belongs to $\mathcal{F}$.
\end{defn}
\begin{defn}
Let $(A_p)_{p \in \mathcal{P}}$ be a family of non-empty sets indexed by $\mathcal{P}$. 
Its {\it ultraproduct} is defined as
\[
A_\infty =\ulim_p A_p :=\prod_{p}A_p/\sim,
\]
where $(a_p)_{p \in \mathcal{P}} \sim (b_p)_{p \in \mathcal{P}}$ if and only if  $a_p=b_p$ for almost all $p$.
If $(A_p)_{p \in \mathcal{P}}$ is a family of rings (resp.~fields), then $A_\infty$ is a ring (resp.~a field). 
\end{defn}

Let $\overline{\F_p}$ be an algebraic closure of the prime field $\F_p$ of characteristic $p>0$ and then there exists a non-canonical isomorphism 
\[
\C \cong \ulim_p \overline{\F_p}
\]
of fields (see, for example, \cite[Theorem 2.4]{Sch03}). We fix this isomorphism, and let $R$ be a local ring essentially of finite type over $\C$. 
An \textit{approximation} of $R$ and the \textit{non-standard hull} of $R$ are constructed via the fixed isomorphism (see \cite[Subsection 4.3]{Sch03} and \cite[Definition 3.35]{Yam}). 
We omit formal definitions here and provide instead a brief overview of their fundamental properties.
 \begin{enumerate}
 \item An approximation of $R$ is a family $(R_p)_{p \in \mathcal{P}}$ of rings $R_p$ essentially of finite type over $\bar{\F_p}$ indexed by $\mathcal{P}$ and $R_p$ is a local ring for almost all $p$. If $R$ is normal (resp.~regular), then so is $R_p$ for almost all $p$ (see \cite[Theorem 4.6]{Sch03} and \cite[Proposition 3.9]{Yam22a}).
\item An approximation of an element $x \in R$ is a sequence $(x_p)_{p \in \mathcal{P}}$ of elements $x_p \in R_p$ indexed by $\mathcal{P}$. 
 \item The non-standard hull $R_\infty$ is the ultraproduct of an approximation $(R_p)_{p \in \mathcal{P}}$ of $R$. 
 Once we have fixed the ultrafilter $\mathcal{F}$ and the isomorphism $\C \cong \ulim_p \overline{\F_p}$, it becomes independent of the choice of approximations. 
 We also remark that $R_\infty$ is not Noetherian in general. 
 \item (\cite[Corollary 4.3]{Sch03}) The map $R \to  R_\infty$ sending $x$ to the image of $(x_p)$ is a faithfully flat injection. 
 \end{enumerate}

\begin{eg}
%Let $r$ and $n$ be positive integers, and 
Let $f_1,\dots,f_r\in (x_1,\dots,x_n)\Z[x_1,\dots,x_n]$ be polynomials in the variables $x_1, \dots, x_n$ with integer coefficients.
Set  
\[
R=\left(\C[x_1,\dots,x_n]/(f_1,\dots,f_r)\right)_{(x_1,\dots,x_n)}.
\] 
For each $p \in \mathcal{P}$, let $R_p$ denote the base change of its reduction modulo $p$ to an algebraic closure $\overline{\F_p}$ of the prime field $\F_p$, that is, 
\[R_p=(\overline{\F_p}[x_1,\dots,x_n]/(\overline{f_1},\dots,\overline{f_r}))_{(x_1,\dots,x_n)},\] where $\overline{f_1}, \dots, \overline{f_r}$ are the image of $f_1, \dots, f_r$ in $\overline{\F_p}[x_1,\dots,x_n]$.
%Suppose that $R=(\C[x_1,\dots,x_n]/(f_1,\dots,f_r))_{(x_1,\dots,x_n)}$ and $R_p=(\bar{\F_p}[x_1,\dots,x_n]/(f_1,\dots,f_r))_{(x_1,\dots,x_n)}$ for any $p\in \mathcal{P}$. 
Then the family $(R_p)_{p\in \mathcal{P}}$ is an approximation of $R$.
\end{eg}

If $\phi:R\to R'$ is a local $\C$-algebra homomorphism between local rings essentially of finite type over $\C$, then we can define an approximation $(\phi_p:R_p\to R'_p)_{p\in \mathcal{P}}$ of $\phi$ as in \cite[3.2.4]{Sch03} and \cite[Definition 3.40]{Yam}, which is a family of local $\bar{\F_p}$-algebra homomorphisms such that the following diagram commutes:
\[
\xymatrix{
R \ar[rr]^{\phi} \ar[d] & & R' \ar[d] \\
R_\infty \ar[rr]^-{\ulim_p \phi_p} & & R'_\infty
	}.
\]

Let $S$ be a finitely generated $R$-algebra. An \textit{$R$-approximation} of $S$ and the (relative) \textit{$R$-hull} of $S$ are constructed similarly. 
The reader is referred to \cite[Section 2]{Sch08} and \cite[Subsection 3.3]{Yam} for the details. 
Here is a summary of some basic properties of the $R$-hull.
\begin{enumerate}
\item An $R$-approximation of $S$ is a family $(S_p)_{p \in \mathcal{P}}$ of finitely generated $R_p$-algebras $S_p$ indexed by $\mathcal{P}$. 
\item The relative $R$-hull $S_\infty$ of $S$ is the ultraproduct of an $R$-approximation $(S_p)_{p \in \mathcal{P}}$. 
It is independent of the choice of approximations. 
\item We have a natural faithfully flat inclusion $S\hookrightarrow S_\infty$.
\end{enumerate}

The above construction can be generalized in several ways. 
Given an $S$-module $N$, an approximation 
of $N$ is defined as in \cite[Subsection 2.5]{Sch05}, \cite[Definition 3.41]{Yam} and \cite[Definition 3.60]{Yam}. 
If $R$ is normal and $\Delta$ is an $\R$-Weil divisor on $\Spec R$, then an approximation $(\Delta_p)_{p \in \mathcal{P}}$ of $\Delta$ is defined as in \cite[Definition 3.45]{Yam}, where $\Delta_p$ is an $\R$-Weil divisor on $\Spec R_p$ for almost all $p$. 
Moreover, if $\Delta$ is $\Q$-Cartier of index $r$, then so is $\Delta_p$ for almost all $p$. 
We can also define an approximation $(X_p)_{p \in \mathcal{P}}$ of an algebraic variety $X$ over $\C$, where  $X_p$ is an algebraic variety over $\overline{\F_p}$ for almost all $p$. 
The reader is referred to \cite{Sch05} and \cite[Subsection 3.3]{Yam} for the details.

Following Schoutens \cite[Section 5]{Sch08}, we can define local ultracohomology modules. 
Let $x_1,\dots,x_d$ be a system of parameters for $R$. Suppose that $M_p$ is an $R_p$-module for almost all $p$ and $M_\infty=\ulim_p M_p$. For an integer $n \ge 1$ and an $n$-tuple $1\le i_1<\dots<i_n\le d$, there exists a natural map
\[
(M_\infty)_{x_{i_1}\cdots x_{i_n}}\to \ulim_p (M_p)_{x_{i_1,p}\cdots x_{i_n,p}}.
\]
Considering the \v{C}ech complexes of $M_p$ and $M_{\infty}$, we have commutative diagrams
\[
\xymatrix{
	\bigoplus_{1\le i_1<\cdots<i_n \le d}(M_\infty)_{x_{i_1}\dots x_{i_n}} \ar[r] \ar[d]& \bigoplus_{1\le j_1<\cdots<j_{n+1}\le d} (M_\infty)_{x_{j_1}\dots x_{j_{n+1}}} \ar[d]\\
	\bigoplus_{1\le i_1<\cdots<i_n \le d}\ulim_p (M_p)_{x_{i_1,p}\dots x_{i_n,p}} \ar[r] & \bigoplus_{1\le j_1<\cdots<j_{n+1}\le d}\ulim_p (M_p)_{x_{j_1,p}\dots x_{j_{n+1},p}}, 
}
\]
which yield natural maps 
\[
H_{\m}^i(M_\infty)\to \ulim_p H_{\m_p}^i(M_p).
\]
We do not know whether this map is injective or not in general, and revisit this problem in Section 5.

\begin{lem}\label{lemma tensor ultraproduct}
Suppose that $S$ is a module-finite extension of $R$ contained in $R^+$. 
Let $(S_p)_{p \in \mathcal{P}}$ be an $R$-approximation of $S$, $(M_p)_{p \in \mathcal{P}}$ be a family of $S_p$-modules indexed by $\mathcal{P}$ and $N$ be a finite $S$-module.
Then 
\[
(\ulim_p M_p)\otimes_S N \cong \ulim_p (M_p\otimes_{S_p}N_p).
\]
\end{lem}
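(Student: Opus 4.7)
The plan is to reduce to the case of a finitely generated free module via a finite presentation of $N$, using the right exactness of tensor product together with the fact that ultraproducts commute with finite direct sums and preserve right exactness.

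First, since $N$ is a finite $S$-module, choose a finite presentation
\[
S^m \xrightarrow{A} S^n \to N \to 0,
\]
where $A$ is an $n\times m$ matrix with entries in $S$. Taking an approximation of each entry, we obtain matrices $A_p \in \mathrm{Mat}_{n\times m}(S_p)$ for almost all $p$, and by setting $N_p:=\mathrm{coker}(A_p: S_p^m\to S_p^n)$ we obtain an $R$-approximation of $N$ that is compatible with the presentation (this is how the $R$-hull of $N$ is constructed from the $R$-hull of $S$). Tensoring the presentation of $N_p$ with $M_p$ over $S_p$, we get a right exact sequence
\[
M_p^m \xrightarrow{A_p} M_p^n \to M_p\otimes_{S_p} N_p \to 0
\]
for almost all $p$.

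Now take the ultraproduct. Setting $M:=\ulim_p M_p$ and $S_\infty:=\ulim_p S_p$, finite direct sums commute with ultraproducts, so $\ulim_p M_p^k \cong M^k$ for $k=m,n$. Moreover, the ultraproduct of a family of right exact sequences is again right exact: since being a cokernel is expressible by a first-order formula in the language of $S_\infty$-modules, this follows from {\L}o\'s's theorem (alternatively, one checks directly at the level of tuples that surjectivity and the cokernel relation pass to the ultralimit). We therefore obtain an exact sequence of $S_\infty$-modules
\[
M^m \xrightarrow{\ulim_p A_p} M^n \to \ulim_p (M_p\otimes_{S_p} N_p) \to 0.
\]
By construction of approximations, the image of $A$ under the inclusion $S\hookrightarrow S_\infty$ is precisely $\ulim_p A_p$, so this sequence is also exact as a sequence of $S$-modules, with the left map given by multiplication by $A$. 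Comparing with the right exact sequence
\[
M^m \xrightarrow{A} M^n \to M\otimes_S N \to 0
\]
obtained by tensoring the original presentation with $M$ over $S$, the universal property of cokernels yields the desired canonical isomorphism $M\otimes_S N \cong \ulim_p (M_p\otimes_{S_p} N_p)$ of $S$-modules.

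The only delicate point is the exactness of the ultraproduct functor, that is, the commutation of cokernels with ultraproducts for modules over varying rings; this is handled by {\L}o\'s's theorem and is the main technical input needed. Note that no hypothesis on $S$ being contained in $R^+$ is used in the argument—this extra structure from the hypothesis is not needed here but is presumably convenient in the applications of the lemma later in the paper.
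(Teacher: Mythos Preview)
Your proof is correct and follows essentially the same route as the paper: take a finite presentation of $N$, approximate it, tensor with $M_p$, take ultraproducts, and compare cokernels. Your version is slightly more explicit about why ultraproducts preserve right exactness (invoking {\L}o\'s's theorem, where the paper simply asserts the resulting sequence is exact), and your observation that the hypothesis $S\subseteq R^+$ is not actually used in the argument is correct.
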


\begin{proof}
Take a finite presentation 
\[
	S^m \xrightarrow{A} S^n \to N \to 0
\]
of the $S$-module $N$, where $m$, $n$ are positive integers and $A$ is an $n \times m$ matrix with entries in the maximal ideal $\m$. 
Then we have an exact sequence
\[
S_p^m \xrightarrow{A_p} S_p^n \to N_p \to 0
\]
for almost all $p$, where $N_p$ and $A_p$ are approximations of $N$ and $A$, respectively. 
Tensoring with $M_p$ yields the exact sequence 
\[
M_p^m \xrightarrow{A_p} M_p^n \to M_p\otimes_{S_p}N_p \to 0
\]
for almost all $p$. 
Taking its ultraproduct, we have an exact sequence
\[
(\ulim_p M_p)^m \xrightarrow{A} (\ulim_p M_p)^n \to \ulim_p (M_p\otimes_{S_p}N_p) \to 0, 
\]
which induces the isomorphism 
	\[
	(\ulim_p M_p)\otimes_S N \cong \ulim_p (M_p\otimes_{S_p}N_p).
	\]
\end{proof}

We conclude this subsection by discussing properties inherited by approximations. 

\begin{prop}[{\cite[Theorem 4.7]{Sch03}}]\label{reduction modulo p injectivity}
Let $\iota: R \hookrightarrow S$ be an injective local $\C$-algebra homomorphism between local domains essentially of finite type over $\C$. 
If $(\iota_p:R_p \to S_p)_{p \in \mathcal{P}}$ is an approximation of $\iota$, then $\iota_p$ is injective for almost all $p$.
\end{prop}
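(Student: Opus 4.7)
The plan is to reduce the problem to injectivity of the ultraproduct map $\iota_\infty := \ulim_p \iota_p \colon R_\infty \to S_\infty$ and then invoke flat base change along the canonical faithfully flat maps provided by the non-standard hull construction. Throughout, the commutative square
\[
\xymatrix{
R \ar[r]^{\iota} \ar[d] & S \ar[d] \\
R_\infty \ar[r]^{\iota_\infty} & S_\infty
}
\]
with faithfully flat injective vertical arrows will be the central object.

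First, I would unpack the definition of the ultraproduct to identify $\ker \iota_\infty$ with $\ulim_p \ker \iota_p$ as a submodule of $R_\infty = \ulim_p R_p$: an element $\xi = \ulim_p \xi_p$ lies in $\ker \iota_\infty$ precisely when $\iota_p(\xi_p) = 0$ for almost all $p$. A direct argument with the ultrafilter then shows that $\ulim_p \ker \iota_p = 0$ in $R_\infty$ if and only if $\ker \iota_p = 0$ for almost all $p$ (otherwise one produces a nonzero ultraproduct element by selecting nonzero witnesses on a filter set). This reduces the proposition to showing that $\iota_\infty$ is injective.

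Next, I would establish injectivity of $\iota_\infty$. Since $R$ is Noetherian and $\iota$ is injective, $\ker \iota = 0$. Flat base change along the faithfully flat map $R \to R_\infty$ applied to the short exact sequence $0 \to R \xrightarrow{\iota} S$ yields injectivity of the induced map $R_\infty \to S \otimes_R R_\infty$. Functoriality of Schoutens' approximation construction produces a natural $R$-algebra map $S \otimes_R R_\infty \to S_\infty$ whose composition with $R_\infty \to S \otimes_R R_\infty$ recovers $\iota_\infty$. One then checks that this second map does not introduce a new kernel on the image of $R_\infty$, giving injectivity of $\iota_\infty$ and, by the first step, the proposition.

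The main obstacle will be controlling the map $S \otimes_R R_\infty \to S_\infty$. The non-standard hull $S_\infty$ is generally larger than $S \otimes_R R_\infty$, and a priori it might carry new relations that ruin injectivity on the image of $R_\infty$. Resolving this requires showing that any new relation in $S_\infty$ between images of elements of $R_\infty$ must descend to a relation in $S$ between the corresponding elements of $R$ --- which cannot exist because $\iota$ is injective. This compatibility is built into the functorial nature of the non-standard hull for local $\C$-algebra maps between local rings essentially of finite type over $\C$, and is essentially the technical heart of Schoutens' construction that one must invoke.
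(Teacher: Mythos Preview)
The paper gives no proof of this proposition; it is simply attributed to \cite[Theorem~4.7]{Sch03}. So the question is whether your argument stands on its own.

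Your first reduction is correct: injectivity of $\iota_\infty=\ulim_p\iota_p$ is equivalent, by the ultrafilter argument you sketch, to injectivity of $\iota_p$ for almost all $p$. The gap is in the second step. The composite $R_\infty \to S\otimes_R R_\infty \to S_\infty$ is $\iota_\infty$ by construction, so requiring the second arrow to be injective on the image of the first is \emph{literally} requiring $\iota_\infty$ to be injective --- you have restated the goal, not advanced toward it. Your final paragraph concedes this (``the main obstacle'') and then resolves it by appeal to ``the technical heart of Schoutens' construction,'' which is the cited theorem rather than an argument. Note also that an element of $R_\infty$ need not arise from any element of $R$, so there is no ``corresponding element of $R$'' to which a relation in $S_\infty$ could descend; faithful flatness of $R\to R_\infty$ descends submodules, not individual elements. (The factorization \emph{would} succeed if one knew $S\otimes_R R_\infty\cong S_\infty$, which holds when $S$ is module-finite over $R$ by the argument of Lemma~\ref{lemma tensor ultraproduct}, but no such hypothesis is available here.)

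A route that does work is finitary: fix polynomial presentations of $R$, $S$ and $\iota$ over $\C$ and use that, for domains, injectivity of $\iota$ is equivalent to dominance of $\Spec S\to\Spec R$, a constructible condition on the presentation data detectable by finitely many polynomial identities and non-vanishings over the ground field. Such first-order conditions transfer from $\C$ to almost all $\overline{\F_p}$ through the fixed isomorphism $\C\cong\ulim_p\overline{\F_p}$ and {\L}o{\'s}'s theorem; this is the engine behind Schoutens' transfer results, and the bare hull maps do not let you shortcut it.
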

\begin{rem}
Flatness is preserved under taking approximations (cf.~Proposition \ref{proposition redutions mod p>0 flat local}), but purity is not (see \cite{HJPS}).
\end{rem}

%%%%%%%%%%%%%%%%%%%%%%%%%%%%%%%%%%%%%%%%%%%%%%%%%%%%%%%%%%%%%%%%%%%%%%%%%%%%%%
\section{Test submodules along divisors}\label{test submodule section}

In this section, we develop the theory of test submodules along divisors. Throughout this section, we work with the following setting. 

\begin{setting}\label{setting test submodules}
Let $R$ be a $d$-dimensional $F$-finite normal domain of characteristic $p>0$, $D$ be a reduced divisor on $X:=\Spec R$ and $\Gamma$ be an effective $\Q$-Weil divisor on $X$ that has no common components with $D$. 
We assume that $F^!\omega_X^{\bullet} \cong \omega_X^{\bullet}$, where $F: X \to X$ is the Frobenius morphism and $\omega_X^{\bullet}$ is a  normalized dualizing complex for $X$. 
This condition is satisfied, for example, when $R$ is essentially of finite type over an $F$-finite local ring (see \cite[Example 2.15]{BST}). 
Note that $h^{-d}(\omega_X^{\bullet})$ is isomorphic to the canonical module $\omega_R$ of $R$. 
\end{setting}

\begin{defn}\label{parameter test defn}
Let the notation be as in Setting \ref{setting test submodules}. 
The \textit{parameter test submodule} $\tau_D(\omega_R, \Gamma)$ of the pair $(R, \Gamma)$ along $D$ is defined as the unique smallest submodule $M$ of $\omega_R(D)$ satisfying the following conditions:  
\begin{enumerate}[label=(\alph*)]
\item $M$ coincides with $\omega_R(D)$ at every generic point of $D$. 
\item For every integer $e \ge 0$ and every $\phi \in \Hom_R(F^e_*\omega_R(p^eD+\lceil (p^e-1)\Gamma \rceil), \omega_R(D)) \subseteq \Hom_R(F^e_*\omega_R(D), \omega_R(D))$, one has $\phi(F^e_*M) \subseteq M$. 
\end{enumerate}
\end{defn}

\begin{rem}\label{parameter submodule with ideal}
Given an ideal $\ba \subseteq R$ such that $\ba \cap R^{\circ, D} \ne \emptyset$ and a real number $t \ge 0$, we can define the parameter test submodule $\tau_D(\omega, \ba^t)$ of the pair $(R, \ba^t)$ along $D$ similarly. This is the unique smallest submodule $M$ of $\omega_R(D)$ satisfying the following conditions:
\begin{enumerate}[label=(\alph*)]
\item $M$ coincides with $\omega_R(D)$ at every generic point of $D$. 
\item[(b')] For every integer $e \ge 0$ and every $\phi \in \Hom_R(F^e_*\omega_R(p^eD), \omega_R(D))$,  which is viewed as an element of $\Hom_R(F^e_*\omega_R(D), \omega_R(D))$, one has $\phi(F^e_* \ba^{\lceil t(p^e-1) \rceil}M) \subseteq M$. 
\end{enumerate}
\end{rem}

\begin{lem}\label{test ideal interpretation}
With notation as in Setting \ref{setting test submodules}, choose a canonical divisor $K_X$ of $X:=\Spec R$ such that $-(K_X+D)$ is an effective Weil divisor $G$ with no common components with $D$, and fix $\omega_R=R(K_X)$ to be the corresponding fractional ideal of $R$. 
Then we have the equality 
\[
\tau_D(\omega_R, \Gamma)=\tau_D(R, D+\Gamma+G)
\]
of fractional ideals of $R$. In particular, $\tau_D(\omega_R, \Gamma)$ exists. 
\end{lem}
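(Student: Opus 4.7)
My plan is to prove the equality by identifying $\omega_R(D)$ with the fractional ideal $R(K_X+D) = R(-G) \subseteq R$ and then showing that, under this identification, the two characterizations produce the same smallest ideal of $R$. The key is to match the stability conditions through Grothendieck duality for the Frobenius, which is available thanks to $F^!\omega_X^\bullet \cong \omega_X^\bullet$.

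First I would compute, for each $e \ge 0$, the two Hom modules appearing in conditions (b). A direct divisor calculation using the cancellation $K_X + D + G = 0$ produces canonical isomorphisms
\[
\Hom_R(F^e_*\omega_R(p^eD + \lceil(p^e-1)\Gamma\rceil),\, \omega_R(D)) \;\cong\; F^e_*R(-\lceil(p^e-1)\Gamma\rceil) \;\cong\; \Hom_R(F^e_*R(\lceil(p^e-1)(D+\Gamma+G)\rceil),\, R),
\]
where the outer identifications arise from the standard formula $\Hom_R(F^e_*R(\Delta), R(E)) \cong F^e_*R(p^eE - (p^e-1)K_X - \Delta)$ supplied by $F^!\omega_X^\bullet \cong \omega_X^\bullet$. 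More importantly, both identifications factor through the common generic $p^{-e}$-linear map $F^e_*K \to K$, so for each $s$ in the middle module the associated maps $\psi_s$ (on the $\omega$-side) and $\varphi_s$ (on the $R$-side) restrict to the same $R$-linear map on the shared subdomain $F^e_*R(-G)$.

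With the matching in hand, I would show that the class of candidates for $\tau_D(\omega_R, \Gamma)$ and the class of candidates for $\tau_D(R, D+\Gamma+G)$ coincide as sets of $R$-submodules of $R(-G)$. A candidate $M$ on the $\omega$-side, viewed as an ideal of $R$, satisfies the test-ideal conditions: condition (a) is immediate from $M_{\p_i} = R(-G)_{\p_i} = R_{\p_i}$ at each minimal prime $\p_i$ of $I_D$ (using $v_{\p_i}(G) = 0$), and (b) transfers via the Hom matching. For a candidate $J$ on the $R$-side, set $J' := J \cap R(-G)$; then (b) holds because $\psi_s(F^e_*J') \subseteq R(-G) \cap \varphi_s(F^e_*J) \subseteq R(-G) \cap J = J'$, while for (a) one uses that $\tau_D$ commutes with localization together with $\tau_{D_{\p_i}}(R_{\p_i}, D_{\p_i}) = R_{\p_i}$ (since $R_{\p_i}$ is a DVR whose unique prime divisor is $D_{\p_i}$), forcing $J_{\p_i} = R_{\p_i}$ and hence $J'_{\p_i} = R_{\p_i} = R(-G)_{\p_i}$. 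Since the two classes of candidates coincide, so do their smallest elements, giving the equality and in particular the existence of $\tau_D(\omega_R, \Gamma)$.

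The main obstacle is the compatibility step in the Hom identification: checking that the two Grothendieck-duality isomorphisms with $F^e_*R(-\lceil(p^e-1)\Gamma\rceil)$ genuinely arise from a single underlying Cartier-type trace on $F^e_*K$, so that $\psi_s$ and $\varphi_s$ really do act by the same formula on $F^e_*R(-G)$. Once this compatibility is pinned down, essentially by factoring both computations through the canonical trace $F^e_*\omega_R \to \omega_R$ coming from $F^!\omega_X^\bullet \cong \omega_X^\bullet$, the rest of the argument is divisor arithmetic and routine comparison of the defining conditions.
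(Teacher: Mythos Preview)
Your approach is correct and essentially the same as the paper's: both rest on the Hom isomorphism
\[
\Hom_R\bigl(F^e_*R(\lceil(p^e-1)(D+\Gamma+G)\rceil),\, R\bigr)\;\cong\;\Hom_R\bigl(F^e_*\omega_R(p^eD+\lceil(p^e-1)\Gamma\rceil),\, \omega_R(D)\bigr)
\]
and the equivalence, for submodules of $R(-G)$, between ``meets $R^{\circ,D}$'' and ``equals $\omega_R(D)$ at the generic points of $D$.'' Your phrase ``the two classes of candidates coincide'' is a slight overstatement since a test-ideal candidate $J$ need not lie in $R(-G)$, but your actual argument via $J' = J \cap R(-G)$ correctly yields equality of the minima and in fact makes explicit the containment $\tau_D(R,D+\Gamma+G)\subseteq R(-G)$ that the paper invokes ``by definition.''
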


\begin{proof}
First, we verify that $\tau_D(R,D+\Gamma+G)$ is a submodule of $\omega_R(D)$. 
Noting that for all powers $q=p^e$ of $p$, we have the isomorphism 
\[
    \Hom_R(F^e_*R(\lceil (q-1)(D+\Gamma+G) \rceil), R)\cong \Hom_R(F^e_*R(\lceil (q-1)(D+\Gamma)-G \rceil), R(-G))
\]
and the inclusion $F^e_*R(-G) \subseteq F^e_*R(\lceil (q-1)(D+\Gamma)-G \rceil)$, we see that $\varphi(F^e_*R(-G))$ is a submodule of $R(-G)$ for all $\varphi \in \Hom_R(F^e_*R(\lceil (q-1)(D+\Gamma+G) \rceil), R)$. 
Moreover, $R(-G)\cap R^{\circ,D} \neq \emptyset$, as $G$ has no common components with $D$.
Thus, by the minimality of the test ideal along $D$, it follows that $\tau_D(R,D+\Gamma+G)\subseteq R(-G) =\omega_R(D)$.

Next, we show that $\tau_D(R,D+\Gamma+G)=\tau_D(\omega_R, \Gamma)$. 
Since 
\[
\Hom_R(F^e_*R(\lceil (q-1)(D+\Gamma+G)\rceil), R) \cong \Hom_R(F^e_*\omega_R(qD+\lceil (q-1)\Gamma \rceil), \omega_R(D))
\]
for all powers $q=p^e$, the ideal $\tau_D(R, D+\Gamma+G)$ is the smallest submodule $M$ of $\omega_R(D)$ with $M \cap R^{\circ, D} \ne \emptyset$ satisfying the condition (b) in Definition \ref{parameter test defn}. 
On the other hand, a submodule $N$ of $\omega_R(D)$ satisfies that $N \cap R^{\circ, D} \ne \emptyset$ if and only if there exists an element $c \in R^{\circ, D}$ such that $c \omega_R(D) \subseteq N$, which is equivalent to the condition (a) in Definition \ref{parameter test defn}. 
Therefore, $\tau_D(R, D+\Gamma+G)$ coincides with $\tau_D(\omega_R, \Gamma)$. 
\end{proof}

\begin{rem}\label{basic properties of parameter test}
Thanks to Lemma \ref{test ideal interpretation}, several basic properties of $\tau_D(\omega_R, \Gamma)$ can be deduced from the corresponding properties of $\tau_D(R, \Delta)$. For example, 
\begin{enumerate}
\item the formation of $\tau_D(\omega_R, \Gamma)$ commutes with localization, 
\item if $(R,\m)$ is local, then the formation of $\tau_D(\omega_R, \Gamma)$ commutes with $\m$-adic completion, and 
\item if $B$ is an effective Cartier divisor on $\Spec R$ with no common components with $D$, then $\tau_D(\omega_R, \Gamma+B)=\tau_D(\omega_R, \Gamma) \otimes_R R(-B)$. 
\end{enumerate}
\end{rem}

For each integer $e \ge 1$, let 
\[
R \to F^e_*R \hookrightarrow F^e_*R(\lceil (p^e-1) (D+\Gamma) \rceil)  \tag{$\star$}
\]
be the composite of the $e$-times iterated Frobenius map $R \to F^e_*R$ and the pushforward of the natural inclusion $R \hookrightarrow R(\lceil (p^e-1) (D+\Gamma) \rceil)$ by $F^e$. 

\begin{defn}\label{test submodule local}
With notation as in Setting \ref{setting test submodules}, suppose that $(R,\m)$ is local. % of dimensional $d$. 
Tensoring $(\star)$ with $I_D:=R(-D)$ and taking local cohomology, one has a map 
\[
F^e_{D, \Gamma}: H^d_{\m}(I_D) \to H^d_{\m} (I_D(\lceil (p^e-1) \Gamma \rceil)), 
\]
where $I_D(\lceil (p^e-1) \Gamma \rceil):=R(-D+\lceil (p^e-1) \Gamma \rceil)$.  
The submodule 
$Z(D;\Gamma)$
of $H^d_\m(I_D)$ consists of all elements $z \in H^d_\m(I_D)$ for which there exists an element $c \in R^{\circ, D}$ such that 
\[c  F^e_{D, \Gamma}(z)=0 \in H^d_{\m} (I_D(\lceil (p^e-1) \Gamma \rceil))\]
for all large $e$. 
It is simply denoted as $Z(D)$ when $\Gamma=0$. 
\end{defn}

\begin{lem}\label{test submodule dual}
With notation as in Setting \ref{test submodule local}, 
let $E_R(R/\m)$ be an injective hull of the residue field $R/\m$. Then 
\[
\tau_{D}(\omega_R, \Gamma)=\Ann_{\omega_R(D)} Z(D;\Gamma), 
\]
the annihilator of $Z(D;\Gamma)$ in $\omega_R(D)$ with respect to the duality pairing 
\[\omega_R(D) \times H^d_{\m}(I_D) \to E_R(R/\m).\]
\end{lem}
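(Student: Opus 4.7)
The strategy is to transport the Matlis-dual description of the classical test ideal along $D$ to the language of $H^d_\m(I_D)$, using Lemma \ref{test ideal interpretation} as the bridge. First choose a canonical divisor $K_X$ so that $G := -K_X - D$ is an effective Weil divisor with no common components with $D$; then $\omega_R(D) = R(-G)$. By Lemma \ref{test ideal interpretation}, $\tau_D(\omega_R, \Gamma) = \tau_D(R, D+\Gamma+G)$ as fractional ideals of $R$. Since the formation of both sides of the claimed equality commutes with $\m$-adic completion (by Remark \ref{basic properties of parameter test}(2) together with the invariance of local cohomology and of the Matlis functor), we may assume $R$ is complete. Then the alternative characterization of test ideals along divisors recalled in the preliminaries gives
\[
\tau_D(R, D+\Gamma+G) \;=\; \Ann_R\bigl(0^{*_D(D+\Gamma+G)}_E\bigr), \qquad E := E_R(R/\m),
\]
and the task reduces to showing that this annihilator (automatically contained in $R(-G) = \omega_R(D)$) coincides with $\Ann_{\omega_R(D)}\bigl(0^{*_D\Gamma}_{H^d_\m(I_D)}\bigr)$.

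In the complete case, local duality yields $H^d_\m(I_D) \cong \Hom_R(\omega_R(D), E)$, under which the pairing in the statement is the evaluation pairing. The key step is to Matlis-dualize the map $F^e_{D,\Gamma}\colon H^d_\m(I_D) \to H^d_\m(I_D(\lceil (p^e-1)\Gamma \rceil))$. Combining local duality with the canonical isomorphism
\[
\Hom_R\!\bigl(F^e_*R(\lceil (p^e-1)(D+\Gamma+G)\rceil),\, R\bigr) \;\cong\; \Hom_R\!\bigl(F^e_*\omega_R(p^e D+\lceil (p^e-1)\Gamma \rceil),\, \omega_R(D)\bigr)
\]
from the proof of Lemma \ref{test ideal interpretation}, the dualized map becomes precisely the map on $E$ obtained by tensoring the composite $(\star)$ with $E$, and multiplication by $c \in R^{\circ, D}$ on the $H^d_\m$-side dualizes to tensoring with $F^e_*c$ on the $E$-side. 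Consequently, the vanishing $cF^e_{D,\Gamma}(z) = 0$ for all large $e$ on $z \in H^d_\m(I_D)$ corresponds under Matlis duality to $F^e_*c \otimes \widehat{z} = 0$ for all large $e$ on the dual element $\widehat{z} \in E$; in other words, $0^{*_D\Gamma}_{H^d_\m(I_D)}$ and $0^{*_D(D+\Gamma+G)}_E$ are Matlis duals of each other. Perfectness of the Matlis pairing then identifies the two annihilators, yielding the stated equality.

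\textbf{Main obstacle.} The delicate point is carrying out the Matlis dualization of the twisted Frobenius map $F^e_{D,\Gamma}$: one has to track the twists by $D$, $\Gamma$, and $G$ simultaneously, identify the Matlis dual of $H^d_\m(I_D(\lceil (p^e-1)\Gamma\rceil))$ with the correct twisted avatar of $\omega_R$, and verify that multiplication by $c$ on local cohomology dualizes to the tensor condition $F^e_*c \otimes (-) = 0$ in $E$. The $\Hom$-space identification used in the proof of Lemma \ref{test ideal interpretation} is exactly the ingredient that matches the Frobenius-twisted sides; once that correspondence is established, the equality of annihilators becomes a formal consequence of Matlis duality.
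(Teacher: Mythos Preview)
Your strategy---to route through Lemma \ref{test ideal interpretation} and the known description $\tau_D(R,D+\Gamma+G)=\Ann_R 0^{*_D(D+\Gamma+G)}_E$---is a natural idea, but the central step has a genuine gap. You assert that ``$0^{*_D\Gamma}_{H^d_\m(I_D)}$ and $0^{*_D(D+\Gamma+G)}_E$ are Matlis duals of each other'' and speak of ``the dual element $\widehat{z}\in E$'' for $z\in H^d_\m(I_D)$. But $H^d_\m(I_D)$ is Matlis-dual to $\omega_R(D)$, while $E$ is Matlis-dual to $R$; these are different modules unless $G=0$. There is no canonical element of $E$ attached to $z\in H^d_\m(I_D)$, and the Matlis dual of $F^e_{D,\Gamma}$ is a map between twisted canonical modules landing in $\omega_R(D)$, not a map on $E$. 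So the sentence ``the dualized map becomes precisely the map on $E$ obtained by tensoring $(\star)$ with $E$'' conflates two non-isomorphic duality pairings, and the conclusion that the two tight-closure submodules correspond does not follow.

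What is actually needed to make your route work is a direct comparison of the two annihilators inside $R(-G)$: for $a\in R(-G)$ one must show that $a\cdot 0^{*_D\Gamma}_{H^d_\m(I_D)}=0$ in $E$ if and only if $a\cdot 0^{*_D(D+\Gamma+G)}_E=0$. Establishing this requires relating the twisted Frobenius condition on $H^d_\m(I_D)$ to the tensor condition on $E$ via the multiplication map $\omega_R(D)\otimes H^d_\m(I_D)\to E$ and its compatibility with the respective $p^e$-linear maps; this is a real verification, not a formal consequence of duality, and it is essentially the content of the lemma itself.

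The paper instead follows the template of \cite[Theorem 6.3]{Schw10}: one Matlis-dualizes the defining conditions (a) and (b) of $\tau_D(\omega_R,\Gamma)$ directly. The dual of (b) says the complementary submodule $N\subseteq H^d_\m(I_D)$ is stable under all maps dual to the $\phi$'s (equivalently, under $c\cdot F^e_{D,\Gamma}$ for all $c$ and $e$), and the dual of (a) is the appropriate properness condition at the generic point of $D$. One then checks that $0^{*_D\Gamma}_{H^d_\m(I_D)}$ is the unique largest such $N$, whence its annihilator in $\omega_R(D)$ is the unique smallest $M$ satisfying (a) and (b). This avoids any comparison with $E$ and stays entirely inside the pairing $\omega_R(D)\times H^d_\m(I_D)\to E$.
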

\begin{proof}
We choose a canonical divisor $K_X$ of $X:=\Spec R$ such that $G:=-(K_X+D)$ is an effective Weil divisor with no common components with $D$. 
By Lemma \ref{test ideal interpretation}, we have $\tau_D(\omega_R,\Gamma)=\tau_D(R,D+\Gamma+G)$. 
Let $c'\in R(-\lceil \Gamma \rceil)\cap R^{\circ,D}$. 
Since 
\[
    H_\m^d(\omega_R)\otimes_R F^e_*R((p^e-1)D+\lceil p^e(\Gamma+G) \rceil)\cong F^e_*H_\m^d(I_D(\lceil p^e\Gamma \rceil)),
\]
for each $c\in R^{\circ,D}$ and $e\in \N$, 
we obtain the following commutative diagram
\[
    \xymatrix{
    H_\m^d(\omega_R) \ar[r]^-{\cdot F^e_*c} \ar[d] & F^e_*H_\m^d(I_D(\lceil p^e\Gamma\rceil)) \ar[d]^-{\cdot F^e_*c'} \\
    H_\m^d(I_D) \ar[r]^-{\cdot F^e_*(cc')} \ar[rd]%_-{\cdot F^e_*(cc')}
    & F^e_*H_\m^d((I_D(\lceil (p^e-1)\Gamma\rceil)) \ar[d]\\
        & F^e_*H_\m^d(I_D(\lceil p^e\Gamma\rceil)), 
    }
\]
which yields another commutative diagram:
\[
    \xymatrix{
         0_{H_\m^d(\omega_R)}^{*_D(D+\Gamma+G)} \ar@{^{(}->}[r] \ar@{->>}[d] & H_\m^d(\omega_R) \ar@{->>}[d]\\
        Z(D;\Gamma) \ar@{^{(}->}[r] & H_\m^d(I_D).
    }
\]
Let $\tau'_D(\omega_R,\Gamma)$ denote the right hand side of the desired formula. 
The Matlis dual of the above diagram induces a natural inclusion 
\[\omega_R(D)/\tau'_D(\omega_R,\Gamma) \hookrightarrow R/\tau_D(R,D+\Gamma+G).\] 
Here, note that $\omega_R(D)$ is an ideal of $R$ by our choice of $K_X$. 
This injectivity implies that 
\[\tau_D(R,D+\Gamma+G) \cap \omega_R(D)=\tau'_D(\omega_R, \Gamma).\] 
Moreover, as shown in the proof of Lemma \ref{test ideal interpretation}, $\tau_D(R,D+\Gamma+G)$ is a submodule of $\omega_R(D)$. 
Thus, we conclude that 
\[
\tau_D(\omega_R,\Gamma)=\tau_D(R,D+\Gamma+G)=\tau'_D(\omega_R,\Gamma).
\]
\end{proof}

In Proposition \ref{parameter test=image of trace} and Lemma \ref{equational lemma},  
we assume that $D$ is a prime divisor for simplicity. 
We then fix a choice of $I_{D}^+$ and use the following notation.  
Given a module-finite extension $S$ of $R$ contained in $R^+$ with $S$ normal, we define the submodule $\mathcal{I}_S(\omega_R(D), \Gamma)$ of $\omega_R(D)$ as 
\[\mathcal{I}_S(\omega_R(D), \Gamma)=\mathrm{Im}(\mathrm{Tr}_{\pi}: \pi_*\omega_S(D_S - \lfloor \pi^* \Gamma \rfloor) \to \omega_R(D)), \]
where $\pi:\Spec S \to \Spec R$ is the finite surjective morphism induced by the inclusion $R \hookrightarrow S$ and $D_S$ is the prime divisor on $\Spec S$ such that $I_D^+ \cap S=S(-D_S)$. 
When $\Gamma=0$, this submodule is simply denoted by $\mathcal{I}_S(\omega_R(D))$. 

\begin{prop}\label{parameter test=image of trace}
With notation as above and as in Setting \ref{setting test submodules}, 
suppose that $D$ is a prime divisor and $\Gamma$ is $\Q$-Cartier. 
\begin{enumerate}
\item 
For every module-finite extension $S$ of $R$ contained in $R^+$ with $S$ normal, one has 
\[
\tau_{D}(\omega_R, \Gamma) \subseteq \mathcal{I}_S(\omega_R(D), \Gamma).
\]
\item 
There exists a module-finite extension $S$ of $R$ contained in $R^+$ such that $S$ is normal, $\pi^*\Gamma$ is Cartier, and the equality holds in (1), that is, 
\[
\tau_{D}(\omega_R, \Gamma)=\mathcal{I}_S(\omega_R(D), \Gamma). 
\]
\end{enumerate}
\end{prop}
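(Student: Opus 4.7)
The strategy is to reduce both parts to the already-established Proposition \ref{completion inclusion} via the identification in Lemma \ref{test ideal interpretation}. For part (1), I would first reduce to the case where $(R,\m)$ is a complete local ring: both $\tau_D(\omega_R, \Gamma)$ and $\mathcal{I}_S(\omega_R(D), \Gamma)$ commute with localization and $\m$-adic completion (the former by Remark \ref{basic properties of parameter test}, the latter by flat base change applied to the trace map). Writing $G := -(K_X + D)$, which is effective and shares no components with $D$ by our choice of $K_X$, Lemma \ref{test ideal interpretation} identifies $\tau_D(\omega_R, \Gamma) = \tau_D(R, D + \Gamma + G)$ as fractional ideals of $R$. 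Since $K_X + D + (\Gamma + G) = \Gamma$ is $\Q$-Cartier and $\Gamma + G$ has no component equal to $D$, Proposition \ref{completion inclusion} applies with $\Delta := \Gamma + G$; its trace-map image coincides with $\mathcal{I}_S(\omega_R(D), \Gamma)$ after the identification $\omega_R(D) = R(-G) \subseteq R$, producing the desired containment. One bookkeeping item is that $S \otimes_R \hat{R}$ may split as a product of normal local domains even when $\hat{R}$ does not (the latter being automatic for $F$-finite $R$); Proposition \ref{completion inclusion} can be applied on each factor and the trace images summed.

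For part (2), the family $\{\mathcal{I}_S(\omega_R(D), \Gamma)\}$, indexed by module-finite normal extensions $R \subseteq S \subseteq R^+$, is decreasing: for $S \subseteq T$ the factorization $\Tr_{T/R} = \Tr_{S/R} \circ \Tr_{T/S}$ of trace maps gives $\mathcal{I}_T \subseteq \mathcal{I}_S$. Restricting to extensions $S$ containing $f^{1/r}$, where $r\Gamma = \Div f$ (so that $\pi^*\Gamma$ is automatically Cartier), Noetherianity of $R$ forces the descending chain to stabilize at some $S_0$. After reducing to the complete local case as in part (1), this stable value equals $\bigcap_S \mathcal{I}_S$; by the direct-limit description $R^+ = \varinjlim_S S$ together with a Matlis-duality argument analogous to the one used in the proof of Proposition \ref{completion inclusion}, this intersection is exactly the BCM adjoint ideal $\tau_{+, D}(R, D + \Gamma + G)$. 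Invoking the coincidence $\tau_D(R, D + \Delta) = \tau_{+, D}(R, D + \Delta)$ in the $F$-finite complete local setting (a result of Ma-Schwede-Tucker-Waldron-Witaszek, cf.~\cite{MSTWW}), together with Lemma \ref{test ideal interpretation}, completes the identification $\mathcal{I}_{S_0} = \tau_D(\omega_R, \Gamma)$.

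The main obstacle I anticipate is this last identification: it rests on two deep inputs, a Matlis-dual description of $\tau_{+, D}$ as the intersection $\bigcap_S \mathcal{I}_S$, and the characteristic-$p$ equality $\tau_D = \tau_{+, D}$ of test ideals along $D$ with BCM adjoint ideals along $D$. The technical bookkeeping around reducing to complete local rings (the possible splitting of $\hat{S}$, the compatibility of the formation of $\mathcal{I}_S$ with completion) and around enlarging $S$ while preserving both stability of the chain and the $\pi^*\Gamma$-Cartier condition should be routine but must be handled carefully.
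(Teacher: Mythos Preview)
For part (1), your approach is valid but differs from the paper's. Rather than reducing wholesale to a complete local ring, the paper verifies directly that $\mathcal{I}_S(\omega_R(D),\Gamma)$ satisfies conditions (a) and (b) of Definition~\ref{parameter test defn}, whence minimality of $\tau_D(\omega_R,\Gamma)$ gives the inclusion. Condition (b) follows from a commutative diagram expressing compatibility of $\Tr_\pi$ with $\Tr_{F^e}$; condition (a) is checked by localizing at the generic point of $D$ (reducing to an $F$-finite DVR) and only then completing---the decomposition $\widehat S\cong S_1\times\cdots\times S_r$ you flag appears here, and Proposition~\ref{completion inclusion} is applied to the single factor $S_1$ carrying $D_S$. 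Your global invocation of Proposition~\ref{completion inclusion} works for this inclusion because the summands from the remaining factors $S_i$ only enlarge $(\mathcal{I}_S)^\wedge$.

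For part (2) there are genuine gaps. First, the paper cites \cite[Proposition~6.4]{MSTWW} only for the inclusion $\tau_D\subseteq\tau_{+,D}$, and then \emph{derives} the equality (Proposition~\ref{divisorial test ideal equals BCM adjoint ideal}) from the present proposition via Corollary~\ref{test ideal finite extension}; so invoking $\tau_D=\tau_{+,D}$ here risks circularity unless you can locate an independent proof. Second, even granting that equality over each $\widehat R_\p$, the completion step now obstructs the \emph{reverse} inclusion $(\mathcal{I}_{S_0})^\wedge\subseteq\tau_{\widehat D}$: the extra summands from factors of $\widehat{S_0}$ not carrying $D_{S_0}$ have no $D$-twist and need not lie in $\tau_{\widehat D}$, and an extension of $\widehat R_\p$ realizing $\tau_{+,\widehat D}$ need not descend to any global $T\subseteq R^+$ to contradict stabilization at $S_0$. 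The paper avoids both issues by first reducing to $\Gamma=0$ (passing via \cite[Proposition~6.5]{MSTWW} to a separable cover on which $\Gamma$ becomes Cartier) and then proving Lemma~\ref{equational lemma}: using the Huneke--Lyubeznik equational lemma for Frobenius-stable submodules of local cohomology, it shows that whenever $\mathcal{I}_S(\omega_R(D))\supsetneq\tau_D(\omega_R)$ one can find a global normal $T\supseteq S$ inside $R^+$ with $\Supp(\mathcal{I}_T/\tau_D)\subsetneq\Supp(\mathcal{I}_S/\tau_D)$; Noetherian induction on the support then finishes.
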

\begin{proof}
(1) 
First note that the formation of $\mathcal{I}_S(\omega_R(D), \Gamma)$ commutes with localization. 
Therefore, by Remark \ref{basic properties of parameter test} (1), we may assume that $(R,\m)$ is local. 
By the minimality of $\tau_{D}(\omega_R, \Gamma)$, it suffices to show that the submodule $\mathcal{I}_S(\omega_R(D), \Gamma)$ of $\omega_R(D)$ satisfies conditions (a) and (b) in Definition \ref{parameter test defn}. 

To verify the condition (a), by localizing at the generic point of $D$, we may assume that $R$ is an $F$-finite DVR, $S$ is a Dedekind domain and $\Gamma=0$. 
Let $\widehat{R}$ denote the completion of $R$, $\widehat{D}$ denote the flat pullback of $D$ via the canonical morphism $\Spec \widehat{R} \to \Spec R$, and set $\widehat{S}:=S \otimes_R \widehat{R}$. 
The $\widehat{R}$-algebra $\widehat{S}$ is isomorphic to a finite product $S_1 \times \cdots \times S_r$ of complete DVRs $(S_i, \mathfrak{n}_i)$, and $S(-D_S)\widehat{S}$ is a maximal ideal of $\widehat{S}$. After reindexing, we may assume that $S(-D_S)\widehat{S} \cong \mathfrak{n}_1 \times S_2 \times \cdots \times S_r$. 
Then one has 
\[
\mathcal{I}_S(\omega_R(D)) \otimes_R \widehat{R}=\mathrm{Im}\left({\pi_1}_*\omega_{S_1}(D_{S_1}) \to \omega_{\widehat{R}}(\widehat{D})\right)+
\sum_{i=2}^r \mathrm{Im}\left({\pi_i}_*\omega_{S_i} \to \omega_{\widehat{R}}(\widehat{D})\right),
\] 
where $\pi_i:\Spec S_i \to \Spec \widehat{R}$ is the finite surjective morphism induced by $\widehat{R} \to \widehat{S} \to S_i$ and $D_{S_1}$ is the prime divisor on $\Spec S_1$ corresponding to $\mathfrak{n}_1$. 
To verify that $\mathcal{I}_S(\omega_R(D))=\omega_R(D)$, it suffices to show that $\mathrm{Im}({\pi_1}_*\omega_{S_1}(D_{S_1})\to \omega_{\widehat{R}}(\widehat{D}))=\omega_{\widehat{R}}(\widehat{D})$. 
Therefore, we can restrict our attention to the case where $R$ and $S$ are both complete DVRs. 
It follows from Proposition \ref{completion inclusion} and Lemma \ref{test ideal interpretation} that $\tau_D(\omega_R) \subseteq \mathcal{I}_S(\omega_R(D))$. 
Conversely, since $R$ is an $F$-finite DVR and $D$ is the divisor corresponding to the maximal ideal $\m$, it is straightforward to check that $\tau_D(\omega_R)=\omega_R(D)$. 
Consequently, we conclude that $\mathcal{I}_S(\omega_R(D))=\omega_R(D)$. 

It remains to verify that $\mathcal{I}_S(\omega_{R}(D), \Gamma)$ satisfies the condition (b) in Definition \ref{parameter test defn}. 
For any nonzero element $F^e_*c \in F^e_*R$, we have the following commutative diagram:
\[
\xymatrix{
F^e_*\pi_*\omega_S(D_S-\lfloor \pi^*\Gamma \rfloor) \ar@{^{(}->}[r] \ar[d]_{F^e_*\mathrm{Tr}_{\pi}} & F^e_*\pi_*\omega_S(p^e D_S-\lfloor \pi^*\Gamma \rfloor) \ar[rr]^{\quad \pi_*\mathrm{Tr}_{F^e}(F^e_*c \cdot \underline{\hspace{0.5em}})}  \ar[d]_{F^e_*\mathrm{Tr}_{\pi}} &  & \pi_* \omega_S(D_S-\lfloor \pi^*\Gamma \rfloor) \ar[d]_{\mathrm{Tr}_{\pi}} \\ 
F^e_*\omega_R(D) \ar@{^{(}->}[r] & F^e_*\omega_R(p^eD) \ar[rr]^{\mathrm{Tr}_{F^e}(F^e_*c \cdot \underline{\hspace{0.5em}})} &  & \omega_R(D).
}
\]
Since 
\[\Hom_{R}(F^e_*\omega_{R}(p^eD+\lceil (p^e-1) \Gamma \rceil), \omega_{R}(D)) \subseteq \Hom_R(F^e_*\omega_R(p^eD), \omega_R(D))\]
and $\Hom_R(F^e_*\omega_R(p^eD), \omega_R(D))$ is generated by $\mathrm{Tr}_{F^e}:F^e_*\omega_R(p^eD) \to \omega_R(D)$ as an $F^e_*R$-module, 
the commutativity of the above diagram ensures that $\mathcal{I}_S(\omega_{R}(D), \Gamma)$ satisfies the condition (b). 

(2) By \cite[Lemma 4.15]{BST}, there exists a finite separable extension $R'$ of $R$ contained in $R^+$ such that $R'$ is normal and $\nu^*\Gamma$ is Cartier, where $\nu:\Spec R' \to \Spec R$ is the finite surjective morphism induced by the inclusion $R \hookrightarrow R'$. 
Since $\Gamma$ has no component equal to $D$, the morphism $\nu$ is \'etale over the generic point of $D$ by its construction (see the first paragraph of the proof of \cite[Theorem 6.6]{MSTWW}). 
Let $D'$ be the prime divisor on $\Spec R'$ such that $I_D^+ \cap R'=R(-D')$. 
It then follows from \cite[Proposition 6.5]{MSTWW} and Lemma \ref{test ideal interpretation} that 
\begin{align*}
\tau_D(\omega_R, \Gamma)&=\tau_D(R, D+\Gamma-(K_X+D))\\
&=\mathrm{Tr}_{\nu}(\nu_*\tau_{D'}(R', D'+\nu^*\Gamma-(\nu^*K_X+D')-\mathrm{Ram}_{\nu}))\\
&=\mathrm{Tr}_{\nu}(\nu_*\tau_{D'}(R', D'+\nu^*\Gamma-(K_{X'}+D')))\\
&=\mathrm{Tr}_{\nu}(\nu_*\tau_{D'}(\omega_{R'}, \nu^*\Gamma)), 
\end{align*}
where $X=\Spec R$, $X'=\Spec R'$ and $\mathrm{Ram}_{\nu}$ is the ramification divisor of $\nu:X' \to X$. 
On the other hand, for every finite surjective morphism $\rho:\Spec S \to \Spec R'$ with $S$ normal, one has 
\[
\mathrm{Tr}_{\nu \circ \rho}((\nu \circ \rho)_*\omega_S(D_S-\lfloor (\nu \circ \rho)^*\Gamma \rfloor))
=\mathrm{Tr}_{\nu}(\nu_*\mathrm{Tr}_{\rho}(\rho_*\omega_S(D_S-\lfloor \rho^*\nu^*\Gamma \rfloor))). 
\]
Therefore, replacing $R$ with $R'$ and $\Gamma$ with $\Gamma'$, we may assume that $\Gamma$ is a Cartier divisor. 
Furthermore, by Remark \ref{basic properties of parameter test} (3) and the projection formula, we can reduce the problem to the case where $\Gamma=0$.  

Finally, we will prove that there exists a module-finite extension $S$ of $R$ contained in $R^+$ such that $S$ is normal and  $\tau_D(\omega_R)=\mathcal{I}_S(\omega_R(D))$. 
It follows from repeated applications of Lemma \ref{equational lemma}. 
\end{proof}

\begin{lem}\label{equational lemma}
With notation as in Proposition \ref{parameter test=image of trace}, 
let $S$ be a module-finite extension of $R$ contained in $R^+$ with $S$ normal.  
Note that $\tau_D(\omega_R) \subseteq \mathcal{I}_S(\omega_R(D))$ by Proposition \ref{parameter test=image of trace} (1). 
If $\tau_D(\omega_R) \ne  \mathcal{I}_S(\omega_R(D))$, then there exists a module-finite extension $T$ of $S$ contained in $R^+$ such that $T$ is normal and 
\[
\Supp \mathcal{I}_T(\omega_R(D))/\tau_D(\omega_R) \subsetneq \Supp \mathcal{I}_S(\omega_R(D))/\tau_D(\omega_R). 
\]
\end{lem}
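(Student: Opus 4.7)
My approach is to reduce to the complete local setting, dualize via Matlis duality to submodules of $H^d_\m(I_D)$, and then absorb elements of the tight-closure-like submodule $0^{*_D}_{H^d_\m(I_D)}$ one at a time via an equational lemma in $R^+$.

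First, if $T \supseteq S$ is a module-finite normal extension inside $R^+$, the trace $\omega_T(D_T) \to \omega_R(D)$ factors as $\Tr_{S/R} \circ \Tr_{T/S}$, so $\mathcal{I}_T(\omega_R(D)) \subseteq \mathcal{I}_S(\omega_R(D))$. Thus enlarging $S$ can only shrink $\Supp(\mathcal{I}_S(\omega_R(D))/\tau_D(\omega_R))$, and it suffices to remove a single chosen minimal prime $\mathfrak{p}$ of this support. Using Remark \ref{basic properties of parameter test}(1) and the obvious commutation of trace images with localization, I localize at $\mathfrak{p}$; any extension constructed over $R_\mathfrak{p}$ inside $(R_\mathfrak{p})^+$ descends to an extension of $R$ inside $R^+$ by clearing denominators on finitely many generators. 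By Remark \ref{basic properties of parameter test}(2) and flat base change for the trace, I further reduce to $(R,\m)$ being an $F$-finite complete normal local ring.

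In this complete local setting, local (Matlis) duality gives a perfect pairing $\omega_R(D) \times H^d_\m(I_D) \to E_R(R/\m)$. Setting $K_T := \ker\bigl(H^d_\m(I_D) \to H^d_\m(T(-D_T))\bigr)$, the trace $\omega_T(D_T) \to \omega_R(D)$ is Matlis-dual to the natural map $H^d_\m(I_D) \to H^d_\m(T(-D_T))$, and hence $\mathcal{I}_T(\omega_R(D)) = \Ann_{\omega_R(D)} K_T$. Combined with Lemma \ref{test submodule dual}, the inclusion $\mathcal{I}_T(\omega_R(D)) \subseteq \tau_D(\omega_R)$ is equivalent to $0^{*_D}_{H^d_\m(I_D)} \subseteq K_T$. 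Since $H^d_\m(I_D)$ is Artinian, $0^{*_D}_{H^d_\m(I_D)}$ is finitely generated, so it suffices to prove the one-element statement: for each $\eta \in 0^{*_D}_{H^d_\m(I_D)}$ and each $S$ as above, there exists a module-finite normal $T \supseteq S$ in $R^+$ with $\eta \in K_T$. Finitely many generators of $0^{*_D}$ can then be absorbed by taking successive normalized compositums.

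The one-element statement is the main obstacle and is a divisorial analog of the Hochster--Huneke equational lemma for tight closure. Unwinding the definition of $0^{*_D}$, there exist $c \in R^{\circ, D}$ and $e \gg 0$ with $c F^e_{D,0}(\eta) = 0$ in $H^d_\m(I_D)$. Working inside $R^+$, I would adjoin $c^{1/p^e}$ together with any auxiliary roots needed to preserve normality to $S$; since Frobenius is bijective on the local cohomology of $R^+$ and $R^+$ enjoys a big Cohen--Macaulay property along the prime $I_D^+$, the equation $cF^e(\eta) = 0$ forces $\eta = 0$ in the limit module $H^d_\m(I_D R^+)$. Because $H^d_\m$ commutes with the direct limit $R^+ = \varinjlim_T T$ over module-finite normal subextensions, $\eta$ already dies in $H^d_\m(T(-D_T))$ for some finite $T$, and replacing $T$ by the normalization of its compositum with $S$ yields the required extension. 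The subtle point, and main difficulty, is verifying the needed BCM-type behaviour of $R^+$ along the divisor $D^+$ so that $H^d_\m(I_D R^+)$ genuinely sees $0^{*_D}$ as the kernel from $H^d_\m(I_D)$; this is exactly the divisorial refinement of the classical identification $0^*_{H^d_\m(R)} = \ker(H^d_\m(R) \to H^d_\m(R^+))$ due to Hochster--Huneke.
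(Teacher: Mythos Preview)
Your overall strategy---localize at a minimal prime of the support, pass to the complete local setting, and Matlis-dualize so that the problem becomes killing (the image of) $0^{*_D}_{H^d_\m(I_D)}$ inside $H^d_\m(T(-D_T))$ for a suitable $T$---matches the paper's approach exactly. The reduction steps and the dual reformulation are correct.

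The gap is in your ``one-element statement''. Your proposed argument (adjoin $c^{1/p^e}$, invoke bijectivity of Frobenius on local cohomology of $R^+$, and appeal to an unspecified BCM property along $I_D^+$) is not a proof, and you acknowledge as much. The difficulty is genuine: the equational lemma of Huneke--Lyubeznik applies to Frobenius-stable finite-length submodules of $H^i_\m(A)$ for a \emph{ring} $A$, showing they die in $H^i_\m(A^+)$; it does not directly apply to $H^d_\m(I_D^+)$, since $I_D^+$ is only a module. Simply taking $p^e$-th roots of $c$ does not force $\eta$ to vanish even in the classical (non-divisorial) case---that argument already requires the equational lemma, not just perfectness of $R^+$.

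The paper fills this gap by a \emph{two-step} application of the equational lemma, exploiting the exact sequence $0 \to I_D^+ \to R^+ \to (R/I_D)^+ \to 0$. Writing $N_S$ for the (finite-length) image of $0^{*_D}$ in $H^d_\m(S(-D_S))$, one first observes that the further image of $N_S$ in $H^d_\m(S)$ is Frobenius-stable, so the equational lemma kills it in $H^d_\m(R^+)$. Since $R^+$ is big Cohen--Macaulay, $H^{d-1}_\m(R^+)=0$, so the connecting map $H^{d-1}_\m((R/I_D)^+) \hookrightarrow H^d_\m(I_D^+)$ is injective; hence the image of $N_S$ in $H^d_\m(I_D^+)$ lifts uniquely to a finitely generated $M_+ \subseteq H^{d-1}_\m((R/I_D)^+)$. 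One checks $M_+$ is again Frobenius-stable, and a second application of the equational lemma (now for the ring $R/I_D$ and its absolute integral closure) gives $M_+ = 0$, hence $N_S$ dies in $H^d_\m(I_D^+)$. Finite generation then produces the desired finite $T$. This d\'evissage through the exact sequence is the missing idea in your sketch.
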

\begin{proof}
Let $\eta$ be a minimal prime of $\Supp \mathcal{I}_S(\omega_R(D))/\tau_D(\omega_R)$, 
and $R_{\eta}$, $S_{\eta}$ and $D_{\eta}$ denote the localization of $R$, $S$ and $D$ at $\eta$, respectively.  
Note that taking absolute integral closure commutes with localization, that is, $(R^+)_{\eta} \cong (R_{\eta})^+$ and $(I_D^+)_{\eta} \cong I_{D_{\eta}}^+$. 
Since the formation of $\mathcal{I}_T(\omega_R(D))$ and $\tau_D(\omega_R)$ commutes with localization (see Remark \ref{basic properties of parameter test} (1)), we have the following sequence: 
\[
\omega_{S_{\eta}}(D_{S_{\eta}}) \twoheadrightarrow \mathcal{I}_{S_{\eta}}(\omega_{R_{\eta}}(D_{\eta}))/\tau_{D_{\eta}}(\omega_{R_{\eta}}) \hookrightarrow \omega_{R_{\eta}}(D_{\eta})/\tau_{D_{\eta}}(\omega_{R_{\eta}}). 
\]
By Lemma \ref{test submodule dual}, applying the Matlis dual functor $(-)^{\vee}:=\Hom_{R_{\eta}}(-, E_{R_{\eta}}(R_{\eta}/\eta R_{\eta}))$ yields the sequence 
\[
H^d_{\eta R_{\eta}}(S_{\eta}(-D_{S_\eta})) \hookleftarrow (\mathcal{I}_{S_{\eta}}(\omega_{R_{\eta}}(D_{\eta}))/\tau_{D_{\eta}}(\omega_{R_{\eta}}))^{\vee} \twoheadleftarrow 
Z(D_{\eta}),
\]
where $d=\dim R_{\eta}$. 
Here, to obtain the isomorphism $(\omega_{R_{\eta}}(D_{\eta})/\tau_{D_{\eta}}(\omega_{R_{\eta}}))^{\vee} \cong Z(D_{\eta})$,
%0^{*_{D_{\eta}}0}_{H^d_{\eta R_{\eta}}(I_{D_{\eta}})}$, 
we utilized the fact that the formation of $\tau_{D_{\eta}}(\omega_{R_{\eta}})$ commutes with completion (see Remark \ref{basic properties of parameter test} (2)). 
We will show below that there exists a module-finite extension $T$ of $S$ contained in $R^+$ such that $T$ is normal and the image of $(\mathcal{I}_{S_{\eta}}(\omega_{R_{\eta}}(D_{\eta}))/\tau_{D_{\eta}}(\omega_{R_{\eta}}))^{\vee}$ vanishes in $H^d_{\eta R_{\eta}}(T_{\eta}(-D_{T_\eta}))$. 
 By the commutativity of the diagram
 \[
 \xymatrix{
 H^d_{\eta R_{\eta}}(S_{\eta}(-D_{S_\eta})) \ar[d] & (\mathcal{I}_{S_{\eta}}(\omega_{R_{\eta}}(D_{\eta}))/\tau_{D_{\eta}}(\omega_{R_{\eta}}))^{\vee} \ar@{_{(}->}[l] \ar@{>>}[d] \\
 H^d_{\eta R_{\eta}}(T_{\eta}(-D_{T_\eta})) & (\mathcal{I}_{T_{\eta}}(\omega_{R_{\eta}}(D_{\eta}))/\tau_{D_{\eta}}(\omega_{R_{\eta}}))^{\vee}, \ar@{_{(}->}[l]\\
 }
 \]
this vanishing ensures that $(\mathcal{I}_{T_{\eta}}(\omega_{R_{\eta}}(D_{\eta}))/\tau_{D_{\eta}}(\omega_{R_{\eta}}))^{\vee}=0$. 
Consequently, $\eta$ does not lie in the support of $\mathcal{I}_T(\omega_R(D))/\tau_D(\omega_R)$, which implies the assertion of Lemma \ref{equational lemma}. 

Set $N_S:=(\mathcal{I}_{S_{\eta}}(\omega_{R_{\eta}}(D_{\eta}))/\tau_{D_{\eta}}(\omega_{R_{\eta}}))^{\vee}$. 
The $R_{\eta}$-module $N_S$ has finite length by the choice of $\eta$,  and we have the following commutative diagram:
\[
\xymatrix{
Z(D_{\eta}) \ar@{^{(}->}[r] \ar@{>>}[d] &  H^d_{\eta R_{\eta}}(I_{D_{\eta}}) \ar@{>>}[r] \ar[d] & H^d_{\eta R_{\eta}}(R_{\eta}) \ar[d] \\
N_S \ar@{^{(}->}[r]  & H^d_{\eta R_{\eta}}(S_{\eta}(-D_{S_\eta})) \ar@{>>}[r] & H^d_{\eta R_{\eta}}(S_{\eta}). 
}
\]
Since the image of $Z(D_{\eta})$ in $H^d_{\eta R_{\eta}}(R_{\eta})$ is stable under Frobenius action, the image of $N_S$ in $H^d_{\eta R_{\eta}}(S_{\eta})$ is also stable under Frobenius action. 
It then follows from the equational lemma \cite[Lemma 2.2]{HL} (see also \cite{BST}) that the image of $N_S$ vanishes in $H^{d}_{\eta R_{\eta}}(R^+_{\eta})$. 
Noting that $R^+_{\eta}$ is a big Cohen-Macaulay $R_{\eta}$-algebra, and therefore $H^{d-1}_{\eta R_{\eta}}(R^+_{\eta})=0$, we can consider the following commutative diagram with exact rows:
\[
\xymatrix{
& H^{d-1}_{\eta R_{\eta}}(S_{\eta}/S_{\eta}(-{D_{S_\eta}})) \ar[r] \ar[d] & H^d_{\eta R_{\eta}}(S_{\eta}(-D_{S_{\eta}})) \ar[r] \ar[d]^{\alpha} & H^d_{\eta R_{\eta}}(S_{\eta}) \ar[d]^{\beta} \ar[r] & 0\\
0 \ar[r] & H^{d-1}_{\eta R_{\eta}}((R_{\eta}/I_{D_{\eta}})^+) \ar[r]^{f} & H^d_{\eta R_{\eta}}(I_{D_{\eta}}^+)\ar[r]^g & H^d_{\eta R_{\eta}}(R^+_{\eta}) \ar[r] & 0.
}
\]
Simple diagram chasing shows the existence of a finitely generated $R_{\eta}$-submodule $M_+$ of $H^{d-1}_{\eta R_{\eta}}((R_{\eta}/I_{D_{\eta}})^+)$ such that $f(M_+)=\alpha(N_S)$. 
Since $Z(D_{\eta})$ is stable under the map $F_{D_{\eta},0}: H^d_{\eta R_{\eta}}(I_{D_{\eta}}) \to H^d_{\eta R_{\eta}}(I_{D_{\eta}})$ (as defined in Definition \ref{test submodule local}), its image $\alpha(N_S)$ is also stable under the induced map $H^d_{\eta R_{\eta}}(I^+_{D_{\eta}}) \to H^d_{\eta R_{\eta}}(I^+_{D_{\eta}})$. 
The injectivity of $f$ consequently ensures that $M_+$ is stable under Frobenius action.
Applying the equational lemma again, we deduce that $\alpha(N_S) \cong M_+=0$. 
Thus, by the finite generation of $N_S$, there exists a module-finite extension $T$ of $S$ contained in $R^+$ such that $T$ is normal and the image of $N_S$ vanishes in $H^d_{\eta R_{\eta}}(T_{\eta}(-D_{T_\eta}))$. 
\end{proof}

\begin{cor}\label{test ideal finite extension}
With notation as in Setting \ref{setting test submodules}, suppose that $D$ is a prime divisor and 
$K_X+D+\Gamma$ is $\Q$-Cartier. Fix a choice of $I_D^+$. 
\begin{enumerate}
\item For every module-finite extension $S$ of $R$ contained in $R^+$ with $S$ normal, one has 
\[
\tau_D(R,D+\Gamma) \subseteq \mathrm{Im}(\mathrm{Tr}_{\pi}: \pi_*\sO_Y(K_Y-\lfloor \pi^*(K_X+D+\Gamma) \rfloor +D_S) \to K(X)), 
\]
where $Y:=\Spec S \xrightarrow{\pi} X$ is the finite morphism induced by the inclusion $R \hookrightarrow S$ and $D_S$ is the prime divisor such that $S(-D_S)=I_D^+ \cap S$.  
\item There exists a module-finite extension $S$ of $R$ contained in $R^+$ such that $S$ is a normal domain and the equality holds in the inclusion in (1).  
\end{enumerate}
\end{cor}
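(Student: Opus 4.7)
The plan is to deduce both parts of Corollary~\ref{test ideal finite extension} by directly adapting the arguments of Proposition~\ref{parameter test=image of trace} and Lemma~\ref{equational lemma}. The key algebraic observation is that the hom module $\Hom_R(F^e_*R(\lceil(p^e-1)(D+\Gamma)\rceil), R)$ depends only on the class of $K_X+D+\Gamma$ as a $\Q$-Cartier divisor, not on a separate $\Q$-Cartier structure on $\Gamma$. Concretely, the Frobenius trace $\mathrm{Tr}_{F^e}$ yields an $R$-module isomorphism
\[
\Hom_R(F^e_*R(\lceil(p^e-1)(D+\Gamma)\rceil), R) \cong F^e_*R(\lfloor-(p^e-1)(K_X+D+\Gamma)\rfloor),
\]
identifying this hom module with an $F^e_*R$-module whose generators are the composites $F^e_*c\cdot\mathrm{Tr}_{F^e}(-)$. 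The analogous identification holds over $S$, with $\pi^*(K_X+D+\Gamma)$ in place of $K_X+D+\Gamma$; this is well-defined precisely because $K_X+D+\Gamma$ is $\Q$-Cartier. This is why the weaker hypothesis of Corollary~\ref{test ideal finite extension} suffices, even when $\Gamma$ itself fails to be $\Q$-Cartier.

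For part~(1), I reduce to $(R,\m)$ local using Remark~\ref{basic properties of parameter test}. Write $\mathcal{J}_S$ for the fractional ideal on the right of the claimed inclusion. By the minimality characterization of $\tau_D(R,D+\Gamma)$, it suffices to check that $\mathcal{J}_S$ meets $R^{\circ, D}$ nontrivially and that $\mathcal{J}_S$ is stable under every $\varphi \in \Hom_R(F^e_*R(\lceil(p^e-1)(D+\Gamma)\rceil), R)$. The first condition reduces, by localizing at the generic point of $D$ (where $\Gamma$ disappears), to the $F$-finite DVR case, handled verbatim as in the proof of Proposition~\ref{parameter test=image of trace}(1). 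For the second, write $\varphi = F^e_*c\cdot\mathrm{Tr}_{F^e}(-)$ with $F^e_*c \in F^e_*R(\lfloor-(p^e-1)(K_X+D+\Gamma)\rfloor)$, lift $c$ through $\pi$, and chase through the commutative diagram
\[
\xymatrix{
F^e_*\pi_*\sO_Y(K_Y + D_S - \lfloor \pi^*(K_X+D+\Gamma)\rfloor) \ar[r] \ar[d]_-{F^e_*\mathrm{Tr}_\pi} & \pi_*\sO_Y(K_Y + D_S - \lfloor \pi^*(K_X+D+\Gamma)\rfloor) \ar[d]^-{\mathrm{Tr}_\pi} \\
F^e_*R \ar[r]^-{\varphi} & R
}
\]
whose commutativity follows from the standard compatibility of the Frobenius trace with the finite cover trace.

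For part~(2), I iterate the argument of Proposition~\ref{parameter test=image of trace}(2). Starting from the inclusion in~(1), if $\tau_D(R,D+\Gamma) \subsetneq \mathcal{J}_S$, then picking a minimal prime $\eta$ of $\Supp(\mathcal{J}_S / \tau_D(R,D+\Gamma))$ and applying the direct analog of Lemma~\ref{equational lemma}, with $\mathcal{I}_S(\omega_R(D))$ replaced by $\mathcal{J}_S$ throughout, produces a module-finite normal extension $T$ of $S$ inside $R^+$ such that $\eta \notin \Supp(\mathcal{J}_T / \tau_D(R,D+\Gamma))$. The Noetherianity of the initial support forces termination.

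The main obstacle I expect is the careful bookkeeping for the Frobenius-trace identification and the verification that the commutative diagram in part~(1) correctly realizes the action of an arbitrary $\varphi$ via a lift through $\pi$; this requires $\pi^*(K_X+D+\Gamma)$ rather than $\pi^*\Gamma$ to play the role of the divisorial twist on $Y$, exploiting the full hypothesis on $K_X+D+\Gamma$. Once this is in place, the rest of the argument is an essentially routine translation of Proposition~\ref{parameter test=image of trace} and Lemma~\ref{equational lemma} to the test-ideal setting.
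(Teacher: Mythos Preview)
Your approach is correct but takes a genuinely different route from the paper. The paper's proof is a single line: it deduces the corollary by combining Lemma~\ref{test ideal interpretation} with Proposition~\ref{parameter test=image of trace}. Concretely, Lemma~\ref{test ideal interpretation} identifies $\tau_D(R,D+\Gamma')$ with a parameter test submodule $\tau_D(\omega_R,\Gamma)$ (for an appropriate shift of the boundary by $G=-(K_X+D)$), and under this identification the image $\mathcal{I}_S(\omega_R(D),\Gamma)$ in Proposition~\ref{parameter test=image of trace} becomes exactly the trace image appearing in the corollary, with $\pi^*\Gamma$ matching $\pi^*(K_X+D+\Gamma_{\mathrm{cor}})$. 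Thus both parts follow immediately from the already-proved Proposition~\ref{parameter test=image of trace} without rerunning any argument.

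By contrast, you redo the proofs of Proposition~\ref{parameter test=image of trace} and Lemma~\ref{equational lemma} directly in the test-ideal framework rather than passing through parameter test submodules. Your key observation---that $\Hom_R(F^e_*R(\lceil(p^e-1)(D+\Gamma)\rceil),R)\cong F^e_*R(\lfloor-(p^e-1)(K_X+D+\Gamma)\rfloor)$ depends only on the $\Q$-Cartier divisor $K_X+D+\Gamma$---is exactly right and explains why the weaker hypothesis suffices. What the paper's approach buys is economy: the work was done once for $\tau_D(\omega_R,\Gamma)$ and the corollary is a formal translation. What your approach buys is directness: it avoids the bookkeeping of choosing $K_X$ so that $-(K_X+D)$ is effective and tracking fractional ideals through Lemma~\ref{test ideal interpretation}. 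One caution for your part~(2): in Proposition~\ref{parameter test=image of trace}(2) the paper first passes to a cover trivializing $\Gamma$ (via \cite[Proposition 6.5]{MSTWW} and Remark~\ref{basic properties of parameter test}(3)) before invoking Lemma~\ref{equational lemma}; in your adaptation the analogous reduction should trivialize $K_X+D+\Gamma$, and you should make that step explicit rather than fold it into ``the direct analog of Lemma~\ref{equational lemma}.''
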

\begin{proof}
The assertion follows directly from combining Lemma \ref{test ideal interpretation} with Proposition \ref{parameter test=image of trace}. 
\end{proof}

%%%%%%%%%%%%%%%%%%%%%%%%%%%%%%%%%%%%%%%%%%%%%%%%%%%%%%%%%%%%%%%%%%%%%%%%%%%%%%
\section{A generalization of plus closure}
%%%%%%%%%%%%%%%%%%%%%%%%%%%%%%%%%%%%%%%%%%%%%%%%%%%%%%%%%%%%%%%%%%%%%%%%%%%%%%
In this section, we introduce a generalization of plus closure to give another description of test ideals along divisors. 
For the theory of classical plus closure, the reader is referred to \cite{HH92} and \cite{Sm}. 

We work with the following setting. 
\begin{setting}\label{Setting of char.p>0}
Let $R$ be a $d$-dimensional $F$-finite normal local domain, 
$\Delta$ be an effective $\Q$-Weil divisor and $D$ be a prime divisor on $X:=\Spec R$ such that no component of $\Delta$ is equal to $D$. 
We fix a choice of $I_D^+$, and let $\Lambda$ denote the set of module-finite extensions $R_{\lambda}$ of $R$, contained in $R^+$, such that each $R_{\lambda}$ is a normal domain. 
When $R_{\lambda}$ belongs to $\Lambda$, we write the morphism corresponding to the inclusion $R \hookrightarrow R_{\lambda}$ by $\pi_\lambda: X_{\lambda}:=\Spec R_{\lambda}\to X$. 
\end{setting}

\begin{defn} \label{D lambda}
With notation as in Setting \ref{Setting of char.p>0}, for each $R_{\lambda} \in \Lambda$,  
let $D_\lambda$ denotes the prime divisor on $\Spec R_{\lambda}$ such that $R_{\lambda}(-D_\lambda)=I_D^+\cap R_{\lambda}$.
\begin{enumerate}
\item 
The $R^+$-module $I_D^+(D+\Delta)$ is defined as
\[
I_D^+(D+\Delta)=\varinjlim_{R_{\lambda}} R_{\lambda}(\lfloor \pi_\lambda^*(D+\Delta)-D_{\lambda}\rfloor).
\]
Note that we have a natural inclusion $R\hookrightarrow I_D^+(D+\Delta).$
\item 
Given an ideal $J$ of $R$, the \textit{$(D+\Delta)$-plus closure} $J^{+_D(D+\Delta)}$ of $J$ along $D$ is defined to be the ideal $J(I^+_D(D+\Delta))\cap R$.
\item 
Given an $R$-module $M$, the \textit{$(D+\Delta)$-plus closure} $0_M^{+_D(D+\Delta)}$ of the zero submodule along $D$ is defined to be the kernel of the natural map $M\to M\otimes_R I_D^+(D+\Delta)$. 
\end{enumerate}
\end{defn}

\begin{rem}
An element $x \in R$ belongs to $J^{+_D(D+\Delta)}$ if and only if $\overline{x}$ does to $0_{R/J}^{+_D(D+\Delta)}$, where $\bar{x}$ is the image of $x$ under the canonical surjection $R \to R/J$.  
\end{rem}

\begin{prop}\label{divisorial test ideal equals BCM adjoint ideal}
With notation as in Setting \ref{Setting of char.p>0}, suppose that $K_X+D+\Delta$ is $\Q$-Cartier, that is,  $r(K_X+D+\Delta)=\Div f$ for some integer $r \ge 1$ and some nonzero element $f \in R$.  
Then
\[
0_{H_\m^d(\omega_R)}^{*_D (D+\Delta)}=\ker\left(H_{\m}^d(\omega_R)\xrightarrow{\cdot f^{\frac{1}{r}}} H_{\m}^d(I_D^+)\right), 
\]
where $H_{\m}^d(\omega_R)\xrightarrow{\cdot f^{\frac{1}{r}}} H_{\m}^d(I_D^+)$ is a map induced by the multiplication by $f^{\frac{1}{r}}$. 
\end{prop}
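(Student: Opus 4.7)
The plan is to prove the two asserted inclusions separately, exploiting module-finite normal extensions $R_\lambda \in \Lambda$ containing $f^{1/r}$. Since $\Div_{X_\lambda}(f^{1/r}) = \pi_\lambda^*(K_X+D+\Delta)$, multiplication by $f^{1/r}$ carries $\omega_R = \sO_X(K_X)$ into $R_\lambda(-D_\lambda)$, provided $R_\lambda$ is chosen so that $\pi_\lambda$ is sufficiently ramified along $D_\lambda$; passing to the direct limit over $\Lambda$ then yields the map $\H_\m^d(\omega_R) \xrightarrow{\cdot f^{1/r}} \H_\m^d(I_D^+)$ of the statement.

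For $(\supseteq)$ --- which corresponds, via Matlis duality, to the inclusion $\tau_D(R, D+\Delta) \subseteq \tau_{+,D}(R, D+\Delta)$ already recorded in Proposition \ref{completion inclusion} --- suppose $f^{1/r}z = 0$ in $\H_\m^d(I_D^+)$, so that in fact $f^{1/r}z = 0$ in $\H_\m^d(R_\lambda(-D_\lambda))$ for some $R_\lambda \in \Lambda$. For each $e \ge 1$ I would write $f^{p^e/r} = f^{1/r} \cdot f^{(p^e-1)/r}$, and use that $\Div f^{(p^e-1)/r} = (p^e-1)\pi_\lambda^*(K_X+D+\Delta)$ to factor multiplication by $f^{p^e/r}$ through the fractional ideal $F^e_*\omega_R((p^e-1)D + \lceil p^e\Delta\rceil)$. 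Combining this factorization with $F$-finiteness ($F^!\omega_X^\bullet \cong \omega_X^\bullet$) and the choice of a test element $c \in R^{\circ,D}$ for the divisorial test ideal (see Lemma \ref{test ideal interpretation}), the resulting vanishing translates into $F^e_*(c)\otimes z = 0$ in $F^e_*R((p^e-1)D + \lceil p^e\Delta\rceil)\otimes_R \H_\m^d(\omega_R)$ for all $e \gg 0$, so that $z \in 0_{\H_\m^d(\omega_R)}^{*_D(D+\Delta)}$.

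For $(\subseteq)$ --- the deeper direction, which is the divisorial analog of ``tight closure equals plus closure'' --- suppose $z \in 0_{\H_\m^d(\omega_R)}^{*_D(D+\Delta)}$ with witness $c \in R^{\circ,D}$. Under the natural comparison map $F^e_*R((p^e-1)D + \lceil p^e\Delta\rceil) \otimes_R \H_\m^d(\omega_R) \to \H_\m^d\bigl(F^e_*\omega_R((p^e-1)D + \lceil p^e\Delta\rceil)\bigr)$ provided by $F^!\omega_X^\bullet \cong \omega_X^\bullet$, the hypothesis rewrites as $c\cdot F^e(z) = 0$ in this local cohomology for all $e \gg 0$. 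Pushing forward along $\cdot f^{1/r}$ and again factoring $f^{p^e/r} = f^{1/r} \cdot f^{(p^e-1)/r}$, this gives $c \cdot F^e(f^{1/r} z) = 0$ in $\H_\m^d(I_D^+)$ for all such $e$. Exploiting that $c^{1/p^e} \in R^+ \setminus I_D^+$ and that Frobenius intertwines multiplication via $F^e(c^{1/p^e} \cdot f^{1/r} z) = c \cdot F^e(f^{1/r} z) = 0$, a diagram chase modeled on the equational lemma \cite[Lemma 2.2]{HL} (as used in the proof of Lemma \ref{equational lemma}) should extract the desired vanishing $f^{1/r}z = 0$ in $\H_\m^d(I_D^+)$.

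The main obstacles will be twofold. First, the careful identification of the Frobenius-twisted tensor $F^e_*R((p^e-1)D + \lceil p^e\Delta\rceil) \otimes_R \H_\m^d(\omega_R)$ with $\H_\m^d$ of a suitable reflexive sheaf requires keeping track of divisors under $\pi_\lambda^*$ in the non-$\Q$-Cartier setting (where $K_X$ and $D$ individually need not be $\Q$-Cartier, even though $K_X+D+\Delta$ is). Second, and more seriously, the final step of $(\subseteq)$ --- extracting $f^{1/r}z = 0$ from the chain $c \cdot F^e(f^{1/r} z) = 0$ --- is exactly the divisorial tight-closure-equals-plus-closure step, and will require a refinement of the Hochster--Huneke equational argument, using that $R^+/I_D^+ \cong (R/I_D)^+$ is a big Cohen--Macaulay $R/I_D$-algebra so that the relevant cohomology in degree $d-1$ vanishes and the diagram chase closes up.
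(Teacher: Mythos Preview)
Your outline is broadly correct in spirit, but it takes a substantially longer route than the paper. The paper's proof is essentially a one-liner: it cites Corollary~\ref{test ideal finite extension}, which already establishes (via the equational-lemma machinery of Lemma~\ref{equational lemma} and Proposition~\ref{parameter test=image of trace}) that there exists $R_\lambda \in \Lambda$ with $f^{1/r} \in R_\lambda$ such that
\[
\tau_D(R,D+\Delta) = \mathrm{Im}\bigl(\mathrm{Tr}_{\pi_\mu}\colon {\pi_\mu}_*\sO_{X_\mu}(K_{X_\mu}-\pi_\mu^*(K_X+D+\Delta)+D_\mu)\to \sO_X\bigr)
\]
for \emph{every} $R_\mu \in \Lambda$ containing $R_\lambda$. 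Matlis-dualizing this equality gives $0^{*_D(D+\Delta)}_{\H^d_\m(\omega_R)} = \ker\bigl(\H^d_\m(\omega_R)\xrightarrow{\cdot f^{1/r}}\H^d_\m(R_\mu(-D_\mu))\bigr)$ for all such $\mu$, and passing to the direct limit over $\mu$ gives the result immediately. The point is that both inclusions are obtained simultaneously from a single equality at a finite level, with no separate Frobenius bookkeeping required.

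Your plan, by contrast, re-derives the content of Corollary~\ref{test ideal finite extension} directly at the cohomological level: the $(\supseteq)$ direction you sketch is essentially the Matlis dual of Proposition~\ref{completion inclusion} (note that result is stated for complete $R$, so you would need to reduce to that case via Remark~\ref{basic properties of parameter test}), and your $(\subseteq)$ direction is the Matlis dual of Proposition~\ref{parameter test=image of trace}(2). This is viable, but your sketch of $(\subseteq)$ underplays what is needed: extracting $f^{1/r}z=0$ from $c\cdot F^e(f^{1/r}z)=0$ requires the \emph{two-step} application of the equational lemma carried out in Lemma~\ref{equational lemma} --- first to kill the image in $\H^d_\m(R^+)$, then, after lifting to $\H^{d-1}_\m((R/I_D)^+)$ and checking Frobenius stability there, a second application to kill that lift. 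The big Cohen--Macaulay property of $(R/I_D)^+$ is used not to make degree $d-1$ vanish (it does not), but to ensure the connecting map $\H^{d-1}_\m((R/I_D)^+)\to \H^d_\m(I_D^+)$ is injective so the diagram chase closes. Since this work has already been packaged into Corollary~\ref{test ideal finite extension}, the cleaner move is simply to cite it and dualize.
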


\begin{proof}
By Corollary \ref{test ideal finite extension}, there exists a module-finite extension $R_\lambda \in \Lambda$ such that $f^{1/r} \in R_{\lambda}$ and 
\[
\tau_{D}(R, D+\Delta) =\mathrm{Im}(\Tr_{\pi_\mu}: {\pi_{\mu}}_*\sO_{X_{\mu}}(K_{X_\mu}-\pi_{\mu}^*(K_X+D+\Delta)+D_{\mu}) \to \sO_X)
\]
holds for all $R_{\mu} \in \Lambda$ containing $R_{\lambda}$. 
Taking its Matlis dual, one has 
	\[
0_{H_{\m}^d(\omega_R)}^{*_D(D+\Delta)}=\ker \left(H_{\m}^d(\omega_R)\xrightarrow{\cdot f^{\frac{1}{r}}} H_{\m}^d(I_{D_\lambda})\right).
	\]
Then taking its direct limit yields the desired equality 
	\[
		0_{H_{\m}^d(\omega_R)}^{*_D(D+\Delta)}=\ker \left(H_{\m}^d(\omega_R)\xrightarrow{\cdot f^{\frac{1}{r}}} H_{\m}^d(I_D^+)\right).
	\]
\end{proof}
\begin{lem}\label{Lemma for BCM adjoint ideals}
	In the setting of Proposition \ref{divisorial test ideal equals BCM adjoint ideal},
	\[
		H_{\m}^d(\omega_R\otimes_R I_D^+(D+\Delta))\xrightarrow{\cdot f^{\frac{1}{r}}}H_\m^d(I_D^+)
	\]
	is an isomorphism.
\end{lem}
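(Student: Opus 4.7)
The strategy is to exploit the filtered-colimit descriptions $I_D^+ = \varinjlim_\lambda R_\lambda(-D_\lambda)$ and $I_D^+(D+\Delta) = \varinjlim_\lambda R_\lambda(M_\lambda)$ with $M_\lambda := \lfloor \pi_\lambda^*(D+\Delta) - D_\lambda \rfloor$, and, after restricting the colimit to a cofinal subsystem on which things are well-behaved, to reduce the lemma to an isomorphism at each individual finite level. Let $\Lambda_0 \subseteq \Lambda$ denote the subsystem of $R_\lambda$ that contain $f^{1/r}$ and satisfy the additional property that $\pi_\lambda^* \Delta$ is an integer Weil divisor on $X_\lambda$; the latter can be arranged by adjoining $B$-th roots of chosen uniformizers of the components of $\Delta$ (where $B$ is a common denominator of the coefficients of $\Delta$), so $\Lambda_0$ is cofinal in $\Lambda$. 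Since $\H^d_\m(-)$ and $\omega_R \otimes_R -$ both commute with filtered colimits, it suffices to show that for every $R_\lambda \in \Lambda_0$, multiplication by $f^{1/r}$ induces an isomorphism
\[
\H^d_\m\bigl(\omega_R \otimes_R R_\lambda(M_\lambda)\bigr) \xrightarrow{\sim} \H^d_\m\bigl(R_\lambda(-D_\lambda)\bigr).
\]

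The main technical step is to compare $\omega_R \otimes_R R_\lambda(M_\lambda)$, via the canonical map to $K(R_\lambda)$, with the rank-one reflexive $R_\lambda$-module $R_\lambda(\pi_\lambda^* K_X + M_\lambda)$. I would factor the comparison as
\[
\omega_R \otimes_R R_\lambda(M_\lambda) \twoheadrightarrow \omega_R \cdot R_\lambda(M_\lambda) \hookrightarrow R_\lambda(\pi_\lambda^* K_X + M_\lambda)
\]
and argue that the kernel of the surjection and the cokernel of the inclusion are both supported in codimension $\geq 2$ in $R_\lambda$. To see this, localize at any height-one prime $Q$ of $R_\lambda$: then $Q \cap R$ has height one in $R$, and $(\omega_R)_{Q \cap R}$ is free of rank one over the DVR $R_{Q \cap R}$ (since $\omega_R$ is a divisorial module over the normal ring $R$), which forces both maps to become isomorphisms after localization at $Q$. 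Since modules of dimension $\leq d - 2$ have vanishing $\H^{d-1}_\m$ and $\H^d_\m$, the long exact sequences yield the isomorphism
\[
\H^d_\m\bigl(\omega_R \otimes_R R_\lambda(M_\lambda)\bigr) \cong \H^d_\m\bigl(R_\lambda(\pi_\lambda^* K_X + M_\lambda)\bigr).
\]

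For $R_\lambda \in \Lambda_0$, the Weil-divisor pullback $\pi_\lambda^* \Delta$ is integer, so $M_\lambda = \pi_\lambda^*(D + \Delta) - D_\lambda$ and consequently $\pi_\lambda^* K_X + M_\lambda = \pi_\lambda^*(K_X + D + \Delta) - D_\lambda = \Div(f^{1/r}) - D_\lambda$, where the last equality uses that $K_X + D + \Delta$ is $\Q$-Cartier with $r$-th multiple realized by $f$ and that $f^{1/r} \in R_\lambda$. Multiplication by $f^{1/r}$ is then a literal isomorphism of fractional ideals $R_\lambda(\Div(f^{1/r}) - D_\lambda) \xrightarrow{\sim} R_\lambda(-D_\lambda)$, which provides the required isomorphism on $\H^d_\m$. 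The main obstacle I anticipate is the codimension-$\geq 2$ bookkeeping at the tensor-product step: one must carefully verify that the generic local principality of $\omega_R$ at codimension-one points of $R$ really does propagate to principality at the corresponding codimension-one points of the extension $R_\lambda$, and that the two agreements at height-one primes suffice to produce the local-cohomology isomorphism despite the possible non-reflexivity of $\omega_R \otimes_R R_\lambda(M_\lambda)$.
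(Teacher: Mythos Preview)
Your proof is correct and takes essentially the same approach as the paper's: restrict to a cofinal subsystem containing $f^{1/r}$, commute $\H^d_\m$ and $\omega_R\otimes_R(-)$ with the filtered colimit, use a codimension-$\geq 2$ comparison to identify $\H^d_\m(\omega_R \otimes_R R_\lambda(M_\lambda))$ with $\H^d_\m(R_\lambda(\pi_\lambda^*(K_X+D+\Delta)-D_\lambda))$, and conclude via multiplication by $f^{1/r}$. One small simplification: your extra requirement that $\pi_\lambda^*\Delta$ be integral is automatic once $f^{1/r}\in R_\lambda$, since finite pullback of $\Q$-Weil divisors is additive and $\pi_\lambda^*(K_X+D+\Delta)=\Div_{R_\lambda}(f^{1/r})$, $\pi_\lambda^*K_X$, and $\pi_\lambda^*D$ are all integral.
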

\begin{proof}
We consider only $R_{\lambda} \in \Lambda$ such that $f^{\frac{1}{r}} \in R_{\lambda}$, that is, $\pi_{\lambda}^*(K_X+D+\Delta)$ is a Cartier divisor on $\Spec R_{\lambda}$.  
The natural injection
	\[
	\omega_{R}\otimes_R R_{\lambda}(\pi_\lambda^*(D+\Delta)-D_\lambda) \hookrightarrow R_\lambda(\pi_\lambda^*(K_X+D+\Delta)-D_\lambda).
	\]
is an isomorphism on the regular locus of $X$, that is, an isomorphism in codimension one, which yields an isomorphism 
\[
    H_\m^d(\omega_{R}\otimes_R R_{\lambda}(\pi_\lambda^*(D+\Delta)-D_\lambda)) \cong H_\m^d(R_\lambda(\pi_\lambda^*(K_X+D+\Delta)-D_\lambda)).
\]
Here, we used that fact that if $M$ is a finitely generated $R$-module supported on a set of codimension $\ge 2$, then $H_\m^d(M)=H_\m^{d-1}(M)=0$. 
Therefore, we have
\begin{align*}
		H_\m^d(\omega_R\otimes_R I_D^+(D+\Delta))
		& \cong  \varinjlim_{R_{\lambda}}H_\m^d(\omega_R\otimes_R R_\lambda(\pi_\lambda^*(D+\Delta)-D_\lambda)) \\
		& \cong  \varinjlim_{R_{\lambda}} H_\m^d(R_\lambda(\pi_\lambda^*(K_X+D+\Delta)-D_\lambda)) \\
		& \cong  H_\m^d(\varinjlim_{R_{\lambda}}(R_\lambda(\pi_\lambda^*(K_X+D+\Delta)-D_\lambda))) \\
		& \cong  H_\m^d(I_D^+\otimes_{R_{\lambda}}R_{\lambda}(\Div f^{\frac{1}{r}})) \\
		& \cong  H_\m^d(I_D^{+}), 
\end{align*}
	where the last isomorphism is induced by the multiplication by $f^{\frac{1}{r}}$. 
\end{proof}

\begin{prop}\label{finitistic divisorial test ideal}
	With notation as in Proposition \ref{divisorial test ideal equals BCM adjoint ideal}, we have
	\[
		\tau_{D}(R,D+\Delta)=\bigcap_{J} (J:J^{+_D(D+\Delta)}),
	\]
	where $J$ runs through all ideals of $R$.
\end{prop}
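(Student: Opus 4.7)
The plan is to prove the two containments separately. The inclusion $\tau_D(R,D+\Delta) \subseteq \bigcap_J (J:J^{+_D(D+\Delta)})$ should follow directly from the trace characterization of the test ideal (Corollary \ref{test ideal finite extension}(2)), while the reverse containment will come from the local-cohomology description of tight closure provided by Proposition \ref{divisorial test ideal equals BCM adjoint ideal} combined with Lemma \ref{Lemma for BCM adjoint ideals}.

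For the easy direction, I would fix $c \in \tau_D(R,D+\Delta)$, and by Corollary \ref{test ideal finite extension}(2) (after enlargement) write $c = \Tr_{\pi_\mu}(z)$ for some $R_\mu \in \Lambda$ containing $f^{1/r}$ (where $r(K_X+D+\Delta)=\Div f$) and some $z \in R_\mu(K_{X_\mu}-\pi_\mu^*(K_X+D+\Delta)+D_\mu)$. For any ideal $J$ and any $x \in J^{+_D(D+\Delta)}$, after further enlargement I can write $x = \sum_i a_i s_i$ with $a_i \in J$ and $s_i \in R_\mu(\lfloor \pi_\mu^*(D+\Delta)\rfloor - D_\mu)$. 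Using $\lfloor \pi_\mu^*(D+\Delta)\rfloor \le \pi_\mu^*(D+\Delta)$ and the integrality of $\pi_\mu^*(K_X+D+\Delta)$, each product $zs_i$ lies in $R_\mu(K_{X_\mu}-\pi_\mu^*K_X)$, so $\Tr_{\pi_\mu}(zs_i) \in R$, and thus $cx = \sum_i a_i \Tr_{\pi_\mu}(zs_i) \in J$.

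For the reverse containment, the main step is to establish, by Matlis-dualizing Corollary \ref{test ideal finite extension}(2) and combining with Lemma \ref{Lemma for BCM adjoint ideals}, the equality
\[
\tau_D(R,D+\Delta) = \Ann_R 0_{H^d_\m(\omega_R)}^{*_D(D+\Delta)} = \Ann_R \ker\bigl(H^d_\m(\omega_R) \to H^d_\m(\omega_R \otimes_R I_D^+(D+\Delta))\bigr).
\]
I would then fix a canonical divisor $K_X$ with $-K_X-D$ effective so that $\omega_R = R(K_X) \subseteq R$, take $c \in \bigcap_J (J:J^{+_D(D+\Delta)})$, and represent a class in the kernel as $\xi = [\eta/\mathbf{x}^n]$ with $\eta \in \omega_R$ and $\mathbf{x}=(x_1,\ldots,x_d)$ a system of parameters. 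The direct-limit description $I_D^+(D+\Delta) = \varinjlim_\lambda R_\lambda(\lfloor \pi_\lambda^*(D+\Delta)\rfloor - D_\lambda)$ lets me translate the kernel condition (in the fraction field $K(R_\lambda)$, up to codimension-$\ge 2$ issues) into $\eta \in (\mathbf{x}^n) R_\lambda(\pi_\lambda^*K_X + \lfloor \pi_\lambda^*(D+\Delta)\rfloor - D_\lambda)$ for some $R_\lambda \in \Lambda$. Multiplying by any $h \in R(-K_X) = \Hom_R(\omega_R, R)$ yields $\eta h \in R \cap (\mathbf{x}^n) I_D^+(D+\Delta) = (\mathbf{x}^n)^{+_D(D+\Delta)}$, so the hypothesis on $c$ forces $c\eta h \in (\mathbf{x}^n)$ for every such $h$. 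Reflexivity then forces $c\eta \in (\mathbf{x}^N)\omega_R$ for some $N$, giving $c\xi = 0$ in $H^d_\m(\omega_R)$ and hence $c \in \tau_D(R,D+\Delta)$.

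The main technical obstacle is the divisorial bookkeeping in the reverse direction: the identification of the module-theoretic kernel in $H^d_\m(\omega_R \otimes_R I_D^+(D+\Delta))$ with the $R$-ideal plus closure $(\mathbf{x}^n)^{+_D(D+\Delta)}$ requires careful tracking of reflexive hulls of the tensor products $\omega_R \otimes_R R_\lambda(\cdots)$ against the fractional ideals $R_\lambda(\pi_\lambda^*K_X + \cdots)$, and the final passage from ``$c\eta \cdot R(-K_X) \subseteq (\mathbf{x}^n)$'' to ``$c\eta \in (\mathbf{x}^N)\omega_R$'' relies on the fact that local cohomology is insensitive to codimension-$\ge 2$ modifications. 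Verifying that $\tau_D(R, D+\Delta) = \Ann_R 0_{H^d_\m(\omega_R)}^{*_D(D+\Delta)}$ in the (possibly non-Cohen-Macaulay) setting of the paper also deserves explicit attention.
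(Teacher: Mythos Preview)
Your proposal takes a substantially different route from the paper's, and the reverse containment has a genuine gap.

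The paper's proof is very short: it combines Proposition~\ref{divisorial test ideal equals BCM adjoint ideal} and Lemma~\ref{Lemma for BCM adjoint ideals} to obtain the single equality
\[
0_{\H_\m^d(\omega_R)}^{*_D(D+\Delta)}
=\ker\bigl(\H_\m^d(\omega_R)\to \H_\m^d(\omega_R)\otimes_R I_D^+(D+\Delta)\bigr)
=0_{\H_\m^d(\omega_R)}^{+_D(D+\Delta)},
\]
and then, since $\H_\m^d(\omega_R)\cong E_R(R/\m)$ and $R$ is approximately Gorenstein, it invokes the standard machinery (\cite[Proposition~8.23]{HH90}, \cite[Proposition~3.3.1(4)]{DT}) to conclude $\Ann_R 0_E^{+_D(D+\Delta)}=\bigcap_J(J:J^{+_D(D+\Delta)})$. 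The point is that writing $E=\varinjlim R/\mathfrak{q}_t$ for irreducible $\m$-primary ideals $\mathfrak{q}_t$ reduces everything to cyclic modules immediately, and no divisorial bookkeeping between $\omega_R$ and $R$ is ever needed.

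Your reverse containment instead works with the \v{C}ech presentation of $\H_\m^d(\omega_R)$ and tries to pass through $R(-K_X)$ to land in plus closures of parameter ideals of $R$. The step you flag as the obstacle is in fact a real gap: from ``$c\eta h\in(\mathbf{x}^n)$ for every $h\in R(-K_X)$'' you only obtain $\mathfrak{c}\cdot[c\eta/\mathbf{x}^n]=0$ in $E$, where $\mathfrak{c}=R(-K_X)\cdot R(K_X)$ is an ideal of height $\ge 2$. But every nonzero element of the injective hull $E$ has $\m$-primary annihilator, so ``annihilated by a height-$\ge 2$ ideal'' does not force vanishing; your appeal to ``local cohomology is insensitive to codimension-$\ge 2$ modifications'' does not apply here. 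Your easy direction via the trace description is plausible, though the claim $\Tr_{\pi_\mu}(zs_i)\in R$ also deserves a more careful check (one must verify that $R_\mu(K_{X_\mu}-\pi_\mu^*K_X)$ lands in $R$ under $\Tr_{\pi_\mu}$, which is not automatic when $K_X$ is not $\Q$-Cartier). The cleanest fix is simply to adopt the paper's strategy: once you have $0^{*_D}_E=0^{+_D}_E$, the approximately Gorenstein argument gives both containments at once.
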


\begin{proof}
By Proposition \ref{divisorial test ideal equals BCM adjoint ideal} and Lemma \ref{Lemma for BCM adjoint ideals}, 
\begin{eqnarray*}
		0_{H_{\m}^d(\omega_R)}^{*_D(D+\Delta)} &=& \ker(H_{\m}^d(\omega_R)\to H_{\m}^d(\omega_R\otimes_R I_D^+(D+\Delta))) \\
		&=& \ker(H_\m^d(\omega_R)\to H_{\m}^d(\omega_R)\otimes_R I_D^+(D+\Delta)) \\
		&=& 0_{H_\m^d(\omega_R)}^{+_D(D+\Delta)},  
\end{eqnarray*}
where the second equality follows from a general fact that if $M$ and $N$ are modules over a $n$-dimensional Noetherian local ring $(A, \m_A)$, there exists a natural isomorphism $H^n_{\m_A}(M\otimes_A N)\cong H^n_{\m_A}(M)\otimes_A N$.
Since $R$ is excellent and reduced, the assertion follows from an argument very similar to \cite[Proposition 3.3.1 (4)]{DT} and \cite[Proposition 8.23]{HH90}. 
Note that locally excellent Noetherian reduced rings are approximately Gorenstein (see \cite{Hoc77}), which allows us to apply an argument similar to loc.~cit. 
\end{proof}

%%%%%%%%%%%%%%%%%%%%%%%%%%%%%%%%%%%%%%%%%%%%%%%%%%%%%%%%%%%%%%%%%%%%%%%%%%%%%%
\section{A characterization of adjoint ideals via ultraproducts}
%%%%%%%%%%%%%%%%%%%%%%%%%%%%%%%%%%%%%%%%%%%%%%%%%%%%%%%%%%%%%%%%%%%%%%%%%%%%%%
In this section, we give a characterization of the adjoint ideal $\adj_D(X, D+\Delta)$ via ultraproducts when $K_X+D+\Delta$ is $\Q$-Cartier. 
We work with the following setting. 

\begin{setting}\label{Setting of equal char.0}
Let $(R,\m)$ be a $d$-dimensional normal local domain essentially of finite type over $\C$, 
$\Delta$ be an effective $\Q$-Weil divisor and $D$ be a prime divisor on $X:=\Spec R$ such that no component of $\Delta$ is equal to $D$. Fix a non-principal ultrafilter $\mathcal{F}$ on $\mathcal{P}$ and a field isomorphism $\ulim_p \overline{\F_p}\cong \C$.
Let $(R_p)_{p \in \mathcal{P}}$, $(D_p)_{p \in \mathcal{P}}$ and $(\Delta_p)_{p \in \mathcal{P}}$ be approximations of $R$, $D$ and $\Delta$, respectively. 
Fix choices of $I_{D_p}^+$, which is equivalent to fixing local ring homomorphisms $R_p^+ \to (R_p/I_{D_p})^+$, for almost all $p$. 
\end{setting}

First we generalize Schoutens' ``canonical" big Cohen-Macaulay algebras $\B(R)$ to the pair setting. 
\begin{defn}\label{Beta}
With notation as in Setting \ref{Setting of equal char.0}, the $R^+$-algebra $\B(R)$ is defined as 
\[
\B(R)=\ulim_p R_p^+.
\]
The $\B(R)$-modules $\B(I_D)$ and $\B(I_D,D+\Delta)$ are defined as
\[
\B(I_D)=\ulim_p (I_{D_p}^+), \quad \B(I_D,D+\Delta)=\ulim_p (I_{D_p}^+(D_p+\Delta_p)), 
\]
respectively. 
\end{defn}

\begin{rem}\label{remark non-standard}
Definition \ref{Beta} is an abuse of notation since $\B(I_D)$ and $\B(I_D,D+\Delta)$ depend on the choices of $(I_{D_p}^+)_{p \in \mathcal{P}}$ and are not uniquely determined by $I_D$ and $D+\Delta$. 
If $\sigma: \B(R) \to \B(R/I_D)$ is the homomorphism induced by the fixed local ring homomorphisms $(R_p)^+\to (R_p/I_{D_p})^+$, then
	\[
	0\to \B(I_D) \to \B(R) \xrightarrow{\sigma} \B(R/I_D) \to 0
	\]
is an exact sequence. 
\end{rem}

We define a closure operation in equal characteristic zero, using $\B(I_D,D+\Delta)$. 
\begin{defn}
Let the notation be as in Setting \ref{Setting of equal char.0}. 
\begin{enumerate}
\item Given an ideal $J \subseteq R$, the ideal $J^{\B_D(D+\Delta)} \subseteq R$ is defined to be 
$J \B(I_D,D+\Delta)\cap R$. 
\item Given an $R$-module $M$, the submodule $0_{M}^{\B_D(D+\Delta)}$ is defined to be the kernel of the natural map $M\to M\otimes_{R}\B(I_D,D+\Delta)$. 
\item The following ideals are equal to each other (cf.~\cite[Proposition 8.23]{HH90} and \cite[Proposition 3.3.1]{DT}), and are collectively denoted by $\tau_{\B, D}(R, D+\Delta)$. 
\begin{enumerate}
\item $\bigcap_M \Ann_R 0_M^{\B_D(D+\Delta)}$, where $M$ runs through all $R$-modules. 
\item $\Ann_R 0_E^{\B_D(D+\Delta)}$, where $E=E_R(R/\m)$ is an injective hull of the residue field $R/\m$. 
\item $\bigcap_{J} (J:J^{\B_D(D+\Delta)})$, where $J$ runs through all ideals of $R$.
\end{enumerate}
 \end{enumerate}
\end{defn}

In order to prove the main theorem in this section, we need the following two lemmas. 

\begin{lem}\label{approximation of adjoint ideals}
With notation as in Setting \ref{Setting of equal char.0}, if $K_X+D+\Delta$ is $\Q$-Cartier, then $(\tau_{D_p}(R_p,D_p+\Delta_p))_{p \in \mathcal{P}}$ is an approximation of the adjoint ideal $\adj_D(X,D+\Delta)$. 
\end{lem}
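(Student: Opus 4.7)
The plan is to reduce this assertion to Takagi's characteristic-$p$ description of adjoint ideals in \cite{Tak08} via log resolutions, and then transport it into the ultraproduct framework.

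First, I would fix a log resolution $\pi:\widetilde{X}\to X$ of $(X,D+\Delta)$ such that the strict transform $\pi^{-1}_*D$ is smooth, so that
\[
\adj_D(X,D+\Delta)=\pi_*\sO_{\widetilde{X}}\bigl(K_{\widetilde{X}}-\lfloor \pi^*(K_X+D+\Delta)\rfloor+\pi^{-1}_*D\bigr).
\]
I would then choose compatible approximations of all relevant data: $X$, $D$, $\Delta$, $K_X$, $\widetilde X$, $\pi$, the strict transform $\pi^{-1}_*D$, and a canonical divisor $K_{\widetilde X}$ compatible with $\pi$. Since $\Q$-Cartierness and projectivity descend under approximations, and smoothness is an open condition, for almost all $p$ the approximation $\pi_p:\widetilde{X}_p\to X_p$ is a log resolution of $(X_p,D_p+\Delta_p)$, $(\pi_p)^{-1}_*D_p$ is smooth, and $K_{X_p}+D_p+\Delta_p$ is $\Q$-Cartier of the same index.

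Next, I would show that the formation of the adjoint sheaf commutes with approximation. The vanishing $R^i\pi_*\sO_{\widetilde{X}}(K_{\widetilde{X}}-\lfloor \pi^*(K_X+D+\Delta)\rfloor+\pi^{-1}_*D)=0$ for $i>0$ (relative Kawamata--Viehweg vanishing, using that $\pi^{-1}_*D$ is a reduced snc component with coefficient one) persists after approximation for almost all $p$. Combined with faithful flatness of $R\hookrightarrow R_\infty$ and the commutation of coherent pushforwards with ultraproducts (cf.~\cite[Section 3]{Yam}), this yields an equality of approximations
\[
\bigl(\adj_D(X,D+\Delta)\bigr)_p
=(\pi_p)_*\sO_{\widetilde{X}_p}\bigl(K_{\widetilde{X}_p}-\lfloor \pi_p^*(K_{X_p}+D_p+\Delta_p)\rfloor+(\pi_p)^{-1}_*D_p\bigr)
\]
for almost all $p$, where the right-hand side is, by definition, the characteristic-$p$ adjoint ideal $\adj_{D_p}(X_p,D_p+\Delta_p)$ computed via $\pi_p$.

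Finally, I would invoke \cite[Theorem 3.2]{Tak08}, which asserts that for all sufficiently large $p$ the adjoint ideal $\adj_{D_p}(X_p,D_p+\Delta_p)$ coincides with the test ideal $\tau_{D_p}(R_p,D_p+\Delta_p)$ along $D_p$. Since the exceptional set of primes is finite and no non-principal ultrafilter contains a finite subset of $\mathcal{P}$, the equality holds for almost all $p$, which gives the claim. The main technical obstacle is Step 2: carefully checking that taking $\pi_*$ of the prescribed twisted sheaf, together with the rounding and strict-transform operations, is compatible with approximations. This is essentially the adjoint analogue of the multiplier-ideal case treated in \cite{Yam}; the extra input needed is preservation of smoothness of $\pi^{-1}_*D$ and of the snc condition on $\pi_p^{-1}_*D_p + \mathrm{Exc}(\pi_p) + (\pi_p)^{-1}_*\Delta_p$ for almost all $p$, both of which are open conditions.
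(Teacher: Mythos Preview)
Your proposal is correct and takes essentially the same approach as the paper: establish that the adjoint ideal commutes with passage to characteristic $p$ via a log resolution, then invoke \cite{Tak08} for the coincidence with the test ideal. The paper's proof is more compressed, citing \cite{Tak08} for the reduction-mod-$p$ statement and then \cite[Proposition~5.5]{Yam} for the transfer from the model framework (general closed points $\mu\in\Spec A$) to the ultraproduct approximation framework; your final step, where you assert that ``the exceptional set of primes is finite'', is precisely where this second ingredient is implicitly needed, since \cite{Tak08} does not literally speak about the approximations $(R_p)_{p\in\mathcal{P}}$.
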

\begin{proof}
Suppose we are given a model over a finitely generated $\Z$-subalgebra  $A$ of $\C$ (see Section   \ref{faithfully flat section} for the definition of models and related notation). 
When $K_X+D+\Delta$ is $\Q$-Cartier, it follows from essentially the same argument as the proof of \cite[Theorem 5.3]{Tak08} that 
%modulo $p$ reductions of the adjoint ideal $\adj_D(X,D+\Delta)$ coincide with the test ideals $\tau_{D_{\mu}}(R_{\mu},D_{\mu}+\Delta_{\mu})$, where $(R_{\mu}, D_{\mu}, \Delta_{\mu})$ are modulo $p$ reductions of $(R, D, \Delta)$,  
\[\adj_D(X,D+\Delta)_{\mu}=\tau_{D_{\mu}}(X_{\mu}, D_{\mu}+\Delta_{\mu})\]
for general closed points $\mu \in \Spec A$. 
Then we obtain the assertion by an argument similar to \cite[Proposition 5.5]{Yam}. 
\end{proof}

\begin{lem}\label{lemma local cohomology injectivity}
With notation as in Setting \ref{Setting of equal char.0}, the natural map 
\[
\beta_D: H_\m^d(\B(I_D)) \to \ulim_p H_{\m_p}^d(I_{D_p}^+)
\]
is injective.
\end{lem}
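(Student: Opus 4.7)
The plan is to choose a system of parameters $x_1, \dots, x_d$ of $R$ with $x_1 \in I_D$ such that $\bar{x}_2, \dots, \bar{x}_d$ is a system of parameters of $R/I_D$ (possible since $I_D$ is a height-one prime with $\dim R/I_D = d-1$), and to reduce the problem to two simpler injectivity statements via the short exact sequence $0 \to \B(I_D) \to \B(R) \to \B(R/I_D) \to 0$ from Remark~\ref{remark non-standard}, together with the analogous sequences $0 \to I_{D_p}^+ \to R_p^+ \to (R_p/I_{D_p})^+ \to 0$ for each $p$. By Schoutens' theorem applied to both $R$ and to $R/I_D$ (the latter being a local domain essentially of finite type over $\C$ because $I_D$ is prime), and by Hochster--Huneke for $R_p$ and $R_p/I_{D_p}$, all four algebras $\B(R)$, $\B(R/I_D)$, $R_p^+$, $(R_p/I_{D_p})^+$ are big Cohen--Macaulay over their respective base rings. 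Consequently, the relevant $\H_\m^i$ vanish in low degrees; moreover $\H_\m^d(\B(R/I_D)) = 0$ and $\H_{\m_p}^d((R_p/I_{D_p})^+) = 0$, because these modules are supported on quotients of dimension $d-1$.

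The long exact sequences in local cohomology therefore collapse to a commutative diagram with exact rows
\[
\xymatrix@C=0.6em{
0 \ar[r] & \H_\m^{d-1}(\B(R/I_D)) \ar[r] \ar[d]^{\alpha} & \H_\m^d(\B(I_D)) \ar[r] \ar[d]^{\beta_D} & \H_\m^d(\B(R)) \ar[r] \ar[d]^{\gamma} & 0 \\
0 \ar[r] & \ulim_p \H_{\m_p}^{d-1}((R_p/I_{D_p})^+) \ar[r] & \ulim_p \H_{\m_p}^d(I_{D_p}^+) \ar[r] & \ulim_p \H_{\m_p}^d(R_p^+) \ar[r] & 0,
}
\]
where the bottom row is exact because ultraproducts preserve exactness of short exact sequences. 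A short diagram chase (equivalently, the $4$-lemma) then reduces the injectivity of $\beta_D$ to the injectivity of $\alpha$ and $\gamma$.

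For $\gamma$ (the argument for $\alpha$ is entirely analogous, with $\bar{x}_2, \dots, \bar{x}_d$ in place of $x_1, \dots, x_d$), the key observation is that on a big Cohen--Macaulay module $M$ over $R$ with regular sequence $x_1, \dots, x_d$, a \v{C}ech class $[b/(x_1\cdots x_d)^n]$ vanishes in $\H_\m^d(M)$ if and only if $b \in (x_1^n, \dots, x_d^n)M$, by the colon relations for regular sequences. So suppose $\xi = [b/(x_1\cdots x_d)^n] \in \H_\m^d(\B(R))$ is killed by $\gamma$; choosing an approximation $b = \ulim_p b_p$ with $b_p \in R_p^+$, the vanishing of the image forces $b_p \in (x_{1,p}^n, \dots, x_{d,p}^n)R_p^+$ for almost all $p$ by the Cohen--Macaulayness of $R_p^+$. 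Writing $b_p = \sum_{i=1}^d x_{i,p}^n c_{i,p}$ and setting $c_i := \ulim_p c_{i,p} \in \B(R)$ yields $b = \sum_{i=1}^d x_i^n c_i$, and the Cohen--Macaulayness of $\B(R)$ then forces $\xi = 0$. The main obstacle I anticipate is the bookkeeping required to set up the diagram rigorously: one must verify that the chosen $x_{1,p}$ lies in $I_{D_p}$ and that $(\bar{x}_{2,p}, \dots, \bar{x}_{d,p})$ is a system of parameters of $R_p/I_{D_p}$ for almost all $p$, and that the \v{C}ech-level maps commute with taking ultraproducts in the way required to produce the displayed diagram from the two short exact sequences.
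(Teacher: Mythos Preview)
Your proposal is correct and follows essentially the same approach as the paper: both use the short exact sequence $0 \to \B(I_D) \to \B(R) \to \B(R/I_D) \to 0$, the big Cohen--Macaulay property of all four algebras in play, and the resulting commutative diagram with short exact rows to reduce the injectivity of $\beta_D$ to that of $\alpha$ and $\gamma$. The only difference is that the paper invokes \cite[Lemma~4.7]{Yam} for the injectivity of $\alpha$ and $\gamma$, whereas you spell out a direct \v{C}ech argument; your argument is essentially a reproof of that lemma. (One small remark: in your last step, once $b \in (x_1^n,\dots,x_d^n)\B(R)$ the vanishing of $[b/(x_1\cdots x_d)^n]$ is automatic and does not require the Cohen--Macaulayness of $\B(R)$.)
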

\begin{proof}
As mentioned in Remark \ref{remark non-standard}, the exact sequences 
\[
0 \to I_{D_p}^+ \to R_p^+ \to (R_p/I_{D_p})^+ \to 0
\]	
for almost all $p$ induce the exact sequence 
\[
0 \to \B(I_D) \to \B(R) \to \B(R/I_D) \to 0.
\]
Note that $R_p^+$ and $(R_p/I_{D_p})^+$ are big Cohen-Macaulay algebras for almost all $p$ by \cite{HH92} and 
that $\B(R)$ and $\B(R/I_D)$ are big Cohen-Macaulay $R^+$-algebras by \cite{Sch04}.  
Thus, we have the following  commutative diagram with exact rows: 
\[
\xymatrix{
0 \ar[r] & H_\m^{d-1}(\B(R/I_D)) \ar[r] \ar[d]^{\alpha} & H_\m^d(\B(I_D)) \ar[d]^{\beta_D} \ar[r] & H_\m^d(\B(R)) \ar[r] \ar[d]^{\gamma} & 0 \\
0 \ar[r]& \ulim_p H_{\m_p}^{d-1}((R_p/I_{D_p})^+) \ar[r] & \ulim_p H_{\m_p}^d(I_{D_p}^+) \ar[r] & \ulim_pH_{\m_p}^d(R_p^+) \ar[r] & 0.
}
\]
Since $\alpha$ and $\gamma$ are injective by \cite[Lemma 4.7]{Yam}, 
so is the homomorphism $\beta_D$.
\end{proof}

The main result in this section is now stated as follows. 
\begin{thm}\label{BCM ajoint ideal equals adjoint ideal}
With notation as in Setting \ref{Setting of equal char.0}, if $K_X+D+\Delta$ is $\Q$-Cartier, then 
\[
0^{\B_D(D+\Delta)}_{H^d_{\m}(\omega_R)}=\Ann_{H^d_{\m}(\omega_R)} \adj_D(X, D+\Delta). 
\]
Taking the annihilator of both sides in $R$ yields the equality 
\[
\tau_{\B, D}(R,D+\Delta) =\adj_D(X,D+\Delta).
\]
\end{thm}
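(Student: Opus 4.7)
The plan is to reduce to characteristic $p$ via approximations, show that the family $(\tau_{D_p}(R_p,D_p+\Delta_p))_p$ is also an approximation of $\tau_{\B,D}(R,D+\Delta)$, and then invoke Lemma~\ref{approximation of adjoint ideals} to conclude the equality with $\adj_D(X,D+\Delta)$.

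I would first reduce to the case where $(R,\m)$ is complete, which is possible because the formations of $\adj_D(X,D+\Delta)$ and of $\tau_{\B,D}(R,D+\Delta)$ both commute with $\m$-adic completion (the latter since the defining data $E_R(R/\m)$ and the ultraproduct $\B(I_D,D+\Delta)$ are insensitive to completion). Next, writing $r(K_X+D+\Delta)=\Div f$ with $r\ge 1$ and $f\in R$ and fixing an approximation $(f_p)_p$ of $f$, Proposition~\ref{divisorial test ideal equals BCM adjoint ideal} gives
\[
0^{*_{D_p}(D_p+\Delta_p)}_{\H^d_{\m_p}(\omega_{R_p})}=\ker\bigl(\H^d_{\m_p}(\omega_{R_p})\xrightarrow{\cdot f_p^{1/r}}\H^d_{\m_p}(I^+_{D_p})\bigr)
\]
for almost all $p$. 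A codimension-one argument parallel to the proof of Lemma~\ref{Lemma for BCM adjoint ideals}, together with the commutation of $\H^d_\m(-)$ with the directed colimit defining $\B(I_D,D+\Delta)$, yields the characteristic-zero analogue
\[
0^{\B_D(D+\Delta)}_{\H^d_\m(\omega_R)}=\ker\bigl(\H^d_\m(\omega_R)\xrightarrow{\cdot f^{1/r}}\H^d_\m(\B(I_D))\bigr).
\]

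I would then consider the commutative diagram
\[
\xymatrix{
\H^d_\m(\omega_R) \ar[r]^-{\cdot f^{1/r}} \ar[d]_-{\alpha} & \H^d_\m(\B(I_D)) \ar@{^{(}->}[d]^-{h}\\
\ulim_p \H^d_{\m_p}(\omega_{R_p}) \ar[r]^-{\ulim_p \cdot f_p^{1/r}} & \ulim_p \H^d_{\m_p}(I^+_{D_p})
}
\]
whose right vertical $h$ is injective by Lemma~\ref{lemma local cohomology injectivity}. Chasing the diagram, an element $\xi\in \H^d_\m(\omega_R)$ lies in $0^{\B_D(D+\Delta)}_{\H^d_\m(\omega_R)}$ if and only if its image in $\H^d_\m(\B(I_D))$ vanishes, equivalently (by injectivity of $h$) if and only if the image of $\alpha(\xi)$ in $\ulim_p \H^d_{\m_p}(I^+_{D_p})$ vanishes, equivalently, if and only if the approximation $(\xi_p)_p$ of $\xi$ satisfies $\xi_p\in 0^{*_{D_p}(D_p+\Delta_p)}_{\H^d_{\m_p}(\omega_{R_p})}$ for almost all $p$. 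Taking $R$-annihilators and using Matlis duality to identify $\Ann_{R_p}0^{*_{D_p}(D_p+\Delta_p)}_{\H^d_{\m_p}(\omega_{R_p})}=\tau_{D_p}(R_p,D_p+\Delta_p)$ in characteristic $p$, and $\Ann_R 0^{\B_D(D+\Delta)}_{\H^d_\m(\omega_R)}=\tau_{\B,D}(R,D+\Delta)$ in characteristic zero, one concludes that $(\tau_{D_p}(R_p,D_p+\Delta_p))_p$ is an approximation of $\tau_{\B,D}(R,D+\Delta)$. Combined with Lemma~\ref{approximation of adjoint ideals} and the faithful flatness of $R\hookrightarrow R_\infty$, this yields $\tau_{\B,D}(R,D+\Delta)=\adj_D(X,D+\Delta)$, and then the submodule-level equality $0^{\B_D(D+\Delta)}_{\H^d_\m(\omega_R)}=\Ann_{\H^d_\m(\omega_R)}\adj_D(X,D+\Delta)$ follows by Matlis duality.

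The main obstacle I anticipate is the injectivity of $h$, which is exactly Lemma~\ref{lemma local cohomology injectivity} and rests on Schoutens' theorem that $\B(R)$ is a big Cohen-Macaulay $R^+$-algebra; without the $\Q$-Cartier hypothesis the element $f^{1/r}$ is not available, and the right-hand side of the diagram cannot even be set up. A secondary subtlety is the characteristic-zero analogue of Lemma~\ref{Lemma for BCM adjoint ideals}, which requires interchanging local cohomology with the directed colimit defining $\B(I_D,D+\Delta)$ and with ultraproducts; this reduces to the corresponding characteristic $p$ facts by ultraproduct arguments parallel to Lemma~\ref{lemma tensor ultraproduct}.
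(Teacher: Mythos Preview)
Your overall strategy is close to the paper's, and the commutative square with the injective right vertical $h$ (Lemma~\ref{lemma local cohomology injectivity}) is exactly the right idea. However, there are two genuine gaps.

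\textbf{First gap: the characteristic-zero analogue of Lemma~\ref{Lemma for BCM adjoint ideals}.} You assert that
\[
0^{\B_D(D+\Delta)}_{\H^d_\m(\omega_R)}=\ker\bigl(\H^d_\m(\omega_R)\xrightarrow{\cdot f^{1/r}}\H^d_\m(\B(I_D))\bigr)
\]
follows from ``a codimension-one argument parallel to Lemma~\ref{Lemma for BCM adjoint ideals}, together with the commutation of $\H^d_\m(-)$ with the directed colimit defining $\B(I_D,D+\Delta)$.'' But $\B(I_D,D+\Delta)=\ulim_p I^+_{D_p}(D_p+\Delta_p)$ is an \emph{ultraproduct} of positive-characteristic direct limits; it is \emph{not} a directed colimit over module-finite extensions of $R$, so neither the codimension-one argument nor the colimit commutation applies. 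The paper resolves this by introducing a module-finite extension $R\hookrightarrow S\subset R^+$ with $f^{1/r}\in S$: the map $\H^d_\m(\omega_R)\to\H^d_\m(\B(I_D))$ then factors through $\H^d_\m(\omega_R\otimes_R S(\pi^*(D+\Delta))\otimes_S\B(I_D))$, and it is at \emph{this} level that the codimension-one argument yields an isomorphism with $\H^d_\m(\B(I_D))$. Since the map to $\H^d_\m(\omega_R\otimes_R\B(I_D,D+\Delta))$ also factors through this $S$-module, one obtains the kernel equality you need. Without this $S$-factorization, only the containment $0^{\B_D(D+\Delta)}\subseteq\ker(\cdot f^{1/r})$ is clear.

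\textbf{Second gap: passing from the submodule equivalence to the ideal equality.} Even granting your biconditional ``$\xi\in 0^{\B_D(D+\Delta)}$ iff $\xi_p\in 0^{*_{D_p}(D_p+\Delta_p)}$ for almost all $p$,'' this does \emph{not} show that $(\tau_{D_p}(R_p,D_p+\Delta_p))_p$ is an approximation of $\tau_{\B,D}(R,D+\Delta)$. The submodule $0^{*_{D_p}(D_p+\Delta_p)}\subseteq\H^d_{\m_p}(\omega_{R_p})$ may contain elements that do not arise as approximations of any $\xi\in\H^d_\m(\omega_R)$, so knowing $a$ annihilates $0^{\B_D(D+\Delta)}$ does not force $a_p$ to annihilate all of $0^{*_{D_p}(D_p+\Delta_p)}$. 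Your argument gives only $\adj_D(X,D+\Delta)\subseteq\tau_{\B,D}(R,D+\Delta)$ (using injectivity of $\alpha$). The paper handles the two containments separately: for $\adj_D\subseteq\tau_{\B,D}$ it uses Proposition~\ref{finitistic divisorial test ideal} at the level of ideals, while for the reverse containment it takes a log resolution $\mu:Y\to X$ and shows $\ker\delta\subseteq 0^{\B_D(D+\Delta)}$ by transferring membership in $\ker\delta$ to $\ker\delta_p$ via the resolution, then invoking the characteristic-$p$ identifications and the big diagram above.
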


\begin{proof}
First we will prove that $0^{\B_D(D+\Delta)}_{H^d_{\m}(\omega_R)} \subseteq \Ann_{H^d_{\m}(\omega_R)} \adj_D(X, D+\Delta)$. 
It suffices to show that 
$\tau_{\B, D}(R,D+\Delta) \supseteq \adj_D(X,D+\Delta)$, that is, 
$J:J^{\B_D(D+\Delta)} \supseteq \adj_D(R,D+\Delta)$ for every ideal $J \subseteq R$. 
Fix $x\in J^{\B_D(D+\Delta)}$ and $a\in \adj_D(R,D+\Delta)$, and let $(x_p)_{p \in \mathcal{P}}$ and $(a_p)_{p \in \mathcal{P}}$ be approximations of $x$ and $a$, respectively.  
By the definition of $J^{\B_D(D+\Delta)}$, $x_p$ is contained in $J_pI_{D_p}^+(D_p+\Delta_p)$, that is, $x_p \in J_p^{+_{D_p}(D_p+\Delta_p)}$ for almost all $p$. 
On the other hand, by Lemma \ref{approximation of adjoint ideals},  $a_p$ is contained in $\tau_{D_p}(R_p,D_p+\Delta_p)$ for almost all $p$. 
It follows from Proposition \ref{finitistic divisorial test ideal} that $a_p x_p \in J_p$ for almost all $p$, which implies that $a x \in J$. 
Thus, we have the desired containment. 

Next we will prove the reverse containment. 
We may assume that $d \ge 1$. 
Take a log resolution $\mu:Y\to X$ of the pair $(X, D+\Delta)$ and let $Z=\mu^{-1}(\m)$ denote the closed fiber of $\mu$. 
Set 
\[\mathcal{L}=\sO_Y(\mu^*(K_X+D+\Delta)-\mu_*^{-1}D)\] 
and let $\delta:H_\m^d(\omega_R)\to H_Z^d(Y,\mathcal{L})$
be the map induced by the edge maps of the spectral sequence 
$H^p_{\m}(R^q\mu_*\mathcal{L}) \Longrightarrow H^{p+q}_Z(\mathcal{L})$. 
Let $(Y_p)_{p \in \mathcal{P}}$, $(Z_p)_{p \in \mathcal{P}}$ and $(\sL_p)_{p \in \mathcal{P}}$ be approximations of $Y$, $Z$ and $\sL$, respectively. 
Note that one has log resolutions $\mu_p:Y_p\to \Spec R_p$ and $Z_p=\mu_p^{-1}(\m_p)$ for almost all $p$. 
Then we have a commutative diagram
	\[
	\xymatrix@C=40pt{
		& H^{d-1}(X \setminus \{\m\}, \omega_R) \ar[d]^{\gamma} \ar@{->>}[r] & H_\m^d(\omega_R) \ar[d]^{\delta} & \\
		H^{d-1}(Y,\sL)\ar[d] \ar[r] & H^{d-1}(Y\setminus Z,\sL) \ar [r] \ar[d]^-{u^{d-1}} & H_Z^d(Y, \sL) \\
		\ulim_p H^{d-1}(Y_p,\sL_p) \ar[r]^-{\ulim_p\rho_p^{d-1}} & \ulim_p H^{d-1}(Y_p\setminus Z_p,\sL_p),
	}
	\]
where the top horizontal map is surjective and the middle row is exact (see \cite{Sch05} and \cite[Definition 3.67]{Yam} for the definition of $u^{d-1}$).
Similarly, we have the following commutative diagram where the top horizontal map is surjective and the bottom row is exact for almost all $p$:
	\[
	\xymatrix@C=40pt{
		& H^{d-1}(X_p \setminus \{\m_p\}, \omega_{R_p})  \ar[d]^{\gamma_p} \ar@{->>}[r] & H_{\m_p}^d(\omega_{R_p}) \ar[d]^{\delta_p} & \\
		H^{d-1}(Y_p,\sL_p) \ar[r]^-{\rho_p^{d-1}} & H^{d-1}(Y_p\setminus Z_p, \sL_p) \ar[r] & H^d_{Z_p}(Y_p,\sL_p).
	}
	\]

It is enough to show that $\ker \delta \subseteq 0^{\B_D(D+\Delta)}_{H^d_{\m}(\omega_R)}$, because $\delta$ is the Matlis dual of the inclusion $\adj_D(X, D+\Delta) \hookrightarrow R$ and $\ker \delta=\Ann_{H^d_{\m}(\omega_R)} \adj_D(X, D+\Delta)$. 
Suppose $\eta \in \ker \delta$ and take an element $\zeta \in H^{d-1}(X\setminus \{\m\}, \omega_R)$ that maps to $\eta$. 
Let $(\eta_p)_{p \in \mathcal{P}}$ and $(\zeta_p)_{p \in \mathcal{P}}$ be approximations of $\eta$ and $\zeta$, respectively.  
By the commutativity of the first diagram, 
$u^{d-1}(\gamma(\zeta))\in \operatorname{Im}(\ulim_p\rho_p^{d-1})$, 
%$u^{d-1}(\gamma(\zeta))\in \operatorname{Im} \rho_\infty^{d-1}$, 
which implies that $\gamma_p(\zeta_p) \in \operatorname{Im}\rho_p^{d-1}$ for almost all $p$. 
Then, by the commutativity of the second diagram, $\eta_p\in \ker \delta_p$ for almost all $p$. 
Here, note that 
\begin{align*}
\ker \delta_p&=\Ann_{H^d_{\m_p}(\omega_{R_p})} \adj_{D_p}(X_p, D_p+\Delta_p)\\
&=\Ann_{H^d_{\m_p}(\omega_{R_p})} \adj_D(X,D+\Delta)_p\\
&=\Ann_{H^d_{\m_p}(\omega_{R_p})} \tau_{D_p}(R_p, D_p+\Delta_p)\\
&=0_{H_{\m_p}^d(\omega_{R_p})}^{*_{D_p}(D_p+\Delta_p)}\\
&=\ker\left(H_{\m_p}^d(\omega_{R_p})\to H_{\m_p}^d(\omega_{R_p}\otimes_{R_p} I_{D_p}^+(D_p+\Delta_p))\right)
\end{align*}
for almost all $p$, where the third equality follows from the dual form of Lemma \ref{approximation of adjoint ideals}, the fourth from Remark \ref{completion remark} and the fifth from Proposition \ref{divisorial test ideal equals BCM adjoint ideal} and Lemma \ref{Lemma for BCM adjoint ideals}. 
Therefore, the image of $\eta$ vanishes in $\ulim_p H_{\m_p}^d(\omega_{R_p}\otimes_{R_p} I_{D_p}^+(D_p+\Delta_p))$. 
Since $K_X+D+\Delta$ is $\Q$-Cartier, $r(K_X+D+\Delta)=\Div f$ for some integer $r \ge 1$ and some nonzero element $f \in R$. 
Fix a module-finite extension $S$ of $R$ contained in $R^+$ such that $S$ is normal and $f^{1/r} \in S$, and let $\pi:\Spec S \to \Spec R=X$ denote the morphism corresponding to the inclusion $R \hookrightarrow S$. 
We now consider a commutative diagram
{\small
\[
\hspace*{-0.65em}
\xymatrix@C-1.25pc{
H_\m^d(\omega_R) \ar[rrr] \ar[d]_{\cong}  & & & \ulim_p H_{\m_p}^d(\omega_{R_p}) \ar[dd]^{\cdot f^{1/r}}\\
H_\m^d(\omega_R \otimes_R R(D) \otimes_R R(-D)) \ar[d] & & &  \\
H_\m^d(\omega_R \otimes_R S(\pi^*(D+\Delta)) \otimes_S \B(I_D)) \ar[rr]^{\hspace*{5.5em} \cdot f^{1/r}}_{\hspace*{5.5em} \cong} \ar[d] & \quad & H^d_\m(\B(I_D)) \ar[r]^{\hspace*{-1em}\beta_D}  & \ulim_p H^d_{\m_p}(I_{D_p}^+) \\
H_{\m}^d(\omega_R\otimes_R \B(I_D,D+\Delta)) \ar[d]_{\cong} & & &  \\
H^d_{\m}(\ulim_p \omega_{R_p} \otimes_{R_p}  I_{D_p}^+(D_p+\Delta_p))  \ar[rrr] &  & &
\ulim_p H_{\m_p}^d(\omega_{R_p}\otimes_{R_p} I_{D_p}^+(D_p+\Delta_p)) \ar[uu]^{\cong}_{\cdot f^{1/r}},
}
\]
}
where $\beta_D$ is injective by Lemma \ref{lemma local cohomology injectivity} and the isomorphisms in the lower left and lower right are consequences of Lemma \ref{lemma tensor ultraproduct} and Lemma \ref{Lemma for BCM adjoint ideals}, respectively. 
By the commutativity of this diagram, the image of $\eta$ has to be zero in $H_\m^d(\omega_R \otimes_R S(\pi^*(D+\Delta)) \otimes_S \B(I_D))$. Thus, 
\begin{align*}
\eta &\in \ker(H_{\m}^d(\omega_{R})\to H_{\m}^d(\omega_{R}\otimes_{R} \B(I_D, D+\Delta)))\\
&=\ker(H_{\m}^d(\omega_{R}) \to H_{\m}^d(\omega_{R}) \otimes_R  \B(I_D, D+\Delta))\\
&=0^{\B_D(D+\Delta)}_{H_{\m}^d(\omega_{R})}. 
\end{align*}
\end{proof}

%%%%%%%%%%%%%%%%%%%%%%%%%%%%%%%%%%%%%%%%%%%%%%%%%%%%%%%%%%%%%%%%%%%%%%%%%%%%%%%%%%%%%%%%%%%%%%%%%%%%%%
\section{Pullback of divisors}
%%%%%%%%%%%%%%%%%%%%%%%%%%%%%%%%%%%%%%%%%%%%%%%%%%%%%%%%%%%%%%%%%%%%%%%%%%%%%%
In this section, we discuss how to pullback Weil divisors. 
Our main reference is \cite[Section 2]{dFH}. 
Although morphisms are assumed to be birational in \textit{loc.~cit.}, essentially the same arguments work in our setting. 

\begin{defn}[\textup{cf.~\cite[Section 2]{dFH}}]\label{pullback}
Let $R\hookrightarrow S$ be an injective homomorphism between Noetherian normal domains and $\phi:\Spec S \to \Spec R$ denote the corresponding morphism. 
\begin{enumerate}
\item 
Suppose that $D$ is a Weil divisor on $\Spec R$. 
The \textit{cycle-theoretic pullback} $\phi^{\natural}D$ of $D$ under $\phi$ is the Weil divisor 
\[
\phi^{\natural} D=\sum_{E}v_E(R(-D))E,
\]
where $E$ runs through all prime divisors on $\Spec S$ and $v_E$ is the discrete valuation associated to $E$. 
\item Suppose that $\Gamma$ is an $\R$-Weil divisor on $\Spec R$. 
The \textit{pullback} $\phi^*\Gamma$ of $\Gamma$ under $\phi$ is the $\R$-Weil divisor 
\[
\phi^*\Gamma=\sum_{E}\left(\inf_{m}\frac{v_E(R(\lceil -m\Gamma\rceil))}{m}\right)E,
\]
where $E$ runs through all prime divisors on $\Spec S$ and the infimum is taken over all integers $m \ge 1$.
If $\Gamma$ is $\Q$-Cartier, then this definition coincides with the classical definition of pullback. 
\end{enumerate}
\end{defn}

\begin{rem}\label{pullback rem}
\begin{enumerate}
\item 
$S(-\varphi^{\natural}D)=(R(-D)S)^{**}$, where $(-)^{**}$ denotes the reflexive hull as an $S$-module. 
\item 
If $D_1$ and $D_2$ are Weil divisors on $\Spec R$, then $\phi^{\natural}(D_1+D_2) \le \phi^{\natural}D_1+\phi^{\natural} D_2$ holds and the inequality is strict in general. 
\item 
If $D$ is a Weil divisor, one generally has the inequality $\varphi^*D \le \varphi^{\natural} D$. 
If $\varphi$ is flat, then $\varphi^{\natural}D=\varphi^*D$, which also coincides with the flat pullback of $D$ under $\varphi$. Similarly, if $\varphi$ is finite, then $\varphi^\natural D=\varphi^* D$, which coincides with the usual finite pullback of $D$ under $\varphi$.
\item 
Definition \ref{pullback} can be generalized to the case of dominant morphisms $\varphi:Y \to X$ between normal (not necessarily affine) varieties. 
If $\varphi$ is a  small birational morphism, then $\varphi^*D$ is nothing but the strict transform of $D$ on $Y$ (see \cite[Remark 2.12]{CEMS}). 
\end{enumerate}
\end{rem}

From now on, we work with the following setting. 
\begin{setting}\label{Setting cycle-theoretic pullback}
Let $k$ be an algebraically closed field. 
Suppose that $R \hookrightarrow S$ is an injective $k$-algebra homomorphism between normal domains essentially of finite type over $k$ and $\phi:\Spec S \to \Spec R$ is the corresponding morphism.  
Let $\Lambda$ (resp. $M$) be the set of module-finite extensions $R_{\lambda}$ (resp.~$S_{\mu}$) of  $R$ (resp.~$S$), contained in $R^+$ (resp.~$S^+$), such that each $R_{\lambda}$ (resp.~$S_{\mu}$) is a normal domain. 
When $R_{\lambda}$ (resp.~$S_{\mu}$) belongs to $\Lambda$ (resp.~$M$), 
we write the morphism corresponding to the inclusion $R \hookrightarrow R_\lambda$ (resp.~$S \hookrightarrow S_{\mu}$) by $\pi_\lambda:\Spec R_{\lambda}\to \Spec R$ (resp.~$\rho_\mu:\Spec S_{\mu}\to \Spec S$). 
\end{setting}

\begin{prop}\label{cycle-theoretic pullback prime case}
With notation as in Setting \ref{Setting cycle-theoretic pullback}, 
take $R_{\lambda} \in \Lambda$ and $S_{\mu} \in M$ such that $R_\lambda$ is contained in $S_\mu$ and let $\phi_{\lambda \mu}:\Spec S_{\mu} \to \Spec R_{\lambda}$ denote the corresponding morphism. 
For a Weil divisor $D$ on $\Spec R$, note that $\pi_{\lambda}^* D$ is a Weil divisor on $\Spec R_{\lambda}$ since $\pi_{\lambda}$ is a finite morphism. 
Then one has an inequality 
\[
\rho_{\mu}^*(\varphi^{\natural} D) \ge \varphi_{\lambda \mu}^{\natural} (\pi_{\lambda}^* D)
\]
of Weil divisors on $\Spec S_{\mu}$. 
\end{prop}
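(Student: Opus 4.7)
The plan is to compare the two $\R$-Weil divisors coefficient-by-coefficient at each prime divisor $E$ of $\Spec S_\mu$, with the entire argument taking place inside $\Frac S_\mu$. The key unifying observation is that both $R_\lambda$ and $S$ sit inside the DVR $\mathcal{O}_E$ (via $R_\lambda, S \subseteq S_\mu \subseteq \mathcal{O}_E$), so for any $R$-fractional ideal $M \subseteq \Frac R$, expanding an element of $MT$ as $\sum r_i t_i$ with $r_i \in M$ and $t_i \in T$ and using $v_E(t_i) \ge 0$ gives $v_E(MT) = v_E(M)$ whether $T$ is taken to be $R_\lambda$, $S$, or $S_\mu$; since reflexive hulls are characterized valuation-theoretically, $v_E$ is also unchanged after taking $(-)^{**}$.

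For the left-hand side, Remark \ref{pullback rem}(1) applied to $\varphi$ gives $S(-\varphi^\natural D) = (R(-D)S)^{**}$, and the $m$-th reflexive power $S(-m\varphi^\natural D) = S(-\varphi^\natural D)^{[m]}$ satisfies $v_E(S(-m\varphi^\natural D)) = m \cdot v_E(R(-D))$ by the observation above. Dividing by $m$ and taking the infimum, the coefficient of $E$ in $\rho_\mu^* \varphi^\natural D$ is $v_E(R(-D))$. On the right, I would read $\varphi_{\lambda\mu}^\natural \pi_\lambda^* D$ through its integer approximations $\tfrac{1}{m}\varphi_{\lambda\mu}^\natural \pi_\lambda^\natural(mD)$: Remark \ref{pullback rem}(1) applied to $\pi_\lambda$ gives $R_\lambda(-\pi_\lambda^\natural(mD)) = (R(-mD)R_\lambda)^{**}$, whose $v_E$-value is $v_E(R(-mD))$ by the same observation. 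Hence the coefficient of $E$ in $\varphi_{\lambda\mu}^\natural \pi_\lambda^* D$ is $\inf_{m \ge 1} v_E(R(-mD))/m$.

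The desired inequality at $E$ therefore reduces to $v_E(R(-D)) \ge \inf_{m\ge 1} v_E(R(-mD))/m$, which follows from the elementary containment of rank-one $R$-submodules $R(-D)^m \subseteq R(-mD)$ inside $\Frac R$ (valid since $R(-mD)$ is the reflexive $m$-th power of $R(-D)$): this yields $v_E(R(-mD)) \le v_E(R(-D)^m) = m \cdot v_E(R(-D))$ for every $m$. The main subtlety, rather than any real obstacle, is the interpretation of $\varphi_{\lambda\mu}^\natural \pi_\lambda^* D$ itself, since $\pi_\lambda^* D$ is \emph{a priori} only an $\R$-Weil divisor while Definition \ref{pullback}(1) defines cycle-theoretic pullback only for integer divisors; passing through the rescaled integer divisors $\pi_\lambda^\natural(mD)/m$ as above renders the rest of the argument essentially formal.
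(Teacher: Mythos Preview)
Your computation of the left-hand side is correct, and the overall strategy of comparing valuations at each prime divisor $E$ of $\Spec S_\mu$ matches the paper's approach. The gap lies in your treatment of the right-hand side.

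Your ``key unifying observation'' asserts that $v_E$ is unchanged after taking reflexive hulls over any of $R_\lambda$, $S$, or $S_\mu$. This holds over $S$ because $\rho_\mu$ is \emph{finite}: the center of $v_E$ on $\Spec S$ is the image $F = \rho_\mu(E)$, which is automatically a height-one prime, and reflexive hull over $S$ does not change localizations at height-one primes. But the morphism $\varphi_{\lambda\mu}\colon \Spec S_\mu \to \Spec R_\lambda$ is not finite in general, so the center of $v_E$ on $\Spec R_\lambda$ may well have height $\ge 2$. In that case $(R(-mD)R_\lambda)^{**}$ can be strictly larger than $R(-mD)R_\lambda$ at that center, and $v_E\bigl((R(-mD)R_\lambda)^{**}\bigr)$ can be strictly smaller than $v_E(R(-mD))$. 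Your claimed equality fails here, and with it your identification of the right-hand coefficient as $\inf_m v_E(R(-mD))/m$.

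The detour through ``integer approximations'' is also unnecessary and misleading. Since $\pi_\lambda$ is a finite morphism between normal schemes, $\pi_\lambda^* D$ is already an integral Weil divisor (indeed equal to $\pi_\lambda^\natural D$), so $\varphi_{\lambda\mu}^\natural \pi_\lambda^* D$ is well-defined directly; its coefficient at $E$ is simply $v_E\bigl((R(-D)R_\lambda)^{**}\bigr)$. Moreover, even granting your scheme, $\inf_m \tfrac{1}{m}\varphi_{\lambda\mu}^\natural(m G)$ computes $\varphi_{\lambda\mu}^* G$ rather than $\varphi_{\lambda\mu}^\natural G$, so the infimum bounds the wrong quantity. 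The fix is to replace your claimed equality by the trivial inequality $v_E\bigl((R(-D)R_\lambda)^{**}\bigr) \le v_E(R(-D))$, which follows from the containment $R(-D) \subseteq (R(-D)R_\lambda)^{**}$; this immediately yields the desired comparison and is exactly the paper's one-line argument.
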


\begin{proof}
Let $F_{\mu}$ be a prime divisor on $\Spec S_{\mu}$ and $F=\rho_{\mu}(F_{\mu})$ denote the image of $F_{\mu}$ under $\rho_{\mu}$. Then 
\begin{align*}
\ord_{F_{\mu}}(\varphi_{\lambda \mu}^{\natural} (\pi_{\lambda}^* D))=v_{F_{\mu}}((I_D R_{\lambda})^{**}) &\le v_{F_{\mu}}(I_D)\\
&=v_{F_{\mu}}(I_F)v_F(I_D)\\
&=\ord_{F_{\mu}}(F) \ord_F(\varphi^{\natural} D)\\
&=\ord_{F_{\mu}}(\rho_{\mu}^*(\varphi^{\natural} D)),
\end{align*}
where $(I_D R_{\lambda})^{**}$ is the reflexive hull of $I_D R_{\lambda}$ as an $R_{\lambda}$-module. 
\end{proof}

\begin{rem}
Cycle-theoretic pullback does not commute with finite pullback, that is, the inequality in Proposition \ref{cycle-theoretic pullback prime case} is strict in general. 
For example, let $S=\C[x, y, z]$ be the 3-dimensional polynomial ring over $\C$ and $R=\C[xy^2, xyz, xz^2]$ be a subring of $S$. 
Consider the module-finite extension $R_{\lambda}=\C[\sqrt{x}y, \sqrt{x}z]$ of $R$ and the module-finite extension $S_{\mu}=\C[\sqrt{x}, y, z]$ of $S$. 
\[
\xymatrix{
\Spec  \C[\sqrt{x}, y, z] \ar[r]^{\varphi_{\lambda \mu} \quad} \ar[d]_{\rho_{\mu}}& \Spec \C[\sqrt{x}y, \sqrt{x}z] \ar[d]^{\pi_{\lambda}} \\
\Spec  \C[x, y, z] \ar[r]^{\varphi \quad} & \Spec \C[xy^2, xyz, xz^2] \\
}
\]
Let $D$ be a prime divisor on $\Spec \C[xy^2, xyz, xz^2]$ defined by the prime ideal $(xy^2, xyz)$ of height one. 
Then $\pi_{\lambda}^*D=\Div \sqrt{x}y$ and $\varphi_{\lambda \mu}^{\natural} \pi_{\lambda}^* D=\Div \sqrt{x}+\Div y$. 
On the other hand, $\varphi^{\natural}D=\Div x+\Div y$ and $\rho_{\mu}^*\varphi^{\natural}D=2 \Div \sqrt{x}+\Div y$. 
\end{rem}

\begin{prop}\label{cycltheoretic pullback under pure ring extension}
With notation as in Setting \ref{Setting cycle-theoretic pullback}, 
suppose in addition that $R\hookrightarrow S$ is a pure local homomorphism. 
For a prime divisor $D$ on $\Spec R$, one has  
\[
S(-\phi^{\natural} D)\cap R=R(-D).
\]
\end{prop}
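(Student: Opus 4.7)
The plan is to reduce the question to a principal-ideal computation at the height-one prime $\mathfrak{p} := R(-D) \subset R$. Since $R$ is normal and $D$ is a prime divisor, $R_{\mathfrak{p}}$ is a DVR; I would fix some $\pi \in \mathfrak{p}$ with $v_{\mathfrak{p}}(\pi)=1$, so $\pi$ generates the maximal ideal of $R_{\mathfrak{p}}$. The inclusion $R(-D) \subseteq S(-\phi^{\natural}D) \cap R$ is immediate from $S(-\phi^{\natural}D) = (R(-D)S)^{**}$ (Remark \ref{pullback rem} (1)), so the content is the reverse containment.

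For this, I would base change the identity $S(-\phi^{\natural}D) = (R(-D)S)^{**}$ along the flat map $R \to R_{\mathfrak{p}}$. Because $R(-D)S$ is finitely generated over the Noetherian ring $S$, reflexive hull commutes with flat localization, so with $S_{\mathfrak{p}} := S \otimes_R R_{\mathfrak{p}}$ one computes
\[
S(-\phi^{\natural}D)\otimes_R R_{\mathfrak{p}} \;=\; (R(-D)\cdot S_{\mathfrak{p}})^{**} \;=\; (\pi S_{\mathfrak{p}})^{**} \;=\; \pi S_{\mathfrak{p}},
\]
where the last equality uses that a nonzero principal ideal in a normal domain is reflexive. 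Hence any $x \in S(-\phi^{\natural}D) \cap R$, after clearing denominators, satisfies $tx = \pi s$ in $S$ for some $t \in R \setminus \mathfrak{p}$ and $s \in S$.

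At this point purity enters through the basic consequence that $IS \cap R = I$ for every ideal $I \subseteq R$, applied to $I = \pi R$. From $\pi s = tx \in R$ I would conclude $\pi s \in \pi S \cap R = \pi R$, whence (since $S$ is a domain) $s \in R$. Applying the discrete valuation $v_{\mathfrak{p}}$ to $tx = \pi s$, together with $v_{\mathfrak{p}}(t)=0$ and $v_{\mathfrak{p}}(\pi)=1$, gives $v_{\mathfrak{p}}(x) \ge 1$, so $x \in \mathfrak{p}R_{\mathfrak{p}} \cap R = \mathfrak{p} = R(-D)$.

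The only technically delicate point is the first step: verifying the compatibility $(R(-D)S)^{**} \otimes_R R_{\mathfrak{p}} = (R(-D)\cdot S_{\mathfrak{p}})^{**}$. This is routine since $\Hom_S(-,S)$ commutes with flat base change on finitely presented modules over the Noetherian ring $S$. Beyond this, the argument is essentially a single application of purity after the problem has been collapsed to the principal ideal $\pi S_{\mathfrak{p}}$, and I do not anticipate any substantive obstacle.
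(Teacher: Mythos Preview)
Your argument is correct and takes a genuinely different route from the paper's. The paper uses purity only through surjectivity of $\phi$: it picks a prime $\mathfrak r\subset S$ lying over $I_D$, passes to a finite-type model $A\to B$, and invokes a height argument (\cite[Theorem~15.1]{Mat}) to find a height-one prime $\mathfrak sS$ of $S$ with $I_DS\subseteq\mathfrak sS\subseteq\mathfrak r$; since $\mathfrak sS$ then lies over $I_D$ and occurs in $\phi^{\natural}D$, one gets $S(-\phi^{\natural}D)\cap R\subseteq\mathfrak sS\cap R=I_D$. You instead localize at $\mathfrak p=I_D$ to make the ideal principal, use that the reflexive hull of a principal ideal in a normal domain is itself, and then apply the ideal-contraction property $\pi S\cap R=\pi R$ of pure extensions. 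Your approach is more self-contained---it avoids the finite-type model and dimension theory and works verbatim for any pure extension of Noetherian normal domains---while the paper's argument has the side benefit of exhibiting a component of $\phi^{\natural}D$ that actually dominates $D$, a fact used elsewhere (e.g.\ in Proposition~\ref{pure ring extension reduction modulo p}). One cosmetic simplification: once you know $tx\in\pi S\cap R=\pi R\subseteq\mathfrak p$ with $t\notin\mathfrak p$, primality of $\mathfrak p$ gives $x\in\mathfrak p$ directly, so the detour through $s\in R$ and the valuation computation can be dropped.
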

\begin{proof}
First, note that $\varphi$ is surjective since $R \hookrightarrow S$ is pure. 
Pick a prime ideal $\p$ of $S$ lying over $I_D=R(-D)$, and  let $\q$ be a minimal prime ideal of $I_D S$ contained in $\p$. 
Then $\q$ also lies over $I_D$.
By \cite[Theorem 15.1]{Mat}, we have 
\[
    \Ht \q\le \Ht(I_D)+\dim(S_\q/I_DS_\q)=1.
\]
Since $\q$ is a nonzero ideal, it must be a height one prime of $S$ and  $I_{\phi^\natural D} \subseteq \q$.
Therefore,
\[
    I_D \subseteq I_{\phi^\natural D}\cap R \subseteq \q\cap R= I_D,
\]
which completes the proof.
\end{proof}

The following proposition is one of the key ingredients in the study of the behavior of adjoint ideals under pure morphisms. 
\begin{prop}\label{pure ring extension reduction modulo p}
With notation as in Setting \ref{Setting cycle-theoretic pullback}, 
let $D$ be a prime divisor on $\Spec R$, and suppose that the cycle-theoretic pullback $E:=\varphi^{\natural} D$ of $D$ under $\phi$ is a prime divisor and dominates $D$. 
Let $\Gamma$ (resp.~$\Delta$) be an effective $\Q$-Weil divisor on $\Spec R$ (resp.~$\Spec S$)  that has no component equal to $D$ (resp.~$E$), and suppose that $\Delta \ge \phi^*\Gamma$. 
Fix choices of $I_D^+$ and $I_E^+$ such that the following diagram commutes: 
	\[
		\xymatrix{
		0  \ar[r] & I_D^+ \ar[r] \ar[d] & R^+ \ar[r] \ar[d] & (R/I_D)^+ \ar[r] \ar[d] & 0\\
		0 \ar[r] & I_E^+ \ar[r] &  S^+ \ar[r] & (S/I_E)^+ \ar[r] & 0
		}
	\]
Then there exists a natural inclusion
	\[
		I_D^+(D+\Gamma) \hookrightarrow I_E^+(E+\Delta).
	\]
\end{prop}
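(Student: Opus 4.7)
The plan is to define the inclusion level by level on the direct limits and then take the colimit. For each $R_\lambda \in \Lambda$, choose $S_\mu \in M$ containing the image of $R_\lambda$ under the embedding $R^+ \hookrightarrow S^+$ coming from the commutative diagram; such $S_\mu$ exists because $R_\lambda \cdot S \subseteq S^+$ is module-finite over $S$, and we may pass to its normalization in its fraction field. I will set the level map to be the restriction of the field inclusion $\Frac R_\lambda \hookrightarrow \Frac S_\mu$. Compatibility with the direct-system transitions $R_\lambda \hookrightarrow R_{\lambda'}$ and independence from the choice of $S_\mu$ in the colimit are routine once the inclusion is shown to hold at each finite level.

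The substantive step is to verify that, for each $f \in R_\lambda(\lfloor \pi_\lambda^*(D+\Gamma)-D_\lambda \rfloor)$ and each prime divisor $F_\mu$ on $\Spec S_\mu$, one has
\[
 v_{F_\mu}(f) \;\ge\; -\bigl\lfloor (\rho_\mu^*(E+\Delta) - E_\mu)_{F_\mu} \bigr\rfloor.
\]
Let $F_\lambda$ denote the center of $v_{F_\mu}$ on $\Spec R_\lambda$. If $F_\lambda$ is the generic point, then $v_{F_\mu}|_{\Frac R_\lambda}$ is trivial and $v_{F_\mu}(f)=0$, so the inequality reduces to $(\rho_\mu^*(E+\Delta) - E_\mu)_{F_\mu}\ge 0$: this is immediate from effectiveness for $F_\mu \ne E_\mu$, and for $F_\mu = E_\mu$ it follows from $(\rho_\mu^*E)_{E_\mu} = e_{E_\mu/E}\ge 1$. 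If $F_\lambda$ has codimension one, then ramification yields $v_{F_\mu}(f) = e_{F_\mu/F_\lambda}\, v_{F_\lambda}(f)$; using the defining inequality on $f$ together with the observation that $F_\mu = E_\mu$ forces $F_\lambda = D_\lambda$ (the restriction of $v_{E_\mu}$ to $\Frac R_\lambda$ is a DVR on the finite extension $\Frac R_\lambda\subseteq \Frac S_\mu$, so its center is a height-one prime, which must equal $R_\lambda(-D_\lambda)$ since the latter sits inside $I_E^+\cap R_\lambda$ by the commutative diagram), the problem reduces to the coefficient inequality
\[
 (\rho_\mu^*(E+\Delta))_{F_\mu} \;\ge\; e_{F_\mu/F_\lambda}\,(\pi_\lambda^*(D+\Gamma))_{F_\lambda}.
\]

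To establish this coefficient inequality, I would first use that $D,\Gamma$ (resp.~$E,\Delta$) have disjoint supports to split both pullbacks additively: a direct calculation from the defining infimum gives $(\pi_\lambda^*\Gamma)_{F_\lambda} = e_{F_\lambda/P}\,\Gamma_P$ whenever $F_\lambda$ lies over a component $P$ of $\Gamma$ (via existence of elements of $R$ with prescribed valuations at the finitely many components involved), and analogously for the three other pullbacks. For the $D$/$E$ part, the hypothesis that $E = \varphi^{\natural}D$ is a prime divisor is equivalent to $v_E(I_D) = e_{E/D} = 1$; coupled with multiplicativity $e_{F_\mu/F_\lambda}\, e_{F_\lambda/D} = e_{F_\mu/D} = e_{F_\mu/E}\,e_{E/D}$, this gives the exact equality $(\rho_\mu^*E)_{F_\mu} = e_{F_\mu/F_\lambda}(\pi_\lambda^*D)_{F_\lambda}$ whenever $F_\lambda$ is over $D$. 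For the $\Gamma$/$\Delta$ contribution, applying the same multiplicativity to the tower $P\to P_S\to F_\mu$, where $P_S$ is the center of $v_{F_\mu}$ on $\Spec S$, and invoking $\Delta \ge \varphi^*\Gamma$ (in particular $\Delta_{P_S} \ge e_{P_S/P}\Gamma_P$), yields $(\rho_\mu^*\Delta)_{F_\mu} \ge e_{F_\mu/F_\lambda}(\pi_\lambda^*\Gamma)_{F_\lambda}$.

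The main obstacle is the simultaneous bookkeeping of the two ramification towers $R\hookrightarrow R_\lambda$ and $R\hookrightarrow S\hookrightarrow S_\mu$, which meet inside $\Spec S_\mu$ at the prime $F_\mu$ through the compatible embedding $R^+\hookrightarrow S^+$. The primality of $E$ is the decisive hypothesis here: it is precisely what makes the $-E_\mu$ correction on the target align cleanly with the $-e_{F_\mu/F_\lambda} D_\lambda$ correction on the source. Without it, the extra ramification of $E$ over $D$ would consume the pole budget allotted at $E_\mu$, and the inclusion would fail even in the trivial case $\Gamma = \Delta = 0$.
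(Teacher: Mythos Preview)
Your level-by-level strategy is natural and matches the paper's overall approach, and your ramification-tower bookkeeping in the codimension-one case (including the use of $e_{E/D}=1$ coming from the primality of $E=\varphi^\natural D$) is sound.

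The gap is that your case analysis on the center $F_\lambda$ of $v_{F_\mu}$ on $\Spec R_\lambda$ only treats the generic point and codimension-one cases. Since $R\hookrightarrow S$ is merely an injective $k$-algebra homomorphism with no flatness or finiteness assumed, the induced morphism $\varphi_{\lambda\mu}\colon\Spec S_\mu\to\Spec R_\lambda$ need not satisfy going-down, and a prime divisor $F_\mu$ can contract to a point of codimension $\ge 2$ on $\Spec R_\lambda$. In that situation there is no discrete valuation $v_{F_\lambda}$, your identity $v_{F_\mu}(f)=e_{F_\mu/F_\lambda}\,v_{F_\lambda}(f)$ is undefined, and the target inequality $(\rho_\mu^*(E+\Delta))_{F_\mu}\ge e_{F_\mu/F_\lambda}(\pi_\lambda^*(D+\Gamma))_{F_\lambda}$ has no meaning. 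Since $f$ may have poles along the support of $\pi_\lambda^*(D+\Gamma)-D_\lambda$ and $F_\lambda$ can lie on that support, there is no obvious lower bound on $v_{F_\mu}(f)$ available from your setup, so this is not a cosmetic omission.

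The paper avoids this obstacle by never inspecting the center on $\Spec R_\lambda$. From the membership condition on $f$ it extracts the ideal containment
\[
f^m\,R_\lambda(-\pi_\lambda^*D)^m\,R(-m\Gamma)\subseteq R_\lambda(-mD_\lambda)\subseteq S_\mu,
\]
which yields a valuation inequality directly at $F_\mu$, and then invokes Proposition~\ref{cycle-theoretic pullback prime case} (the comparison $\rho_\mu^*\varphi^\natural D\ge \varphi_{\lambda\mu}^\natural\pi_\lambda^*D$) to bound $v_{F_\mu}(R_\lambda(-\pi_\lambda^*D))=\ord_{F_\mu}(\varphi_{\lambda\mu}^\natural\pi_\lambda^*D)$ by $\ord_{F_\mu}(\rho_\mu^*E)$. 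This argument is uniform in $F_\mu$ and does not depend on the codimension of its center on $\Spec R_\lambda$; that proposition is precisely the missing ingredient in your approach.
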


\begin{proof}
Take $\lambda\in\Lambda$ and $\mu \in M$ such that $R_\lambda$ is contained in $S_\mu$, and let $\varphi_{\lambda \mu}:\Spec S_\mu \to \Spec R_\lambda$ denote the corresponding morphism and  $E_{\mu}$ (resp.~$D_{\lambda}$) denote the prime divisor on $\Spec S_{\mu}$ (resp.~$\Spec R_{\lambda}$) such that $S_{\mu}(-E_{\mu})=I_E^+ \cap S_{\mu}$ (resp.~$R_{\lambda}(-D_{\lambda})=I_D^+ \cap R_{\lambda}$).
It suffices to show the inclusion 
\[
R_{\lambda}(\lfloor \pi_{\lambda}^*(D+\Gamma)-D_\lambda \rfloor)\hookrightarrow S_{\mu}(\lfloor \rho_\mu^*(E+\Delta)-E_\mu \rfloor).
		\]
For any nonzero element $f\in R_\lambda(\lfloor \pi_\lambda^*(D+\Gamma)-D_\lambda \rfloor)$ and any prime divisor $F_{\mu}$ on $\Spec S_\mu$,  
we will show that $\ord_{F_{\mu}}(\Div_{S_\mu} f+\rho_\mu^*(E+\Delta)-E_\mu)\ge 0$. 
First consider the case where $F_{\mu} \neq E_\mu$. 
By assumption, 
\[
\inf_{m} \frac{v_{F_{\mu}}(R(-m\Gamma))}{m}=\ord_{F_{\mu}} (\rho_\mu^*(\phi^*\Gamma)) \le \ord_{F_{\mu}} (\rho_\mu^*\Delta),
\]
where the infimum is taken over all integers $m \ge 1$ such that $m\Gamma$ is an integral Weil divisor. 
Since 
\[
f^m R_{\lambda}(-\pi_{\lambda}^* D)^m R(-m\Gamma)   \subseteq R_\lambda(-mD_\lambda)\subseteq S_\mu
\]
for such $m$, one has 
\begin{align*}
\ord_{F_{\mu}}(\Div_{S_\mu} f+\rho_\mu^*(E+\Delta)-E_\mu)&=\ord_{F_{\mu}}(\Div_{S_\mu} f+\rho_\mu^*E+\rho_\mu^*\Delta)\\
& \ge \ord_{F_{\mu}}(\Div_{S_{\mu}} f+\varphi_{\lambda \mu}^{\natural}\pi^*_{\lambda}D+\rho_{\mu}^*\Delta)\\
& \ge 0,
\end{align*}
where the middle inequality follows from  Proposition \ref{cycle-theoretic pullback prime case}. 

Next we treat the case where $F_{\mu}=E_{\mu}$. 
Since $E$ dominates $D$, the prime divisor $E_{\mu}$ dominates $D_{\lambda}$. 
Also, $\ord_{D_{\lambda}} \pi_{\lambda}^*\Gamma= \ord_{E_{\mu}} \rho_{\mu}^*\Delta=0$ by assumption. 
Therefore, by Proposition \ref{cycle-theoretic pullback prime case}, 
\begin{align*}
\ord_{E_{\mu}}(\Div_{S_\mu}f+\rho_\mu^*(E+\Delta)-E_\mu) &
\ge \ord_{E_{\mu}}(\Div_{S_\mu}f+\varphi_{\lambda \mu}^{\natural}\pi^*_{\lambda}D)-1 \\
& \ge \ord_{D_{\lambda}}(\Div_{R_{\lambda}}f+\pi_{\lambda}^*D)-1 \\
&=\ord_{D_{\lambda}}(\Div_{R_{\lambda}}f+\pi_{\lambda}^*(D+\Gamma)-D_{\lambda})\\
&\ge 0.
\end{align*}
\end{proof}

Cycle-theoretic pullback commutes with taking approximations. 
\begin{prop}\label{reduction modulo p cycle-theoretic pullback}
Suppose that $R\hookrightarrow S$ is an injective local $\C$-algebra homomorphism between normal local rings essentially of finite type over $\C$ and $\varphi:\Spec S \to \Spec R$ is the corresponding morphism. 
Let $D$ be a Weil divisor on $\Spec R$ and $E:=\varphi^{\natural} D$ be the cycle-theoretic pullback of $D$ under $\varphi$. Fix a non-principal ultrafilter $\mathcal{F}$ on $\mathcal{P}$ and a field isomorphism $\ulim_p \overline{\F_p}\cong \C$. If $(\varphi_p:\Spec S_p \to \Spec R_p)_{p \in \mathcal{P}}, (D_p)_{p \in \mathcal{P}}, (E_p)_{p \in \mathcal{P}}$ are approximations of $\varphi, D, E$, respectively, then $E_p$ is the cycle-theoretic pullback of $D_p$ under $\varphi_p$ for almost all $p$.
\end{prop}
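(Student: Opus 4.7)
The plan is to leverage the identity $S(-E)=(I_D S)^{**}$ from Remark \ref{pullback rem}(1) (where $I_D:=R(-D)$), together with its characteristic $p$ analogue $S_p(-\varphi_p^{\natural}D_p)=(I_{D_p}S_p)^{**}$, and to reduce the proposition to showing that double $S$-dualization commutes with approximation for finitely presented $S$-modules. Combined with uniqueness of the approximation of a rank-one divisorial fractional $S$-module, this will force $S_p(-E_p)=S_p(-\varphi_p^{\natural}D_p)$, and hence $E_p=\varphi_p^{\natural}D_p$, for almost all $p$.

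First I would verify that $(I_{D_p}S_p)_{p\in\mathcal{P}}$ is an $S$-approximation of $I_DS$. Choosing generators $f_1,\dots,f_n$ of $I_D$ together with approximations $(f_{i,p})_p$, the construction of the approximation $(D_p)_p$ of $D$ gives $I_{D_p}=(f_{1,p},\dots,f_{n,p})$ for almost all $p$, and hence $I_{D_p}S_p=(f_{1,p},\dots,f_{n,p})S_p$ is an $S$-approximation of $(f_1,\dots,f_n)S=I_DS$.

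Second, I would show that for any finitely presented $S$-module $M$ with an approximation $(M_p)_p$, the family $((M_p)^{**})_p$ is an approximation of $M^{**}$. Given a finite presentation $S^{a}\xrightarrow{A}S^{b}\to M\to 0$, its first dual $\Hom_S(M,S)$ is the kernel of the transposed map $S^{b}\xrightarrow{A^{t}}S^{a}$. Since ultraproducts commute with kernels of matrices whose entries are specified elements (by \L os's theorem), the functor $\Hom_S(-,S)$ commutes with taking approximations on finitely presented $S$-modules, and iterating yields the same for $(-)^{**}$. Applied to $M=I_DS$ together with the previous step, this gives $((I_{D_p}S_p)^{**})_p$ as an approximation of $(I_DS)^{**}=S(-E)$.

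Finally, since $R_p\hookrightarrow S_p$ is an injective homomorphism between normal domains for almost all $p$ (by Proposition \ref{reduction modulo p injectivity} and preservation of normality under approximation), Remark \ref{pullback rem}(1) yields $(I_{D_p}S_p)^{**}=S_p(-\varphi_p^{\natural}D_p)$ for almost all $p$. But $(S_p(-E_p))_p$ is also an approximation of $S(-E)$ by definition of $E_p$; so both families represent the same rank-one divisorial fractional $S$-ideal in $\Frac(S)$, which forces $S_p(-E_p)=S_p(-\varphi_p^{\natural}D_p)$, and therefore $E_p=\varphi_p^{\natural}D_p$, for almost all $p$. The main obstacle is the second step: showing cleanly that $(-)^{**}$ commutes with approximation. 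The single-dual case is a direct consequence of \L os's theorem applied to kernels of finite matrices, but the iteration to the double dual requires keeping track of a presentation of $\Hom_S(M,S)$ compatibly with the chosen approximations, which is routine bookkeeping but must be spelled out carefully to avoid circularity.
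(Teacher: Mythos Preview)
Your approach is valid in principle but takes a genuinely different route from the paper. The paper argues directly with the primary decomposition: it lists the minimal primes $\p_1,\dots,\p_n$ of $I_D S$, invokes \cite[Theorem~4.4]{Sch03} to see that $\p_{1p},\dots,\p_{np}$ are exactly the minimal primes of $I_{D_p}S_p$, uses \cite[Theorem~4.5]{Sch03} to preserve heights, and checks that the multiplicities $l_i$ (read off in the localizations $S_{\p_i}$) are preserved because localization commutes with approximation. This yields $\varphi_p^{\natural}D_p=\sum l_i E_{ip}=E_p$ directly.

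Your reflexive-hull approach packages the same information through $S(-E)=(I_DS)^{**}$, which is elegant, but your justification of step~2 has a genuine gap. \L o\'s's theorem tells you that ultraproducts commute with kernels of fixed matrices, i.e.\ $\ulim_p\ker(A_p^t)=\ker(A_\infty^t)$; it does \emph{not} by itself give the ``for almost all $p$'' statement $(M^*)_p=\ker(A_p^t)$ that you need. To bridge the gap you must also use flatness of $S\hookrightarrow S_\infty$ (so that $\ker(A^t)\otimes_S S_\infty=\ker(A_\infty^t)$), compare this with the ultraproduct of your chosen approximation of $M^*$, and then argue that a family of quotient modules with vanishing ultraproduct vanishes for almost all $p$. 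Iterating to the double dual requires producing a presentation of $M^*$ whose approximation is again compatible; this is more than ``routine bookkeeping'' and in effect reproves, in disguised form, the preservation of minimal primes and their lengths that the paper cites from \cite{Sch03}. Your final uniqueness step is fine once steps~1 and~2 are in place. In short: the strategy works, but the paper's direct primary-decomposition argument is shorter and rests on results already available in the literature, whereas your functorial route buys conceptual clarity at the cost of having to establish that $(-)^{**}$ commutes with approximation from scratch.
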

\begin{proof}
Suppose that $\p_1,\dots, \p_n$ are all the minimal prime ideals of $I_DS$. 
Let $(\p_{ip})_{p \in \mathcal{P}}$ be an approximation of $\p_i$ for each $i=1, \dots, n$, and then by \cite[Theorem 4.4]{Sch03}, $\p_{1p}, \dots, \p_{np}$ are all the minimal prime ideals of $I_{D_p}S_p$ for almost all $p$. 
By reindexing, if necessary, we may assume that $\Ht \p_i=1$ for $i=1,\dots,m$ and $\Ht \p_i\ge 2$ for $i=m+1,\dots,n$. 
Let $E_i$ denote the prime divisor on $\Spec S$ defined by $\p_i$ and $l_i$ denote the positive integer such that $I_DS_{\p_i}=t_i^{l_i}S_{\p_i}$, where $t_i$ is a uniformizer of the DVR $S_{\p_i}$,  for $i=1,\dots,m$. 
It then follows from Remark \ref{pullback rem} (1) that $E=\sum_{1\le i\le m} l_i E_i$. 

On the other hand, let $E_{ip}$ be the prime divisor on $\Spec S_p$ defined by $\p_{ip}$ for $i=1, \dots, m$ and for almost all $p$. 
Since $I_{D_p}(S_p)_{\p_{ip}}=(I_D S_{\p_i})_p$ and $\Ht \p_i=\Ht \p_{ip}$ for almost all $p$ (see \cite[Theorem 4.5]{Sch03} for the second equality), the cycle-theoretic pullback of $D_p$ under $\varphi_p$ is $\sum_{1\le i \le m} l_i E_{ip}$ for almost all $p$, which completes the proof. 
\end{proof}

The pullback of an approximation of a Weil divisor can be estimated from above by using an approximation of the pullback of this divisor.
\begin{prop}\label{reduction modulo p pullback} 
Suppose that $R\hookrightarrow S$ is an injective local $\C$-algebra homomorphism between normal local domains essentially of finite type over $\C$ and $\phi:\Spec S \to \Spec R$ is the corresponding morphism. 
Let $\Gamma$ be an effective $\Q$-Weil divisor on $\Spec R$, and fix an integer $m \ge 1$ such that $m\Gamma$ is an integral Weil divisor. Fix a non-principal ultrafilter $\mathcal{F}$ on $\mathcal{P}$ and a field isomorphism $\ulim_p \overline{\F_p}\cong \C$. Considering approximations of $\Gamma, \phi^*\Gamma$ and $\phi^\natural m\Gamma$, for every real number $\varepsilon>0$, one has 
\[
(\phi^*\Gamma)_p+\epsilon (\phi^\natural m\Gamma)_p\ge \phi_p^*\Gamma_p
\]
for almost all $p$.
\end{prop}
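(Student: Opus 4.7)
The plan is to verify the inequality coefficient-wise at each prime divisor of $\Spec S_p$, leveraging compatibility of approximations with $\R$-Weil divisors. Since the three divisors $(\phi^*\Gamma)_p$, $(\phi^\natural m\Gamma)_p$ and $\phi_p^*\Gamma_p$ are all supported in $\phi_p^{-1}(\Supp\Gamma_p)$, which is the approximation of $\phi^{-1}(\Supp\Gamma)$, I only need to inspect prime divisors $E_p$ on $\Spec S_p$ arising as approximations of the finitely many prime divisors $E$ on $\Spec S$ lying in $\phi^{-1}(\Supp\Gamma)$; at any other prime both sides vanish trivially. At such $E_p$, the coefficients read off as
\[
((\phi^*\Gamma)_p)_{E_p}=(\phi^*\Gamma)_E,\qquad ((\phi^\natural m\Gamma)_p)_{E_p}=v_E(R(-m\Gamma)).
\]

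The key observation is that the function $m'\mapsto v_E(R(-m'\Gamma))$ is subadditive: the containment $R(-m'\Gamma)\cdot R(-m''\Gamma)\subseteq R(-(m'+m'')\Gamma)$ gives $v_E(R(-m'\Gamma))+v_E(R(-m''\Gamma))\ge v_E(R(-(m'+m'')\Gamma))$ after localization at the DVR $S_E$, so Fekete's lemma yields
\[
(\phi^*\Gamma)_E=\lim_{m'\to\infty}\frac{v_E(R(-m'\Gamma))}{m'}.
\]
In the main case $v_E(R(-m\Gamma))>0$, I can therefore choose $m'\ge 1$ with
\[
\frac{v_E(R(-m'\Gamma))}{m'}\le (\phi^*\Gamma)_E+\varepsilon\,v_E(R(-m\Gamma)).
\]
For this fixed $m'$, since $R_p(-m'\Gamma_p)$ is an approximation of the $R$-submodule $R(-m'\Gamma)$ and valuations at height-one primes are preserved under approximation, $v_{E_p}(R_p(-m'\Gamma_p))=v_E(R(-m'\Gamma))$ for almost all $p$. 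Combining with the definitional bound $(\phi_p^*\Gamma_p)_{E_p}\le v_{E_p}(R_p(-m'\Gamma_p))/m'$ produces the desired coefficient inequality at $E_p$. The degenerate case $v_E(R(-m\Gamma))=0$ is handled directly: pick $f\in R(-m\Gamma)$ with $v_E(f)=0$; its powers $f^k\in R(-km\Gamma)$ force $(\phi^*\Gamma)_E=0$, and the approximation $f_p\in R_p(-m\Gamma_p)$ with $v_{E_p}(f_p)=0$ similarly forces $(\phi_p^*\Gamma_p)_{E_p}=0$ for almost all $p$. Intersecting the resulting ``almost all $p$'' sets over the finitely many $E$'s keeps us in the ultrafilter.

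The main obstacle is purely technical rather than conceptual: one must verify that each piece of the approximation dictionary invoked above (for $R(-m'\Gamma)$ as an $R$-submodule, for valuations at height-one primes, for supports of $\R$-Weil divisors, and for reading off their coefficients) is in place within the framework used in this paper. Once these compatibilities are granted, the proof is largely formal, reducing to a single application of Fekete's lemma to extract an appropriate $m'$.
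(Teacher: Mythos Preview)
Your proposal is correct and follows essentially the same approach as the paper's proof. The paper is simply terser: it picks a single large multiple $mn$ that works uniformly for all prime divisors $E$ (the degenerate case $v_E(R(-m\Gamma))=0$ being automatic), whereas you pick a separate $m'$ for each $E$ and intersect over the finitely many relevant $E$'s---but the underlying idea (approximate the subadditive infimum by a fixed finite stage and transfer via the preservation of $v_E(R(-m'\Gamma))$ under approximation) is identical.
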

\begin{proof}
Fix a real number $\varepsilon>0$ and take a sufficiently large integer $n$ so that 
\[
\frac{v_E(R(-mn\Gamma))}{mn}\le \ord_E(\phi^*\Gamma+\epsilon \phi^\natural m\Gamma)
\]
for all prime divisors $E$ on $\Spec S$. Since $v_{E_p}(R_p(-mn \Gamma_p))=v_E(R(-mn\Gamma))$ for almost all $p$, where $(E_p)_{p \in \mathcal{P}}$ is an approximation of $E$, this inequality implies that 
\[
\ord_{E_p}\phi_p^*\Gamma_p \le \frac{v_{E_p}(R_p(-mn \Gamma_p))}{mn}\le \ord_{E_p}((\phi^*\Gamma)_p+\epsilon (\phi^\natural m\Gamma)_p)
\]
for almost all $p$. 
\end{proof}

%%%%%%%%%%%%%%%%%%%%%%%%%%%%%%%%%%%%%%%%%%%%%%%%%%%%%%%%%%%%%%%%%%%%%%%%%%%%%%
\section{Faithfully flat descents of adjoint ideals}\label{faithfully flat section}
%%%%%%%%%%%%%%%%%%%%%%%%%%%%%%%%%%%%%%%%%%%%%%%%%%%%%%%%%%%%%%%%%%%%%%%%%%%%%%
In this section, we prove the faithfully flat descent property of adjoint ideals. 
First, following an idea from \cite{CEMS}, we extend the correspondence between adjoint ideals and test ideals along divisors to rings with finitely generated anti-canonical algebras. 

\begin{lem}\label{lemma divisorial test ideal = divisorial test submodule}
Suppose that $X$ is an $F$-finite normal integral scheme, $D$ is a reduced divisor and $\Delta$ is an effective $\Q$-Weil divisor on $X$ such that $D$ and $\Delta$ have no common components and the $\sO_X$-algebra $\bigoplus_{i\ge 0}\sO_X(\lfloor -i(K_X+D+\Delta)\rfloor)$ is finitely generated. 
Let $\ba \subseteq \sO_X$ be a coherent ideal sheaf whose zero locus contains no components of $D$ and $t>0$ be a rational number. 
Choose an integer $m \ge 1$ such that $mt$ is an integer, $\m \Delta$ is an integral Weil divisor, and the $m$-th Veronese subring of $\bigoplus_{i\ge 0}\sO_X(\lfloor -i(K_X+D+\Delta)\rfloor)$ is generated in degree one. Then 
	\[
	\tau_D(X,D+\Delta,\ba^t)=\tau_D(\omega_X, (\sO(-m(K_X+D+\Delta))\ba^{tm})^{\frac{1}{m}}). 
	\]
For the definition of the right hand side, see Remark \ref{parameter submodule with ideal}. 
\end{lem}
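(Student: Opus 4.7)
\emph{Plan.} The strategy is to unfold both sides of the claimed equality as the smallest $R$-fractional ideal of a fixed ambient module satisfying a family of Frobenius-stability conditions indexed by $e \ge 0$, and then to identify the two families of conditions via Grothendieck duality for the iterated Frobenius. Since both sides commute with localization (cf.~\cite[Corollary 3.6]{Tak08} and Remark~\ref{basic properties of parameter test}(1)), I would first reduce to the case where $X = \Spec R$ with $R$ local, fix a canonical divisor $K_X$, and view $\omega_R = R(K_X)$ as a fractional ideal of $R$; both sides then sit inside $K(X)$ as fractional ideals of $R$.

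\emph{The key identifications.} Set $\mathcal{J} := \sO_X(-m(K_X+D+\Delta))$, a rank-one reflexive fractional ideal. The hypothesis that the $m$-th Veronese of the anti-canonical algebra is generated in degree one forces the equality $\sO_X(-\ell m(K_X+D+\Delta)) = \mathcal{J}^{[\ell]}$ for every $\ell \ge 0$. Grothendieck duality for the iterated Frobenius yields
\begin{align*}
\Hom_R(F^e_* R(\lceil (p^e-1)(D+\Delta)\rceil), R) &\cong F^e_* \sO_X\bigl(\lfloor -(p^e-1)(K_X+D+\Delta)\rfloor\bigr),\\
\Hom_R(F^e_* \omega_R(p^e D),\, \omega_R(D)) &\cong F^e_* R.
\end{align*}
Writing $p^e - 1 = q_e m - s_e$ with $q_e = \lceil (p^e-1)/m\rceil$ and $0 \le s_e < m$, the Veronese hypothesis lets me express every section of the first module as a finite sum of products $g \cdot h'$, where $g$ is a section of $\mathcal{J}^{[q_e]}$ and $h'$ ranges over a controlled set of sections depending only on the bounded residue $s_e$.

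\emph{Matching the conditions.} Under this correspondence, the LHS Frobenius-stability condition $\phi(F^e_*(\ba^{\lceil t(p^e-1)\rceil} J)) \subseteq J$, quantified over all $\phi$ in the first Hom-set, translates via $\phi \leftrightarrow \psi(g \cdot -)$ into the RHS condition $\psi(F^e_*(\mathcal{J}^{[q_e]}\, \ba^{\lceil t(p^e-1)\rceil}\, M)) \subseteq M$, quantified over all $\psi$ in the second Hom-set and all $g \in \mathcal{J}^{[q_e]}$; this is exactly the stability condition defining $\tau_D(\omega_R, (\mathcal{J}\ba^{tm})^{1/m})$ in the sense of Remark~\ref{parameter submodule with ideal}. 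The generic-point conditions on the two sides (meeting $R^{\circ,D}$ on the LHS and agreeing with $\omega_R(D)$ at the generic points of $D$ on the RHS) are equivalent, exactly as in the proof of Lemma~\ref{test ideal interpretation}. Both sides are therefore characterized as the smallest fractional ideal of $R$ satisfying the same family of conditions, and the claimed equality follows.

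\emph{Main obstacle.} The delicate point is the factorization of sections in the key identifications: a priori the sheaf $\sO_X(\lfloor -(p^e-1)(K_X+D+\Delta)\rfloor)$ is strictly larger than $\mathcal{J}^{[q_e]}$, differing by a bounded twist controlled by the residue $s_e$. The hard part will be arguing that ranging over the finitely many auxiliary sections $h'$ indexed by the residues $s_e \in \{0,\dots,m-1\}$ neither enlarges nor shrinks the resulting family of Frobenius-stability conditions. This is exactly where the degree-one generation of the $m$-th Veronese is indispensable: without it, the decomposition $h = g \cdot h'$ would fail to exhaust $\sO_X(\lfloor -(p^e-1)(K_X+D+\Delta)\rfloor)$ and the two conditions could fail to match.
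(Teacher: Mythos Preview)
Your proposal is correct and follows essentially the same route as the paper, which simply refers to \cite[Lemma~5.2]{CEMS}: reduce to the local case, use Grothendieck duality for the iterated Frobenius to identify the two families of Hom-modules, and exploit the degree-one generation of the $m$-th Veronese to factor sections of $\sO_X(\lfloor -(p^e-1)(K_X+D+\Delta)\rfloor)$ through powers of $\mathcal{J}$ up to a bounded defect. Your identification of the ``main obstacle'' (the residues $s_e$ and the resulting bounded discrepancy in exponents, e.g.\ $tmq_e$ versus $\lceil t(p^e-1)\rceil$) is exactly the point where the Veronese hypothesis and the usual test-element/boundedness arguments enter, and this is handled in \cite{CEMS} just as you outline.
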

\begin{proof}
%This follows from an argument similar to \cite[Lemma 5.2]{CEMS}. 
We may assume that $X=\Spec R$, where $R$ is an $F$-finite normal local ring. 
There exists $c\in R^{\circ,D}$ with $\Supp{\Delta}\subseteq \Supp (\Div_X(c))$ such that for every $e_0 \in \N$ and every effective Cartier divisor $B$ with no common components with $D$, one has 
    \[
        \tau_D(X,D+\Delta,\ba^t)=\sum_{e\ge e_0} \mathrm{Tr}_X^e\big(F^e_*\ba^{\lceil tp^e \rceil}\sO_X(\lceil (1-p^e)(K_X+D)-p^e\Delta-\Div_X(c)-B\rceil)\big), 
    \]
where $\mathrm{Tr}_X^e \colon \sO_X((1-p^e)K_X) \to \sO_X$ is the trace map of the $e$-times iterated Frobenius morphism. 
The reader is referred to the proof of \cite[Proposition 6.5]{MSTWW} for the case where $\ba=R$ and $B=0$ (see also \cite[Lemma 3.5]{Tak08}). 
The case where $B \ne 0$ is proved by replacing $c$ with $cd$ for an element $d \in R(-B) \cap R^{\circ, D}$. 
Thanks to this Frobenius trace description of test ideals along divisors, the assertion follows from an argument similar to \cite[Lemma 5.2]{CEMS}. 
\end{proof}

\begin{lem}\label{lemma adjoint ideal for rings with f.g. anti-canonical rings}
Suppose that $X$ is a normal complex variety, $D$ is a reduced divisor and $\Delta$ is an effective $\Q$-Weil divisor on $X$ such that $D$ and $\Delta$ have no common components and 
the $\sO_X$-algebra $\bigoplus_{i\ge 0}\sO_X(\lfloor -i(K_X+D+\Delta)\rfloor)$ is finitely generated.  
Let $\ba\subseteq \sO_X$ be a coherent ideal sheaf whose zero locus contains no components of $D$ and $t>0$ be a real number.  
Let $\rho: X'=\operatorname{\mathbf{Proj}} \bigoplus_{i\ge 0}\sO_X(\lfloor -i(K_X+D+\Delta)\rfloor) \to X$ be the $\Q$-Cartierization of $-(K_X+D+\Delta)$. 
Then
\[
\adj_D(X,D+\Delta,\ba^t)=\rho_* \adj_{\rho^*D}(X',\rho^*(D+\Delta),(\ba \sO_{X'})^t).
\]
\end{lem}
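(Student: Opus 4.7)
The plan is to take a common log resolution $\mu:Y \to X$ factoring as $\mu = \rho \circ \mu'$ with $\mu':Y \to X'$, and to show that both sides of the claimed identity reduce via log-resolution formulas to the same sheaf on $Y$. The key point is that $\rho$ is a small birational morphism for which $\rho^*(K_X+D+\Delta) = K_{X'}+\rho^*(D+\Delta)$ is $\Q$-Cartier on $X'$, so the right-hand side is computed directly by the $\Q$-Cartier formula on $\mu'$, while the left-hand side is computed via the divisors $\adj^{(m)}_D$ introduced in the proof of Proposition \ref{single divisor} for sufficiently divisible $m$; the two formulas turn out to agree on $Y$.

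Concretely, I would fix $m \ge 2$ such that $m(K_X+D+\Delta)$ is integral, $m\rho^*(K_X+D+\Delta)$ is Cartier on $X'$, and the $m$-th Veronese subring of $\bigoplus_{i\ge 0}\sO_X(\lfloor -i(K_X+D+\Delta)\rfloor)$ is generated in degree one. I then take a log resolution $\mu:Y \to X$ factoring as $\mu = \rho \circ \mu'$ and simultaneously serving as a log resolution of $(X, \sO_X(-m(K_X+D+\Delta))\mathfrak{a})$ and of $(X', \rho^*(D+\Delta), \mathfrak{a}\sO_{X'})$, with $\mu^{-1}_*D$ smooth. Because $\rho$ is small, Remark \ref{pullback rem} (4) gives $\rho^*D = \rho^{-1}_*D$, so $K_{X'}+\rho^*(D+\Delta) = \rho^*(K_X+D+\Delta)$ as Weil-divisor classes and ${\mu'}^{-1}_*\rho^*D = \mu^{-1}_*D$. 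Degree-one generation of the $m$-th Veronese makes the natural map from the relative Proj construction an equality of subsheaves of the function field, namely
\[
\sO_X(-m(K_X+D+\Delta))\sO_{X'} = \sO_{X'}(-m\rho^*(K_X+D+\Delta)),
\]
and pulling back via $\mu'$ gives $\sO_X(-m(K_X+D+\Delta))\sO_Y = \sO_Y(-m\mu^*(K_X+D+\Delta))$, where I set $\mu^*(K_X+D+\Delta) := {\mu'}^*\rho^*(K_X+D+\Delta)$. In the notation of the proof of Proposition \ref{single divisor} this reads $G_m = m\mu^*(K_X+D+\Delta)$.

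Consequently
\[
\adj_D^{(m)}(X,D+\Delta,\mathfrak{a}^t) = \mu_*\sO_Y\bigl(K_Y - \lfloor \mu^*(K_X+D+\Delta)+tF \rfloor + \mu^{-1}_*D\bigr),
\]
and as this expression is stable under replacing $m$ by a suitable multiple, it equals $\adj'_D(X,D+\Delta,\mathfrak{a}^t)$ and hence, by Proposition \ref{single divisor}, equals $\adj_D(X,D+\Delta,\mathfrak{a}^t)$. Meanwhile, since $K_{X'}+\rho^*(D+\Delta)$ is $\Q$-Cartier, the $\Q$-Cartier log-resolution formula applied to $\mu': Y \to X'$ yields
\[
\adj_{\rho^*D}\bigl(X',\rho^*(D+\Delta),(\mathfrak{a}\sO_{X'})^t\bigr) = \mu'_*\sO_Y\bigl(K_Y - \lfloor \mu^*(K_X+D+\Delta)+tF \rfloor + \mu^{-1}_*D\bigr),
\]
and applying $\rho_*$ with $\rho_*\mu'_* = \mu_*$ gives the claimed equality. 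The main obstacle I anticipate is the identification $\sO_X(-m(K_X+D+\Delta))\sO_{X'} = \sO_{X'}(-m\rho^*(K_X+D+\Delta))$ via the relative Proj construction; this is where the hypothesis of finite generation of the anticanonical algebra enters substantively, and it is the analogue of the computation carried out in \cite[Section 5]{CEMS} in the multiplier-ideal setting.
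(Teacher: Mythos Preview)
Your proposal is correct and follows essentially the same approach the paper indicates: the paper's proof simply points to \cite[Corollary 2.25]{CEMS} with $\J_m$ replaced by $\adj_D^{(m)}$ from Proposition~\ref{single divisor}, and your argument is precisely a detailed unpacking of that strategy (common log resolution factoring through $\rho$, the key identity $\sO_X(-m(K_X+D+\Delta))\sO_{X'}=\sO_{X'}(-m\rho^*(K_X+D+\Delta))$ from degree-one generation of the Veronese, and stabilization of $\adj_D^{(m)}$ for sufficiently divisible $m$). The step you flag as the main obstacle is exactly the computation in \cite[Section 5]{CEMS} you cite, so there is no gap.
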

\begin{proof}
Since $\rho$ is a small birational morphism, $\rho^* D$ is reduced.
An argument analogous to that in \cite[Corollary 2.25]{CEMS} is applicable here, by utilizing $\adj^{(m)}_D(X,D+\Delta,\ba^t)$ in the proof of Proposition \ref{single divisor} instead of $\J_m(X,\Delta,\ba^t)$. 
\end{proof}

We now briefly explain the method for reducing triples $(X, D, \ba)$, consisting of varieties, divisors and ideal sheaves, from characteristic zero to positive characteristic. 
Our main reference is \cite[Chapter 2]{HH}.

Let $X$ be a normal variety over a field $k$ of characteristic zero, $D=\sum_i d_i D_i$ be an $\Q$-Weil divisor on $X$ and $\ba \subseteq \sO_X$ be a nonzero coherent ideal sheaf. 
Choosing a suitable finitely generated $\Z$-subalgebra $A$ of $k$, we can construct a scheme $X_A$ of finite type over $A$ and closed subschemes $D_{i, A} \subseteq X_A$ such that 
there exist isomorphisms
\[\xymatrix{
X \ar[r]^{\cong \hspace*{3.5em}} &  X_A \times_{\Spec A} \Spec k\\
D_i \ar[r]^{\cong \hspace*{3.5em}} \ar@{^{(}->}[u] & D_{i, A} \times_{\Spec A} \Spec k, \ar@{^{(}->}[u]\\
}\]
and set $\ba_A:=\rho_* \ba \cap \sO_{X_A}$, where $\rho:X \to X_A$ is the projection. 
We can enlarge $A$ by localizing at a single nonzero element and  subsequently replace $X_A$ and $D_{i,A}$ with their corresponding open subschemes. 
This enables us to assume that $X_A, D_{i, A}$ and $\ba_A$ are flat over $\Spec A$ due to generic freeness. 
Further enlarging $A$ if necessary, we can then assume that $X_A$ is a normal integral scheme, $D_{i,A}$ is a prime divisor on $X_A$ and $\ba_A \sO_X=\ba$. 
We refer to the triple $(X_A, D_A:=\sum_i d_i D_{i, A}, \ba_A)$ as a \textit{model} of $(X, D, \ba)$ over $A$. 
Similarly, we can also consider models of algebras of finite type over $k$. 

Given an closed point $\mu \in \Spec A$, let $X_{\mu}$ (resp.~$D_{i, \mu}$) denote the fiber of $X_A \to \Spec A$ (resp.~$D_{i, A} \to \Spec A$) over $\mu$, and set $D_{\mu}=\sum_i D_{i, \mu}$ and $\ba_{\mu}=\ba_A \sO_{X_{\mu}}$. 
Then $X_{\mu}$ is a scheme of finite type over the finite field $A/\mu$. 
Furthermore, $X_{\mu}$ is a normal variety over $A/\mu$ and $D_{\mu}$ is a $\Q$-Weil divisor on $X_{\mu}$ for general closed points $\mu \in \Spec A$.

\begin{thm}\label{correspondence}
With notation as in Lemma \ref{lemma adjoint ideal for rings with f.g. anti-canonical rings}, suppose that $t$ is a rational number. 
Given a model over a finitely generated $\Z$-subalgebra $A$ of $\C$, we have 
\[
\adj_D(X,D+\Delta,\ba^t)_\mu=\tau_{D_\mu}(X_\mu,D_\mu+\Delta_\mu, \ba_\mu^t)
\]
for general closed points $\mu \in \Spec A$.
\end{thm}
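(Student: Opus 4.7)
The plan is to bootstrap from the $\Q$-Cartier case, namely Takagi's correspondence \cite[Theorem 5.3]{Tak08}, to the general setting via the anti-canonical $\Q$-Cartierization $\rho: X' \to X$. The finite generation hypothesis on $\bigoplus_{i\ge 0}\sO_X(\lfloor -i(K_X+D+\Delta)\rfloor)$ guarantees that $\rho$ is a small proper birational morphism and that $K_{X'}+\rho^*D+\rho^*\Delta$ is $\Q$-Cartier, so the classical theory governs everything on $X'$.

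First, I would enlarge $A$ so that a model $\rho_A:X'_A\to X_A$ of $\rho$ exists and base-changes compatibly: for general closed $\mu\in\Spec A$, the induced $\rho_\mu:X'_\mu\to X_\mu$ is itself a small proper birational morphism realizing the anti-canonical $\Q$-Cartierization of $X_\mu$, and $\rho_{\mu*}$ commutes with the reduction $(-)_\mu$ on the sheaves we need (by generic freeness and generic vanishing of higher direct images). With these models in place, Lemma \ref{lemma adjoint ideal for rings with f.g. anti-canonical rings} gives
\[
\adj_D(X,D+\Delta,\ba^t)_\mu = \rho_{\mu*}\adj_{\rho^*D}(X',\rho^*(D+\Delta),(\ba\sO_{X'})^t)_\mu,
\]
and because $K_{X'_\mu}+\rho_\mu^*D_\mu+\rho_\mu^*\Delta_\mu$ is $\Q$-Cartier, Takagi's $\Q$-Cartier correspondence \cite[Theorem 5.3]{Tak08} identifies the right-hand side with
\[
\rho_{\mu*}\tau_{\rho_\mu^*D_\mu}(X'_\mu,\rho_\mu^*(D_\mu+\Delta_\mu),(\ba_\mu\sO_{X'_\mu})^t)
\]
for general $\mu$.

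It then suffices to prove a positive-characteristic analog of Lemma \ref{lemma adjoint ideal for rings with f.g. anti-canonical rings}:
\[
\tau_{D_\mu}(X_\mu, D_\mu+\Delta_\mu, \ba_\mu^t) \;=\; \rho_{\mu*}\tau_{\rho_\mu^*D_\mu}(X'_\mu,\rho_\mu^*(D_\mu+\Delta_\mu),(\ba_\mu\sO_{X'_\mu})^t).
\]
My plan is to apply Lemma \ref{lemma divisorial test ideal = divisorial test submodule} on both sides, recasting the test ideals as parameter test submodules inside $\omega_{X_\mu}(D_\mu)$ and $\omega_{X'_\mu}(\rho_\mu^*D_\mu)$, twisted by $\sO(-m(K+D+\Delta))\ba^{tm}$ raised to the power $1/m$ for a suitable divisible $m$. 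Since $\rho_\mu$ is small and birational, $\rho_{\mu*}\omega_{X'_\mu}\cong \omega_{X_\mu}$ and $\rho_{\mu*}\sO_{X'_\mu}(\rho_\mu^*D_\mu)\cong\sO_{X_\mu}(D_\mu)$, while the reflexive hull of $\rho_\mu^*\sO_{X_\mu}(-m(K_{X_\mu}+D_\mu+\Delta_\mu))$ is the invertible sheaf $\sO_{X'_\mu}(-m(K_{X'_\mu}+\rho_\mu^*D_\mu+\rho_\mu^*\Delta_\mu))$. Using Grothendieck duality along $F^e$, one then matches the relevant $\Hom$-modules on $X_\mu$ with $\rho_{\mu*}$ of their counterparts on $X'_\mu$, so that the extremality condition defining the parameter test submodule downstairs is exchanged with that upstairs under $\rho_{\mu*}$.

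The main obstacle lies in this last step: verifying that the universal property of parameter test submodules survives push-forward along a small birational morphism. This requires careful bookkeeping of Grothendieck duality through the Frobenius, control of reflexive hulls of the twisting sheaves $\sO(-m(K+D+\Delta))\cdot\ba^{tm}$ under $\rho_\mu^*$, and the vanishing $R^i\rho_{\mu*}$ for $i>0$ on divisorial sheaves, which may necessitate shrinking to an open locus or enlarging $A$ one more time. Once this push-forward compatibility is in hand, combining it with the Q-Cartier case on $X'_\mu$ and Lemma \ref{lemma adjoint ideal for rings with f.g. anti-canonical rings} on $X$ yields the theorem.
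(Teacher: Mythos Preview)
Your proposal is correct and follows essentially the same route as the paper: both reduce to the $\Q$-Cartier case on the anti-canonical $\Q$-Cartierization $X'$ via Lemma \ref{lemma adjoint ideal for rings with f.g. anti-canonical rings}, invoke the known correspondence there, and then use Lemma \ref{lemma divisorial test ideal = divisorial test submodule} to match the push-forward of the test ideal on $X'_\mu$ with the test ideal on $X_\mu$; the paper simply packages this as ``an argument similar to \cite[Theorem 6.4]{CEMS}'' rather than spelling out the push-forward compatibility you describe. One minor caveat: you do not need $\rho_\mu$ to literally be \emph{the} anti-canonical $\Q$-Cartierization of $X_\mu$, only that it is small birational with $K_{X'_\mu}+\rho_\mu^*(D_\mu+\Delta_\mu)$ $\Q$-Cartier, which follows for general $\mu$ after enlarging $A$.
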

\begin{proof}
By virtue of Lemmas \ref{lemma adjoint ideal for rings with f.g. anti-canonical rings} and \ref{lemma divisorial test ideal = divisorial test submodule}, this follows from an argument similar to \cite[Theorem 6.4]{CEMS}. 
\end{proof}
We remark that flatness is preserved under reduction modulo $p$. 

\begin{prop}\label{proposition redutions mod p>0 flat local}
Let $g:B\to C$ be a $\C$-algebra homomorphism between rings of finite type over $\C$, and $\q$ be a prime ideal of $C$. Moreover, set $\p=\q\cap B$, $R=B_\p$ and $S=C_\q$, and suppose that the induced local ring homomorphism $g_{\p}:R\to S$ is flat. 
Given a model over a finitely generated $\Z$-subalgebra $A$ of $\C$, 
the local ring homomorphism $g_{\p, \mu}:R_\mu \to S_\mu$ is flat for general closed points $\mu \in \Spec A$. 
\end{prop}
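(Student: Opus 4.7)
The plan is a standard spreading-out argument: reduce the flatness of the local maps to the global setting, spread the global flatness out to a model over $A$, and then base-change to fibers. First I would globalize the flatness hypothesis. Since $R=B_\p$ is a localization of $B$, it is $B$-flat, so an $R$-module is $B$-flat if and only if it is $R$-flat; hence flatness of $g_\p:R\to S$ is equivalent to $S=C_\q$ being flat over $B$. Openness of the flat locus (EGA IV$_3$, Th\'eor\`eme 11.1.1) applied to the finite-type morphism $\Spec C\to\Spec B$ then produces $h\in C\setminus\q$ such that $C_h$ is flat over $B$.

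The key step is to spread this flatness out. After possibly enlarging the given $A$ (which is harmless since the conclusion is only claimed for general closed $\mu$), we may include a lift $h_A\in C_A$ of $h$ in the model. By generic freeness, we may further enlarge $A$ so that $B_A$, $C_A$, and $(C_A)_{h_A}$ are all flat over $A$. To promote $(C_A)_{h_A}$ to being flat over $B_A$, I would apply openness of the flat locus to the morphism $\Spec(C_A)_{h_A}\to\Spec B_A$: the non-flat locus $Z$ is closed, and its base change to $\Spec \C$ is empty because $B\to C_h$ is flat. Since $\Spec\C\to\Spec A$ factors through the generic point of $\Spec A$ (as $A$ injects into its fraction field, which in turn injects into $\C$), the image of $Z$ in $\Spec A$ is a constructible set that omits the generic point, and hence is contained in a proper closed subset. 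Inverting one more nonzero element of $A$ makes $Z=\emptyset$, so $(C_A)_{h_A}$ is flat over $B_A$.

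For general closed $\mu\in\Spec A$, base change then yields that $(C_\mu)_{h_\mu}$ is flat over $B_\mu$. Since $h\notin\q$, we may arrange that $h_\mu\notin\q_\mu$ for general $\mu$, so $S_\mu=(C_\mu)_{\q_\mu}$ is a localization of $(C_\mu)_{h_\mu}$ and hence flat over $B_\mu$; because $R_\mu=(B_\mu)_{\p_\mu}$ is a localization of $B_\mu$ and $g_{\p,\mu}$ is local, flatness of $S_\mu$ over $R_\mu$ is equivalent to flatness over $B_\mu$, which we have. I expect the main obstacle to be the middle step: carefully justifying that the non-flat locus $Z$, closed and disjoint from the fiber over $\Spec\C$, projects into a proper closed subset of $\Spec A$. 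This hinges on the embedding $A\hookrightarrow\C$ factoring through $\Frac(A)$ together with Chevalley's theorem on constructibility of images under finite-type morphisms.
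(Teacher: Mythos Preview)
Your argument is correct but takes a somewhat different route from the paper's proof. Both begin identically: openness of the flat locus lets one replace $C$ by a localization so that $B\to C$ is globally flat. The divergence is in the spreading-out step. The paper proceeds homologically: from flatness of $B\to C$ one has $\operatorname{Tor}_1^{B}(B/\p,C)=0$, so after enlarging $A$ one may assume $\operatorname{Tor}_1^{B_A}(B_A/\p_A,C_A)=0$; then a single citation (\cite[Theorem 2.3.5(e)]{HH}) transports this Tor-vanishing to each closed fiber, and the local flatness criterion gives flatness of $R_\mu\to S_\mu$. You instead spread out flatness geometrically, applying openness of the flat locus a second time to $\Spec(C_A)_{h_A}\to\Spec B_A$ and using faithfully flat descent along $\Frac(A)\hookrightarrow\C$ plus Chevalley's theorem to kill the non-flat locus after inverting an element of $A$. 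The paper's argument is shorter and leans on a ready-made reduction lemma from Hochster--Huneke; yours avoids that citation and is more self-contained, at the price of a slightly heavier constructibility argument. One small point of precision in your write-up: the claim ``its base change to $\Spec\C$ is empty'' is really the statement that the generic fiber of $Z$ over $\Spec A$ is empty, which you correctly justify via descent along $\Frac(A)\hookrightarrow\C$ in the final paragraph---it would be cleaner to phrase it that way from the start.
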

\begin{proof}
Since the flat locus of $g:B\to C$ is open, by localizing $C$ at a nonzero element if necessary, we may assume that $g$ is flat. 
Then, enlarging $A$ if necessary,  we may assume that $\operatorname{Tor}_1^{B_A}(B_A/\p_A,C_A)=0$. 
It follows from \cite[Theorem 2.3.5 (e)]{HH} that 
\begin{align*}
	\operatorname{Tor}_1^{R_\mu}(\kappa(\p_\mu),S_\mu) &= \operatorname{Tor}_1^{B_\mu}(B_\mu/\p_\mu,C_\mu) \otimes_{C_\mu} S_\mu \\
		&= \operatorname{Tor}_1^{B_A}(B_A/\p_A,C_A)\otimes_{C_A} S_\mu \\
		&= 0, 
\end{align*}
which implies that $g_{\p, \mu}:R_\mu \to S_\mu$ is flat, 
for general closed points $\mu \in \Spec A$. 
\end{proof}

The following lemma is seemingly well-known to experts. However, we include it here due to the lack of a direct reference.

\begin{lem}\label{Ann flat base change}
	$R\to S$ be a flat local ring homomorphism between Noetherian local rings and $M$ be an $R$-module. If $R$ is complete, then we have
	\[
		(\Ann_{R}M)S=\Ann_{S} (M\otimes_R S).
	\]
\end{lem}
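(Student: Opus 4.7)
The containment $(\Ann_R M) S \subseteq \Ann_S(M \otimes_R S)$ is immediate. The plan for the reverse containment is to rewrite both sides as intersections of annihilators of single elements, reduce to a question about arbitrary families of ideals of $R$, and then apply a Chevalley-type result together with Krull's intersection theorem in $S$.

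First, since $S$ is flat over $R$, tensoring the short exact sequence $0 \to \Ann_R(x) \to R \to Rx \to 0$ with $S$ yields $\Ann_S(x \otimes 1) = \Ann_R(x) \cdot S$ for every $x \in M$, so
\[
\Ann_S(M \otimes_R S) = \bigcap_{x \in M} \Ann_S(x \otimes 1) = \bigcap_{x \in M} \Ann_R(x) \cdot S,
\]
while $(\Ann_R M) S = \bigl(\bigcap_{x \in M} \Ann_R(x)\bigr) S$. Passing to the quotients $R/\Ann_R(M)$ and $S/(\Ann_R M) S$ (both of which remain Noetherian local, with the induced map still flat local and with $R/\Ann_R(M)$ still complete), we reduce to the following assertion: for any family $\{J_\alpha\}$ of ideals of a complete Noetherian local ring $R$ with $\bigcap_\alpha J_\alpha = 0$, and any flat local homomorphism from $R$ to a Noetherian local ring $S$, one has $\bigcap_\alpha J_\alpha S = 0$.

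The crucial ingredient is an extension of Chevalley's lemma to arbitrary families of ideals: writing $\m$ for the maximal ideal of $R$, for every $k \ge 1$ there exist finitely many indices $\alpha_1, \dots, \alpha_r$ with $\bigcap_{i=1}^r J_{\alpha_i} \subseteq \m^k$. To prove this, set $L_n := \bigcap_\alpha (J_\alpha + \m^n)$. Krull's intersection theorem gives $\bigcap_n L_n = \bigcap_\alpha \bigcap_n (J_\alpha + \m^n) = \bigcap_\alpha J_\alpha = 0$, so the classical Chevalley lemma applied to the descending chain $\{L_n\}$ in the complete Noetherian local ring $R$ yields, for any given $k$, an integer $n$ with $L_n \subseteq \m^k$. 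Since $R/\m^n$ is Artinian, the family $\{(J_\alpha + \m^n)/\m^n\}$ satisfies the descending chain condition, hence its intersection equals a finite sub-intersection; lifting shows $L_n = \bigcap_{i=1}^r (J_{\alpha_i} + \m^n)$ for some $\alpha_1, \dots, \alpha_r$, and in particular $\bigcap_{i=1}^r J_{\alpha_i} \subseteq L_n \subseteq \m^k$.

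Granted this Chevalley-type statement, the proof concludes rapidly. Writing $\n$ for the maximal ideal of $S$, for each $k$ I pick $\alpha_1, \dots, \alpha_r$ as above; flatness of $S$ over $R$ ensures that extension commutes with finite intersections, so
\[
\bigcap_\alpha J_\alpha S \subseteq \bigcap_{i=1}^r J_{\alpha_i} S = \Bigl(\bigcap_{i=1}^r J_{\alpha_i}\Bigr) S \subseteq \m^k S \subseteq \n^k.
\]
Intersecting over $k$ and invoking Krull's intersection theorem in the Noetherian local ring $S$ gives $\bigcap_\alpha J_\alpha S = 0$, as required. The main obstacle is precisely the Chevalley-type step: the index set $\{x \in M\}$ can be uncountable and the family $\{\Ann_R(x)\}$ need not be totally ordered, so one cannot apply the classical lemma directly; the combined use of Krull's theorem, Chevalley's lemma for the derived descending chain $\{L_n\}$, and the Artinian property of $R/\m^n$ circumvents this. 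Completeness of $R$ is essential here---the classical Chevalley lemma fails without it.
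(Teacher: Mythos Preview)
Your proof is correct and follows the same overall architecture as the paper's: both reduce $\Ann_S(M\otimes_R S)$ to $\bigcap_{x\in M}(\Ann_R x)S$ via flatness applied to the sequence $0\to\Ann_R x\to R\to Rx\to 0$, and both then need the \emph{intersection flatness} property $\bigl(\bigcap_\alpha J_\alpha\bigr)S=\bigcap_\alpha(J_\alpha S)$ for arbitrary families of ideals. The difference is that the paper simply invokes \cite[Proposition 5.7 (e)]{HJ} for this step, whereas you supply a self-contained proof: you pass to the quotient so that $\bigcap_\alpha J_\alpha=0$, prove a Chevalley-type statement for arbitrary (not necessarily nested) families by first showing $\bigcap_n\bigcap_\alpha(J_\alpha+\m^n)=0$ via Krull, applying classical Chevalley to the resulting descending chain, and exploiting the Artinian property of $R/\m^n$ to extract a finite subfamily; you then finish with finite-intersection flatness and Krull in $S$. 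Your argument is more elementary in that it avoids the external reference and makes the role of completeness transparent, while the paper's version is shorter. Both are valid and lead to the same conclusion.
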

\begin{proof}
For each element $x$ of $M$, tensoring the exact sequence
\[
0 \to \Ann_{R}x \to R\xrightarrow{\cdot x} M
\]
with $S$ yield an exact sequence
\[
0 \to (\Ann_{R}x)S \to S \xrightarrow{\cdot (x\otimes 1)} M\otimes_R S, 
\]
which implies that $(\Ann_{R} x)S=\Ann_{S} (x\otimes 1)$.
Therefore, we have
	\begin{align*}
		(\Ann_R M)S = \left(\bigcap_{x\in M} \Ann_{R} x \right)S 
		&= \bigcap_{x\in M} \left( (\Ann_{R}x) S \right) \\
		&= \bigcap_{x\in M} \Ann_{S}(x\otimes 1) \\
		&= \Ann_{S}(M\otimes_R S),
	\end{align*}
	where the second equality follows from the fact that the homomorphism $R \to S$ is intersection flat by \cite[Proposition 5.7 (e)]{HJ}. 
	Here, a ring homomorphism $A \to B$ is said to be \textit{intersection flat} if it is flat and satisfies the following condition: for every finitely generated $A$-module $M$ and every family of submodules $\{M_{\lambda}\}_{\lambda \in \Lambda}$ of $M$, we have $B \otimes_A \left(\bigcap_{\lambda} M_{\lambda} \right)=\bigcap_{\lambda} (B \otimes_A M_\lambda)$ as submodules of $B \otimes_A M$. 
\end{proof}
\begin{prop} \label{proposition flat descent of divisorial test ideals}
Let $(R,\m)\to (S,\n)$ be a flat local homomorphism between $F$-finite normal local rings of characteristic $p>0$, and let $\phi:\Spec S \to \Spec R$ denote the corresponding morphism. 
Let $D$ be a reduced divisor and $\Gamma$ be an effective $\Q$-Weil divisor on $X:=\Spec R$ such that $D$ and $\Gamma$ have no common components. 
Suppose that the flat pullback $E:=\phi^* D$ of $D$ under $\phi$ is a reduced divisor on $Y:=\Spec S$. 
For any ideal $\ba\subseteq R$ whose zero locus contains no components of $D$ and for any real number $t >0$, one has 
\[
\tau_E(S,E+\phi^* \Gamma,(\ba S)^t)\subseteq \tau_D(R,D+\Gamma, \ba^t)S.
\]
\end{prop}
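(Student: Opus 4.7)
I will use the minimality characterization: $\tau_E(S, E+\phi^*\Gamma, (\ba S)^t)$ is the unique smallest ideal $J\subseteq S$ satisfying
(a$'$) $J\cap S^{\circ,E}\ne\emptyset$, and
(b$'$) $\psi(F^e_*((\ba S)^{\lceil t(p^e-1)\rceil}J))\subseteq J$ for every $e\ge 0$ and $\psi\in\Hom_S(F^e_*S(\lceil(p^e-1)(E+\phi^*\Gamma)\rceil),S)$.
It suffices to verify that $J:=\tau_D(R,D+\Gamma,\ba^t)S$ satisfies (a$'$) and (b$'$).

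Condition (a$'$) is immediate from going-down. Any $c\in\tau_D(R,D+\Gamma,\ba^t)\cap R^{\circ,D}$ is non-zero at each minimal prime of $I_D$. Since $\phi$ is flat and $E=\phi^{\natural}D=\phi^*D$, every minimal prime of $I_E$ contracts to a minimal prime of $I_D$, and hence $c\in S^{\circ,E}$.

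For (b$'$), the key idea is to restrict a given $\psi$ to an $R$-linear map and then use flat base change for $\Hom$. Fix $\psi$, $a\in\ba^{\lceil t(p^e-1)\rceil}$, $x\in\tau_D(R,D+\Gamma,\ba^t)$, and $s\in S$, and consider the $R$-linear map $\psi_s:F^e_*R\to S$ given by $F^e_*r\mapsto\psi(F^e_*(rs))$. Under Grothendieck duality (reflexively, and hence equivalently in codimension one),
\[
\Hom_S(F^e_*S(\lceil(p^e-1)(E+\phi^*\Gamma)\rceil),S)\;\cong\;F^e_*S(\lfloor(1-p^e)(K_S+E+\phi^*\Gamma)\rfloor),
\]
and $\psi$ corresponds to a section $\beta$ of the right-hand side. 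Writing $K_S=\phi^*K_R+K_{S/R}$ with $K_{S/R}$ the relative canonical class, and using $E=\phi^*D$,
\[
(1-p^e)(K_S+E+\phi^*\Gamma)\;=\;(1-p^e)\phi^*(K_R+D+\Gamma)+(1-p^e)K_{S/R}.
\]
Since $K_{S/R}$ is effective in codimension one (its codimension-one support is the ramification divisor of the flat map $\phi$, with non-negative coefficients) and $1-p^e\le 0$, the section $\beta$ in fact lies in the subsheaf $F^e_*\bigl(R(\lfloor(1-p^e)(K_R+D+\Gamma)\rfloor)\otimes_R S\bigr)$, which by flat base change for $\Hom$ corresponds to $\Hom_R(F^e_*R(\lceil(p^e-1)(D+\Gamma)\rceil),R)\otimes_R S$. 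So we may write $\psi_s=\sum_j \varphi_j\otimes s_j$ with $\varphi_j\in\Hom_R(F^e_*R(\lceil(p^e-1)(D+\Gamma)\rceil),R)$. Applying the defining property of $\tau_D(R,D+\Gamma,\ba^t)$ yields $\varphi_j(F^e_*(ax))\in\tau_D(R,D+\Gamma,\ba^t)$ for each $j$, hence
\[
\psi(F^e_*(axs))\;=\;\psi_s(F^e_*(ax))\;=\;\sum_j s_j\,\varphi_j(F^e_*(ax))\;\in\;\tau_D(R,D+\Gamma,\ba^t)S\,,
\]
which gives (b$'$). Minimality then yields $\tau_E(S,E+\phi^*\Gamma,(\ba S)^t)\subseteq\tau_D(R,D+\Gamma,\ba^t)S$.

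The main obstacle is ensuring that $\psi_s$ genuinely lives in the twisted subspace $\Hom_R(F^e_*R(\lceil(p^e-1)(D+\Gamma)\rceil),R)\otimes_R S$ rather than merely in $\Hom_R(F^e_*R,R)\otimes_R S$, because the naive extension of scalars $R(\lceil(p^e-1)(D+\Gamma)\rceil)\cdot s\subseteq S(\lceil(p^e-1)(E+\phi^*\Gamma)\rceil)$ can fail at components of $\Gamma$ where $\phi$ is ramified. The resolution goes through the duality identification and the codimension-one effectivity of $K_{S/R}$ described above, which is where the hypothesis that $E=\phi^*D$ is reduced enters decisively (along the components of $D$ the map $\phi$ is unramified, and along the remaining ramified codimension-one loci $K_{S/R}$ contributes positively, tilting the twist in our favor).
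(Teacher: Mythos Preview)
Your approach via the minimality characterization is genuinely different from the paper's, which works on the tight-closure side: after reducing to the complete case, the paper uses the inclusion $R((p^e-1)D+\lceil p^e\Gamma\rceil)\hookrightarrow S((p^e-1)E+\lceil p^e\phi^*\Gamma\rceil)$ together with $R^{\circ,D}\subseteq S^{\circ,E}$ to show $0_M^{*_D(D+\Gamma,\ba^t)}\otimes_R S\subseteq 0_{M\otimes_R S}^{*_E(E+\phi^*\Gamma,(\ba S)^t)}$ for every $R$-module $M$, and then passes to annihilators using intersection flatness of $R\hookrightarrow S$ (guaranteed by completeness). This dual route never compares Cartier algebras.

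The gap in your argument is the claim that $\psi_s$ lies in $\Hom_R(F^e_*R(\lceil(p^e-1)(D+\Gamma)\rceil),R)\otimes_R S$. Two issues arise. First, the assertion that $K_{S/R}$ is effective because ``its codimension-one support is the ramification divisor of the flat map $\phi$'' is not correct: a flat morphism with positive-dimensional fibers has no ramification divisor in the classical sense, and the relative canonical class is not tied to ramification. (Locally one can \emph{choose} representatives so that $K_{S/R}\ge 0$, but this is for a different reason and the choice must be made compatibly with the duality isomorphisms, which you do not address.) Second, and more fundamentally, your identification of $F^e_*\bigl(R(\lfloor(1-p^e)(K_R+D+\Gamma)\rfloor)\otimes_R S\bigr)$ with $\Hom_R(F^e_*R(\lceil(p^e-1)(D+\Gamma)\rceil),R)\otimes_R S$ ``by flat base change for $\Hom$'' conflates the absolute Frobenius on $S$ with the base change of the Frobenius on $R$. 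The restriction map $\Hom_S(F^e_*S,S)\to\Hom_R(F^e_*R,S)\cong\Hom_R(F^e_*R,R)\otimes_R S$ is precomposition with the relative Frobenius $F^e_{S/R}\colon F^e_*R\otimes_R S\to F^e_*S$, which is far from an isomorphism when $\phi$ has positive-dimensional fibers. Tracking how the section $\beta\in S(\lfloor(1-p^e)(K_S+E+\phi^*\Gamma)\rfloor)$ corresponding to $\psi$ maps under this restriction therefore requires more than the divisorial inequality you write down; even granting $K_{S/R}\ge 0$, one does not obtain the needed inclusion $S(\lfloor(1-p^e)(K_S+E+\phi^*\Gamma)\rfloor)\subseteq R(\lfloor(1-p^e)(K_R+D+\Gamma)\rfloor)\otimes_R S$ in a way compatible with the two duality isomorphisms. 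The paper's tight-closure argument sidesteps all of this.
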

\begin{proof}
The inclusion $R \hookrightarrow S$ induces the flat injective local ring homomorphism $\widehat{R} \hookrightarrow \widehat{S}$ (see, for example, \cite[0C4G]{Sta}), and let $\widehat{\varphi}: \Spec \widehat{S} \to \Spec \widehat{R}$ denote the corresponding morphism. 
Then we have the commutative diagram
\[
\xymatrix{
\Spec \widehat{S} \ar[r]^{\widehat{\varphi}} \ar[d]_{\iota_S} & \Spec \widehat{R} \ar[d]^{\iota_R} \\
\Spec S \ar[r]^{\varphi} & \Spec R,
}
\]
where $\iota_R : \Spec \widehat{R} \to \Spec R$ and $\iota_S:\Spec \widehat{S} \to \Spec S$ are the canonical morphisms,  and therefore, $\widehat{\varphi}^* \iota_R^*D = \iota_S^*E$. 
Note that both $\iota_R^*D$ and $\iota_S^*E$ are reduced divisors. 
Since the formation of test ideals along divisors commutes with completion, we can reduce the problem to the case where both $R$ and $S$ are complete. 

For each integer $e \ge 1$, the inclusion $R \hookrightarrow S$ induces an inclusion 
\[R((p^e-1)D+\lceil p^e\Gamma \rceil) \hookrightarrow S((p^e-1)E+\lceil p^e f^*\Gamma \rceil).\]
 Additionally, it induces a containment $R^{\circ, D} \subseteq S^{\circ, E}$. 
This follows from the fact that every irreducible component of $E$ dominates an irreducible component of $D$ by the flatness of $\varphi$. 
It is then easy to see that $0_M^{*_D(D+\Gamma, \ba^t)}\otimes_R S \subseteq 0_{M\otimes_R S}^{*_E(E+f^*\Gamma, (\ba S)^t)}$ for all $R$-modules $M$, 
which implies that 
\begin{align*}
\bigcap_{M} \Ann_{S}(0_M^{*_D(D+\Gamma, \ba^t)}\otimes_R S) &\supseteq \bigcap_{M} \Ann_{S}(0_{M\otimes_R S}^{*_E(E+f^*\Gamma, (\ba S)^t)}) \\
& \supseteq \bigcap_{N} \Ann_{S}0_N^{*_E(E+f^*\Gamma, (\ba S)^t)}\\
&=\tau_E(S, E+f^*\Gamma, (\ba S)^t), 
\end{align*}
where $M$ (resp.~$N$) runs through all $R$-modules (resp.~$S$-modules). 
On the other hand, $R \hookrightarrow S$ is intersection flat by the completeness of $R$ and  \cite[Proposition 5.7 (e)]{HJ}, and consequently, 
\begin{align*}
\bigcap_{M} \Ann_{S}(0_M^{*_D(D+\Gamma, \ba^t)}\otimes_R S) &= \bigcap_{M} \left((\Ann_{R}0_{M}^{*_D(D+\Gamma, \ba^t)})S\right) \\
&=\left(\bigcap_{M} \Ann_{R}0_{M}^{*_D(D+\Gamma, \ba^t)}\right)S \\
&=\tau_D(R, D+\Gamma, \ba^t)S,   
\end{align*}
where the first equality follows from Lemma \ref{Ann flat base change}.
Thus, we obtain the desired containment. 
\end{proof}

Here is the main result of this section. 
\begin{thm}\label{faithfully flat thm}
Let $f:Y \to X$ be a faithfully flat morphism between normal complex varieties. 
Let $D$ be a reduced divisor and $\Gamma$ be an effective $\Q$-Weil divisor on $X$ such that $D$ and $\Gamma$ have no common components. 
Suppose that the flat pullback $E:=f^*D$ of $D$ under $f$ is a reduced divisor on $Y$ and the $\sO_X$-algebra $\bigoplus_{i\ge 0} \sO_X(\lfloor -i(K_X+D+\Gamma)\rfloor)$ is finitely generated. 
For any coherent ideal sheaf $\ba\subseteq \sO_X$ whose zero locus contains no components of $D$
and for any real number $t >0$, one has 	
\[
\adj_E(Y,E+f^* \Gamma,(\ba \sO_Y)^t) \subseteq \adj_D(X,D+\Gamma,\ba^t) \sO_Y.
\]
\end{thm}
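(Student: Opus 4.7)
The strategy is to reduce to positive characteristic, invoke the flat descent of test ideals along divisors (Proposition \ref{proposition flat descent of divisorial test ideals}), and then convert back via Theorem \ref{correspondence}. Since the assertion is local on $Y$, I may assume that $f:Y=\Spec S\to X=\Spec R$ corresponds to a flat local homomorphism $(R,\m)\to (S,\n)$ of normal local rings essentially of finite type over $\C$; by a standard approximation I may also assume that $t$ is rational.

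The first substantive step is to apply Proposition \ref{single divisor} to the triple $(Y,E+f^{*}\Gamma,(\ba\sO_Y)^t)$: this produces an effective $\Q$-Weil divisor $\Delta'$ on $Y$ with no component equal to $E$, such that $K_Y+E+f^{*}\Gamma+\Delta'$ is $\Q$-Cartier and
\[
\adj_E(Y,E+f^{*}\Gamma,(\ba\sO_Y)^t)=\adj_E(Y,E+f^{*}\Gamma+\Delta',(\ba\sO_Y)^t).
\]
This step is crucial: the $\Q$-Cartierness of $K_Y+E+f^{*}\Gamma+\Delta'$ immediately forces the associated anti-canonical algebra on $Y$ to be finitely generated, placing the left-hand side within the scope of Theorem \ref{correspondence}; the analogous finite generation on $X$ is granted by hypothesis.

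Now choose a model of all the data $(R,S,D,E,\Gamma,\Delta',\ba)$ over a finitely generated $\Z$-subalgebra $A$ of $\C$. For a general closed point $\mu\in\Spec A$, Proposition \ref{proposition redutions mod p>0 flat local} ensures that $f_\mu:R_\mu\to S_\mu$ is flat local; moreover, after enlarging $A$ if necessary, $E_\mu=f_\mu^{*}D_\mu$ is reduced and $K_{Y_\mu}+E_\mu+f_\mu^{*}\Gamma_\mu+\Delta'_\mu$ remains $\Q$-Cartier. Theorem \ref{correspondence}, applied on each side, yields
\[
\adj_D(X,D+\Gamma,\ba^t)_\mu=\tau_{D_\mu}(R_\mu,D_\mu+\Gamma_\mu,\ba_\mu^t),
\]
\[
\adj_E(Y,E+f^{*}\Gamma+\Delta',(\ba\sO_Y)^t)_\mu=\tau_{E_\mu}(S_\mu,E_\mu+f_\mu^{*}\Gamma_\mu+\Delta'_\mu,(\ba_\mu S_\mu)^t).
\]
Chaining these with the monotonicity
\[
\tau_{E_\mu}(S_\mu,E_\mu+f_\mu^{*}\Gamma_\mu+\Delta'_\mu,(\ba_\mu S_\mu)^t)\subseteq \tau_{E_\mu}(S_\mu,E_\mu+f_\mu^{*}\Gamma_\mu,(\ba_\mu S_\mu)^t)
\]
and the characteristic-$p$ flat descent of Proposition \ref{proposition flat descent of divisorial test ideals} gives
\[
\adj_E(Y,E+f^{*}\Gamma,(\ba\sO_Y)^t)_\mu \subseteq \adj_D(X,D+\Gamma,\ba^t)_\mu S_\mu
\]
for general $\mu$, which lifts to the desired containment over $\C$.

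The main obstacle is the simultaneous bookkeeping required to guarantee that, for a cofinal set of $\mu\in\Spec A$, the flatness of $f$, the reducedness of $E=f^{*}D$, the $\Q$-Cartierness of $K_Y+E+f^{*}\Gamma+\Delta'$, and the finite generation of the relevant anti-canonical algebras are all preserved, so that Theorem \ref{correspondence} and Proposition \ref{proposition flat descent of divisorial test ideals} can be invoked together. Each ingredient is individually standard (generic freeness, openness of the flat locus, preservation of $\Q$-Cartier index, and the Noetherian stabilization used in the proof of Proposition \ref{single divisor}), but they must be arranged compatibly after enlarging $A$ finitely many times.
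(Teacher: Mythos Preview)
Your proposal is correct and follows essentially the same approach as the paper: reduce to the local case and rational $t$, choose a $\Q$-Cartier boundary $\Delta'$ on $Y$ via Proposition \ref{single divisor}, spread out over a finitely generated $\Z$-algebra $A$, and for general $\mu$ combine Theorem \ref{correspondence}, Proposition \ref{proposition redutions mod p>0 flat local}, and Proposition \ref{proposition flat descent of divisorial test ideals}. The only difference is cosmetic---you make the monotonicity step $\tau_{E_\mu}(\ldots+\Delta'_\mu)\subseteq\tau_{E_\mu}(\ldots)$ explicit, whereas the paper absorbs it into the final appeal to Proposition \ref{proposition flat descent of divisorial test ideals}.
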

\begin{proof}
Take an effective $\Q$-Weil divisor $\Delta$ on $Y$ such that $\Delta$ and $E$ have no common components, $K_Y+E+f^*\Gamma+\Delta$ is $\Q$-Cartier and 
\[
\adj_E(Y,E+f^* \Gamma,(\ba \sO_Y)^t)=\adj_E(Y,E+f^* \Gamma+\Delta,(\ba \sO_Y)^t). 
\]
After a small perturbation, we may assume that $t$ is a rational number. 
Given that the question is local, we can further assume that $X=\Spec R$ and $Y=\Spec S$, where $R$ and $S$ are normal local rings essentially of finite type over $\C$. 
Given a model over a finitely generated $\Z$-subalgebra  $A$ of $\C$, it follows from Theorem \ref{correspondence} that 
\begin{align*}
\adj_D(X,D+\Gamma, \ba^t)_{\mu}&=\tau_{D_{\mu}}(X_{\mu}, D_{\mu}+\Gamma_{\mu}, \ba_{\mu}^t), \\
\adj_E(Y,E+f^* \Gamma+\Delta, (\ba S)^t)_{\mu}&=\tau_{E_{\mu}}(Y_{\mu}, E_{\mu}+f_{\mu}^* \Gamma_{\mu}+\Delta_{\mu}, (\ba_{\mu} S_{\mu})^t)
\end{align*}
for general closed points $\mu \in \Spec A$. 
Therefore, it is enough to show that 
\[
\tau_{E_{\mu}}(Y_{\mu}, E_{\mu}+f_{\mu}^* \Gamma_{\mu}+\Delta_{\mu}, (\ba_{\mu} S_{\mu})^t) \subseteq \tau_{D_{\mu}}(X_{\mu}, D_{\mu}+\Gamma_{\mu}, \ba_{\mu}^t) S_{\mu}
\]
for general closed points $\mu \in \Spec A$. 
By observing that all assumptions are preserved after reduction to characteristic $p \gg 0$, such as $f_{\mu}$ being flat for general closed points $\mu \in \Spec A$ by Proposition \ref{proposition redutions mod p>0 flat local}, this is a direct consequence of Proposition \ref{proposition flat descent of divisorial test ideals}. 
\end{proof}

The assertion of Theorem \ref{faithfully flat thm} does not hold for pure morphisms. 
\begin{eg}
Let $S=\C[x,y,z]_{(x,y,z)}$ be a localization of the three-dimensional polynomial ring over the field $\C$ of complex numbers, equipped with an action of the multiplicative group $G=\C^{\times}$ defined by 
	\[
	t:\left\{
	\begin{array}{l}
		x \mapsto t^2 x \\
		y \mapsto t^{-1}y \\
		z \mapsto t^{-1}z,
	\end{array}
	\right.
	\]
where $t \in G$. 
Then the subring $R:=S^G$ of invariants under the action of $G$ is described as 
\[\C[xy^2,xz^2,xyz]_{(xy^2,xz^2,xyz)} \cong (\C[u,v,w]/(uv-w^2))_{(u,v,w)}.\]
Note that the inclusion $R\hookrightarrow S$ is pure and not flat.
Writing $X:=\Spec R$, $Y:=\Spec S$, and $\m$ for the maximal ideal of $R$, we observe that $\J(X,\m)=\m$ and $\J(Y,\m \sO_Y)=(xy,xz)\sO_Y$, which implies that 
\[
	\J(Y,\m \sO_Y)\cap \sO_X \subseteq \J(X,\m), \quad 
	\J(Y,\m \sO_Y) \nsubseteq \J(X,\m)\sO_Y.
\]
\end{eg}

%%%%%%%%%%%%%%%%%%%%%%%%%%%%%%%%%%%%%%%%%%%%%%%%%%%%%%%%%%%%%%%%%%%%%%%%%%%%%%
\section{Behavior of adjoint ideals under pure morphisms}
%%%%%%%%%%%%%%%%%%%%%%%%%%%%%%%%%%%%%%%%%%%%%%%%%%%%%%%%%%%%%%%%%%%%%%%%%%%%%%
In this section, we study the behavior of adjoint ideals under pure morphisms.
This gives a generalization of \cite[Theorem 1.2]{Yam}. 

In the first half of this section, we work with the following setting. 
\begin{setting}\label{setting behavior of adjoint ideals}
Let $(R,\m)$ be a $d$-dimensional normal local domain essentially of finite type over $\C$, 
$\Delta$ be an effective $\Q$-Weil divisor and $D$ be a prime divisor on $X:=\Spec R$ such that no component of $\Delta$ is equal to $D$. 
Let $\ba$ be an ideal of $R$ not contained in $R(-D)$ and $t >0$ be a real number. Fix a non-principal ultrafilter $\mathcal{F}$ on $\mathcal{P}$ and a field isomorphism $\ulim_p \overline{\F_p}\cong \C$.
\end{setting}

First we generalize the definition of $\tau_{\B, D}(R, D+\Delta)$ to the case of triples. 
\begin{defn}
Let the notation be as in Setting \ref{setting behavior of adjoint ideals}. 
\begin{enumerate}
\item 
Given an $R$-module $M$, the submodule $0_{M}^{\B_D(D+\Delta,\ba^t)}$ of $M$ is defined as 
\[
0_{M}^{\B_D(D+\Delta,\ba^t)}=\bigcap_{n \ge 1}\bigcap_{f\in\ba^{\lceil tn \rceil} \cap R^{\circ,D}}0_{M}^{\B_D(D+\Delta+\frac{1}{n}\Div f)}, 
\]
where the first intersection is taken over all positive integers $n$ and the second intersection is taken over all nonzero elements $f \in \ba^{\lceil tn \rceil}\cap R^{\circ,D}$. 
\item 
The following ideals are equal to each other (cf.~\cite[Proposition 8.23]{HH90}), and are collectively denoted by $\tau_{\B,D}(R,D+\Delta,\ba^t)$. 
\begin{enumerate}
\item  $\bigcap_M \Ann_R 0_{M}^{\B_D(D+\Delta,\ba^t)}$, where $M$ runs through all $R$-modules.
\item $\Ann_R 0_{E}^{\B_D(D+\Delta,\ba^t)}$, where $E=E_R(R/\m)$ is an injective hull of the residue field $R/\m$. 
\end{enumerate}
\end{enumerate}
\end{defn}

\begin{lem}\label{sum of adjoint ideals}
With notation as in Setting \ref{setting behavior of adjoint ideals}, if $K_X+D+\Delta$ is $\Q$-Cartier, then 
\[
\adj_D(X,D+\Delta,\ba^t)=\sum_{n \ge 1}\sum_{f\in \ba^{\lceil tn \rceil}\cap R^{\circ,D}} \adj_D(X,D+\Delta+\frac{1}{n}\Div f), 
\]
where the first summation is taken over all positive integers $n$ and the second summation is taken over all nonzero elements $f \in \ba^{\lceil tn \rceil}\cap R^{\circ,D}$. 
\end{lem}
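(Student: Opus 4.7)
The inclusion $\supseteq$ is immediate from the definition. For each $n\ge 1$ and nonzero $f\in\ba^{\lceil tn\rceil}\cap R^{\circ,D}$, I would take a common log resolution $\pi\colon\widetilde X\to X$ of $(X,D+\Delta+\frac{1}{n}\Div f)$ and $(X,\ba)$ with $\ba\sO_{\widetilde X}=\sO_{\widetilde X}(-F)$. Because $f\in\ba^{\lceil tn\rceil}$, we have $\frac{1}{n}\pi^*\Div f\ge\frac{\lceil tn\rceil}{n}F\ge tF$, so
\[
\lfloor \pi^*(K_X+D+\Delta+\tfrac{1}{n}\Div f) \rfloor \;\ge\; \lfloor \pi^*(K_X+D+\Delta)+tF \rfloor,
\]
and pushing forward yields $\adj_D(X,D+\Delta+\tfrac{1}{n}\Div f)\subseteq \adj_D(X,D+\Delta,\ba^t)$.

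For the reverse inclusion, the plan is to exhibit, for a single well-chosen pair $(n,f)$, an actual equality $\adj_D(X,D+\Delta+\tfrac{1}{n}\Div f)=\adj_D(X,D+\Delta,\ba^t)$. Fix a log resolution $\pi\colon\widetilde X\to X$ of $(X,D+\Delta,\ba)$ with $\ba\sO_{\widetilde X}=\sO_{\widetilde X}(-F)$ and $\pi^{-1}_*D$ smooth. Let $\{E_1,\dots,E_r\}$ denote the finite set of prime divisors on $\widetilde X$ contained in $\Supp(F)\cup\Supp(\pi^*(K_X+D+\Delta))$, and write $a_i=\ord_{E_i}F$ and $\alpha_i=\ord_{E_i}\pi^*(K_X+D+\Delta)\in\Q$. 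I would then choose $n\ge 2$ large and sufficiently divisible so that $\lfloor\alpha_i+\frac{\lceil tn\rceil}{n}a_i\rfloor=\lfloor\alpha_i+ta_i\rfloor$ for every $i$: when $t$ is rational, this holds as soon as $n$ is divisible by the denominator of $t$; when $t$ is irrational, each $\alpha_i+ta_i$ with $a_i>0$ is irrational, so any $n\gg 0$ works.

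Because $R$ is Noetherian, the $\C$-span $V$ of a finite generating set of $\ba^{\lceil tn\rceil}$ is a finite-dimensional subspace of $\ba^{\lceil tn\rceil}$ that generates $\ba^{\lceil tn\rceil}\sO_{\widetilde X}=\sO_{\widetilde X}(-\lceil tn\rceil F)$. Writing locally $\pi^*f=s^{\lceil tn\rceil}u_f$ with $s$ a section cutting out $F$, the induced linear system $\{u_f:f\in V\}$ is base-point free on $\widetilde X$. Applying Bertini together with a prime-avoidance argument over the infinite field $\C$, I expect to find $f\in V$ such that (i) $\ord_{E_i}u_f=0$ for every $i$, (ii) $\Div u_f$ is reduced, shares no component with $F+\pi^{-1}_*D+\pi^{-1}_*\Delta+\Exc(\pi)$, and has snc support together with it, and (iii) $f\notin I_D$, so that $f\in R^{\circ,D}$. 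With such $f$, $\pi$ is already a log resolution of $(X,D+\Delta+\tfrac{1}{n}\Div f,\ba)$, and a prime-by-prime comparison shows that $\lfloor\pi^*(K_X+D+\Delta+\tfrac{1}{n}\Div f)\rfloor$ agrees with $\lfloor\pi^*(K_X+D+\Delta)+tF\rfloor$: on each $E_i$, the two floors match by the choice of $n$ and (i); on a component of $\Div u_f$, reducedness and $n\ge 2$ give $\lfloor\tfrac{1}{n}\rfloor=0$; and on every other prime divisor both sides vanish. Pushing forward then gives the desired equality of adjoint ideals.

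The hard part will be the combined Bertini and prime-avoidance step producing a single $f\in V$ satisfying all of (i)--(iii) at once while also making $\Div u_f$ transverse to finitely many prescribed divisors; the rest of the argument---the choice of $n$ and the prime-by-prime verification of the floor equality---is essentially bookkeeping.
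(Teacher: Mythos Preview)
Your argument is correct, and it takes a genuinely different route from the paper's proof.

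The paper establishes $\subseteq$ by invoking the reduction-mod-$p$ correspondence between adjoint ideals and test ideals (\cite{Tak08}) and then the summation formula
\[
\adj_D(X,D+\Delta,\ba^t)=\sum_{\lambda_1+\dots+\lambda_l=t}\adj_D\bigl(X,D+\Delta+\textstyle\sum_i\lambda_i\Div f_i\bigr),
\]
proved via characteristic-$p$ techniques as in \cite[Theorem~3.2]{Tak06}; each summand is then rewritten as $\adj_D(X,D+\Delta+\tfrac{1}{m}\Div f)$ for a suitable monomial $f=f_1^{m\lambda_1}\cdots f_l^{m\lambda_l}$. Your approach instead stays entirely in characteristic zero: after fixing a log resolution with $\ba\sO_{\widetilde X}=\sO_{\widetilde X}(-F)$, you choose $n$ to control the floors and then pick a single Bertini-general $f\in\ba^{\lceil tn\rceil}$ whose pullback is $\lceil tn\rceil F$ plus a reduced divisor transverse to all relevant strata. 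This yields the stronger conclusion that one pair $(n,f)$ already realizes equality, not merely that the sum does. The Bertini step is standard once one spreads $R$ out to a finite-type $\C$-algebra (so that $\widetilde X$ becomes a smooth quasi-projective variety and the base-point-free linear system generated by $V$ gives a genuine morphism to projective space); the prime-avoidance condition $f\notin I_D$ is a proper linear condition on $V$ since $\ba^{\lceil tn\rceil}\not\subseteq I_D$. What the paper's route buys is brevity in context---the characteristic-$p$ summation formula is already available---whereas your route is self-contained and avoids reduction modulo $p$ altogether.
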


\begin{proof}
It is clear that the right hand side is contained in the left hand side. We will show the reverse containment. 
First note that the filtration of adjoint ideals $\adj_D(X,D+\Delta,\ba^t)$ is right continuous in $t$, that is, $\adj_D(X,D+\Delta,\ba^t)=\adj_D(X, D+\Delta,\ba^{t+\varepsilon})$ for all $0 \le \varepsilon \ll 1$.  Therefore, we may assume that $t$ is a rational number. 
Let $f_1,\dots,f_l$ be a system of generators for $\ba$ such that $f_i\notin R(-D)$ for each $i=1,\dots,l$. 
Since the adjoint ideal $\adj_D(X,D+\Delta,\ba^t)$ coincides after reduction to characteristic $p \gg 0$ with the test ideal $\tau_D(R, D+\Delta, \ba^t)$ along $D$ by \cite{Tak08}, it follows from an argument similar to the proof of \cite[Theorem 3.2]{Tak06} that 
\[
\adj_D(X,D+\Delta,\ba^t)=\sum_{\lambda_1+\dots+\lambda_l=t}\adj_D(X,D+\Delta+\lambda_1\Div f_1+\dots+\lambda_l \Div f_l),
\]
where the summation is taken over all nonnegative rational numbers $\lambda_1, \dots, \lambda_l$ with $\lambda_1+\dots+\lambda_l=t$. 
Fix such nonnegative rational numbers $\lambda_1,\dots, \lambda_l$ and choose an integer $m \ge 1$ so that $m\lambda_i$ is an integer for each $i=1, \dots, l$. 
Then $f:=f_1^{m\lambda_1}\dots f_l^{m\lambda_l}$ is an element of $\ba^{mt}\cap R^{\circ,D}$ and $\frac{1}{m}\Div f=\lambda_1 \Div f_1+\dots +\lambda_l \Div f_l$. Thus, 
\[
\adj_D(X,D+\Delta+\lambda_1\Div f_1 +\dots+\lambda_l \Div f_l )\subseteq \sum_{n\ge 1}\sum_{f\in \ba^{\lceil tn \rceil}\cap R^{\circ,D}} \adj_D(X,D+\Delta+\frac{1}{n}\Div f),
\]
which completes the proof.
\end{proof}

We can now generalize Theorem \ref{BCM ajoint ideal equals adjoint ideal} to the case of triples. 

\begin{prop}\label{BCM adjoint ideals for triples equal adjoint ideals}
With notation as in Setting \ref{setting behavior of adjoint ideals}, if $K_X+D+\Delta$ is $\Q$-Cartier, then 
\[
\tau_{\B,D}(R,D+\Delta,\ba^t)=\adj_D(X,D+\Delta,\ba^t).
\]
\end{prop}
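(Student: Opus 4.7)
I would bootstrap from the pair case (Theorem \ref{BCM ajoint ideal equals adjoint ideal}) using the sum decomposition of Lemma \ref{sum of adjoint ideals}. By Lemma \ref{sum of adjoint ideals},
\[
\adj_D(X, D+\Delta, \ba^t) \;=\; \sum_{n \ge 1} \sum_{f \in \ba^{\lceil tn \rceil} \cap R^{\circ, D}} \adj_D\!\bigl(X, D+\Delta + \tfrac{1}{n}\Div f\bigr).
\]
For each $(n, f)$ appearing in this sum, the boundary $\Delta + \tfrac{1}{n}\Div f$ has no component equal to $D$ (since $f \in R^{\circ, D}$ forces $\ord_D f = 0$), and $K_X + D + \Delta + \tfrac{1}{n}\Div f$ is still $\Q$-Cartier because $\tfrac{1}{n}\Div f$ is $\Q$-Cartier. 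Hence Theorem \ref{BCM ajoint ideal equals adjoint ideal} applies to each summand and gives
\[
\adj_D\!\bigl(X, D+\Delta + \tfrac{1}{n}\Div f\bigr) \;=\; \Ann_R 0^{\B_D(D+\Delta + \frac{1}{n}\Div f)}_E.
\]

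On the other side, unwinding the definition of $\tau_{\B, D}$ for triples yields
\[
\tau_{\B,D}(R, D+\Delta, \ba^t) \;=\; \Ann_R 0^{\B_D(D+\Delta, \ba^t)}_E \;=\; \Ann_R \Bigl(\bigcap_{n, f} 0^{\B_D(D+\Delta + \frac{1}{n}\Div f)}_E\Bigr).
\]
So the proposition reduces to the Matlis-duality identity
\[
\Ann_R \Bigl(\bigcap_{n, f} N_{n, f}\Bigr) \;=\; \sum_{n, f} \Ann_R N_{n, f}, \qquad N_{n, f} := 0^{\B_D(D+\Delta + \frac{1}{n}\Div f)}_E.
\]
The containment $\supseteq$ is immediate, as annihilators are inclusion-reversing. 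For the reverse containment, I would exploit that each $N_{n, f}$ is an $R$-submodule of the Artinian module $E = E_R(R/\m) = E_{\widehat R}(\widehat R/\m\widehat R)$, hence is automatically an $\widehat R$-submodule. Classical Matlis duality over the complete ring $\widehat R$ gives an order-reversing bijection between submodules of $E$ and ideals of $\widehat R$ under which intersections of submodules correspond to sums of ideals, producing $\Ann_{\widehat R}\bigl(\bigcap_{n,f} N_{n, f}\bigr) = \sum_{n, f} \Ann_{\widehat R} N_{n, f}$.

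The main technical obstacle, I expect, will be descending this Matlis-duality identity from $\widehat R$ back to $R$. Concretely, I would verify that $\Ann_{\widehat R} N_{n, f} = (\Ann_R N_{n, f})\widehat R$ for each pair $(n, f)$; this reduces, via Theorem \ref{BCM ajoint ideal equals adjoint ideal}, to the standard commutation of the $\Q$-Cartier adjoint ideal $\adj_D(X, D+\Delta + \tfrac{1}{n}\Div f)$ with $\m$-adic completion. Granted this compatibility for each summand, intersecting the $\widehat R$-identity with $R$ and invoking the faithful flatness of $R \hookrightarrow \widehat R$ (so that $I\widehat R \cap R = I$ for ideals $I \subseteq R$, and sums of extended ideals contract to sums of contractions) yields the displayed identity and completes the argument.
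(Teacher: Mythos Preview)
Your strategy---bootstrap from the pair case via Lemma \ref{sum of adjoint ideals} and Matlis duality---is exactly the paper's. The difference is that you invoke only the ideal-level consequence of Theorem \ref{BCM ajoint ideal equals adjoint ideal}, namely $\adj_D(X,D+\Delta+\tfrac{1}{n}\Div f)=\Ann_R N_{n,f}$, and this forces your detour through $\widehat R$. The paper instead uses the first displayed equation of Theorem \ref{BCM ajoint ideal equals adjoint ideal}, which says $N_{n,f}=\Ann_{\H^d_\m(\omega_R)}\adj_D(X,D+\Delta+\tfrac{1}{n}\Div f)$ as submodules of $\H^d_\m(\omega_R)\cong E$. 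With that in hand the duality step is elementary: $\bigcap_{n,f}\Ann_E I_{n,f}=\Ann_E\bigl(\sum_{n,f} I_{n,f}\bigr)$ is a tautology, and $\Ann_R\Ann_E J=J$ holds for every ideal $J$ in any Noetherian local ring (since $E_{R/J}$ is a faithful $R/J$-module), so no passage to the completion is needed.

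Your completion argument can be made to work, but the justification you give for $\Ann_{\widehat R}N_{n,f}=(\Ann_R N_{n,f})\widehat R$ is not quite right: this does not reduce to commutation of adjoint ideals with completion, because knowing only $\Ann_R N_{n,f}=I_{n,f}$ does not pin down $N_{n,f}$ among submodules of $E$ with that annihilator. What you actually need is that $N_{n,f}=(R/I_{n,f})^\vee$, i.e., the Matlis dual of a finitely generated $R$-module---and that is precisely the submodule-level statement of Theorem \ref{BCM ajoint ideal equals adjoint ideal}. Once you use it, the paper's direct route is shorter.
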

\begin{proof}
It follows from Theorem \ref{BCM ajoint ideal equals adjoint ideal}, Lemma \ref{sum of adjoint ideals} and Matlis duality that 
\begin{align*}
\tau_{\B,D}(R,D+\Delta,\ba^t) &=\Ann_R 0_{H_\m^d(\omega_R)}^{\B_D(D+\Delta,\ba^t)} \\
&= \Ann_R \left(\bigcap_{n \ge 1}\bigcap_{f \in \ba^{\lceil tn \rceil} \cap R^{\circ, D}}\Ann_{H_\m^d(\omega_R)} \adj_D(X,D+\Delta+\frac{1}{n}\Div f) \right)\\
&= \Ann_R \Ann_{H_\m^d(\omega_R)}\left(\sum_{n \ge 1}\sum_{f\in\ba^{\lceil tn \rceil} \cap R^{\circ, D}} \adj_D(X,D+\Delta+\frac{1}{n}\Div f)\right) \\
&= \Ann_R \Ann_{H_\m^d(\omega_R)}\adj_D(X,D+\Delta,\ba^t) \\
&= \adj_D(X,D+\Delta,\ba^t).
\end{align*}
\end{proof}
\begin{thm}\label{adjoint ideals under pure ring extensions}
With notation as in Setting \ref{setting behavior of adjoint ideals}, let $R\hookrightarrow S$ be a pure local $\C$-algebra homomorphism between normal local domains essentially of finite type over $\C$, and $\varphi: Y:=\Spec S \to \Spec R=X$ denote the corresponding morphism. 
Suppose that $K_X+D+\Delta$ is $\Q$-Cartier and the cycle-theoretic pullback $E:=\varphi^{\natural} D$ of $D$ under $\varphi$ is a prime divisor. 
Then 
\[
\adj_E(Y,E+\phi^*\Delta, \ba S^t)\cap R \subseteq \adj_D(X,D+\Delta,\ba^t).
\]
\end{thm}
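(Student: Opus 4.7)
The plan is to translate the inclusion into one of BCM test ideals on both sides of the pure extension and then use the comparison tools developed in earlier sections. To begin, I would apply Proposition \ref{single divisor} (combined with the remark extending it to $\R$-Weil divisor coefficients) to select an effective $\Q$-Weil divisor $\Xi$ on $Y$ with no component equal to $E$ such that $K_Y + E + \phi^*\Delta + \Xi$ is $\Q$-Cartier and
\[ \adj_E(Y, E+\phi^*\Delta+\Xi, (\ba S)^t) = \adj_E(Y, E+\phi^*\Delta, (\ba S)^t). \]
Applying Proposition \ref{BCM adjoint ideals for triples equal adjoint ideals} on both $X$ and $Y$ then reduces the theorem to the containment
\[ \tau_{\B, E}(S, E+\phi^*\Delta+\Xi, (\ba S)^t) \cap R \subseteq \tau_{\B, D}(R, D+\Delta, \ba^t). \]

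To establish this, take $a$ in the left-hand side and $\eta \in 0^{\B_D(D+\Delta, \ba^t)}_{\H^d_{\m}(\omega_R)}$; the goal is to show $a\eta = 0$ in $\H^d_{\m}(\omega_R)$. By definition, $\eta \otimes 1$ vanishes in $\H^d_{\m}(\omega_R \otimes_R \B(I_D, D+\Delta+\tfrac{1}{n}\Div f))$ for every $n \ge 1$ and every $f \in \ba^{\lceil tn \rceil} \cap R^{\circ, D}$. Proposition \ref{cycltheoretic pullback under pure ring extension} guarantees $f \in (\ba S)^{\lceil tn \rceil} \cap S^{\circ, E}$, while Remark \ref{pullback rem}(3) combined with $\Xi \ge 0$ yields $\phi^*\Delta + \Xi + \tfrac{1}{n}\Div_S f \ge \phi^*\bigl(\Delta + \tfrac{1}{n}\Div_R f\bigr)$. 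I would then work with approximations $(R_p), (S_p), (\phi_p), (D_p), (E_p), (\Delta_p), (\Xi_p), (a_p), (\eta_p)$ and compatible choices of $I_{D_p}^+$ and $I_{E_p}^+$, apply Proposition \ref{pure ring extension reduction modulo p} at each $p$ to obtain natural inclusions
\[ I_{D_p}^+\bigl(D_p + \Delta_p + \tfrac{1}{n}\Div f_p\bigr) \hookrightarrow I_{E_p}^+\bigl(E_p + \phi_p^*\Delta_p + \Xi_p + \tfrac{1}{n}\Div_{S_p} f_p\bigr) \]
for almost all $p$, and pass to the ultraproduct to transport the vanishing of $\eta \otimes 1$ from the $R$-side BCM module to the $S$-side BCM module. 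Once this is done, the $S$-side BCM test ideal condition on $a$ produces vanishing on the $S$-side, and an injectivity argument in the spirit of Lemma \ref{lemma local cohomology injectivity}, combined with the purity of $R \hookrightarrow S$, yields $a\eta = 0$ in $\H^d_{\m}(\omega_R)$ by following the structure of the proof of Theorem \ref{BCM ajoint ideal equals adjoint ideal}.

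The principal obstacle is the mismatch between the approximation $(\phi^*\Delta)_p$ of the $\R$-Weil divisor $\phi^*\Delta$ and the pullback $\phi_p^*\Delta_p$ of the approximation: Proposition \ref{reduction modulo p pullback} only guarantees $(\phi^*\Delta)_p + \epsilon(\phi^\natural m\Delta)_p \ge \phi_p^*\Delta_p$ for almost all $p$, for any prescribed $\epsilon > 0$. To apply Proposition \ref{pure ring extension reduction modulo p} cleanly at the approximation level, I would absorb this perturbation into $\Xi$ by replacing it with $\Xi + \epsilon(\phi^\natural m\Delta)$; the right-continuity of adjoint ideals in the coefficients of $\Delta$, guaranteed by Proposition \ref{single divisor} and Lemma \ref{sum of adjoint ideals}, ensures that this modification does not affect the relevant adjoint ideal, and passing to the limit $\epsilon \to 0$ closes the argument. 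Carrying the parameter $\ba^t$ rather than a single $\Q$-Cartier divisor through the argument hinges on the decomposition formulas in Lemma \ref{sum of adjoint ideals} and in the definition of $0^{\B_D(D+\Delta, \ba^t)}_M$.
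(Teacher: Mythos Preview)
Your overall strategy is the same as the paper's: choose a boundary on $Y$ making $K_Y+E+\phi^*\Delta+(\text{boundary})$ $\Q$-Cartier without changing the adjoint ideal, pass to approximations, use Proposition~\ref{pure ring extension reduction modulo p} to get an inclusion of the $I^+$-modules at each $p$, take ultraproducts, and conclude via purity and Proposition~\ref{BCM adjoint ideals for triples equal adjoint ideals}. Two points deserve correction.

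First, your handling of the $\epsilon$-perturbation has a gap. You propose absorbing the defect from Proposition~\ref{reduction modulo p pullback} by replacing $\Xi$ with $\Xi+\epsilon\,\phi^{\natural}m\Delta$. But $\phi^{\natural}m\Delta$ is only a Weil divisor, so $K_Y+E+\phi^*\Delta+\Xi+\epsilon\,\phi^{\natural}m\Delta$ need not be $\Q$-Cartier, and the right-continuity you invoke is the standard one for $\Q$-Cartier perturbations; it does not directly apply here. The paper's fix is to take an effective \emph{Cartier} divisor $G$ on $Y$ whose support contains that of $\phi^{\natural}m\Delta$ (and with no component equal to $E$), and work with $\Xi+\epsilon G$ instead. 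Then $K_Y+E+\phi^*\Delta+\Xi+\epsilon G$ remains $\Q$-Cartier for every rational $\epsilon>0$, the inequality $(\phi^*\Delta)_p+\epsilon G_p+\Xi_p\ge\phi_p^*\Delta_p$ still holds for almost all $p$, and the limit $\epsilon\to 0$ is the classical right-continuity of adjoint ideals along a Cartier direction.

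Second, your appeal to Lemma~\ref{lemma local cohomology injectivity} is unnecessary. Once you know $\eta\otimes 1$ lies in $0^{\B_E(E+\phi^*\Delta+\Xi+\epsilon G+\frac{1}{n}\Div_S f)}_{\H^d_\m(\omega_R)\otimes_R S}$ for all admissible $n,f$, the membership $a\in\tau_{\B,E}(S,\ldots)$ gives $a(\eta\otimes 1)=0$ in $\H^d_\m(\omega_R)\otimes_R S$, and purity of $R\hookrightarrow S$ alone (applied to the $R$-module $\H^d_\m(\omega_R)$) yields $a\eta=0$. No ultra-local-cohomology injectivity is needed at this step; the paper uses purity directly in the commutative square for a general $R$-module $M$.
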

\begin{proof}
Choose an integer $m \ge 1$ such that $m \Delta$ is a Weil divisor. 
Note by Proposition \ref{cycltheoretic pullback under pure ring extension} that $\varphi^{\natural} m\Delta$ has no component equal to $E$. 
We take an effective Cartier divisor $G$ on $Y$ whose support contains that of $\phi^{\natural} m \Delta$ and which has no component equal to $E$. 
We also take an effective $\Q$-Weil divisor $\Gamma$ on $Y$ with no components supported on $E$ such that 
%such that no component of $\Gamma$ equal to $E$, 
$K_Y+E+\varphi^*\Delta+\Gamma$ is $\Q$-Cartier and 
\[\adj_E(Y,E+\phi^*\Delta, \ba S^t)=\adj_E(Y,E+\phi^*\Delta+\Gamma, \ba S^t).\]
Let $(R_p)_{p \in \mathcal{P}}$, $(D_p)_{p \in \mathcal{P}}$ and $(E_p)_{p \in \mathcal{P}}$ be approximations of $R$, $D$ and $E$, respectively. 
Fix choices of $I_{D_p}^+$ and $I_{E_p}^+$
so that the diagram
	\[
	\xymatrix{
		0 \ar[r] & I_{D_p}^+ \ar[r] \ar[d]& R_p^+ \ar[r] \ar[d] & (R/I_D)_p^+ \ar[r] \ar[d] & 0\\
		0 \ar[r] & I_{E_p}^+ \ar[r] & S_p^+ \ar[r] & (S/I_E)_p^+ \ar[r] & 0
	}
	\]
commutes for almost all $p$.
Considering approximations of $\Delta, \Gamma$ and $G$, and applying Proposition \ref{reduction modulo p injectivity}, Proposition \ref{pure ring extension reduction modulo p} and Proposition \ref{reduction modulo p pullback}, we find that the following inclusion holds for any rational number $\epsilon>0$: 
\[
I_{D_p}^+(D_p+\Delta_p) \hookrightarrow I_{E_p}^+(E_p+(\phi^*\Delta)_p+\epsilon G_p + \Gamma_p)
\]
for almost all $p$. 
Furthermore, for every nonzero element $f \in R^{\circ,D}$ and every rational number $s>0$, this inclusion induces an inclusion 
\[
I_{D_p}^+(D_p+\Delta_p+s\Div_{R_p} f_p) \hookrightarrow I_{E_p}^+(E_p+(\phi^*\Delta)_p+\epsilon G_p+\Gamma_p+s\Div_{S_p}f_p)
\]
for almost all $p$, and consequently, an inclusion  
\[
\B(I_D,D+\Delta+s\Div_R f) \hookrightarrow \B(I_E,E+\phi^*\Delta+\epsilon G+\Gamma+s\Div_S f).
\]

Given an $R$-module $M$, we now have the following commutative diagram: 
\[
\xymatrix{
M \ar@{^{(}->}[r] \ar[d] & M\otimes_R S \ar[d]\\
M\otimes_R \B(I_D,D+\Delta+s\Div_R f) \ar[r] & M\otimes_R \B(I_E,E+\phi^*\Delta+\epsilon G+\Gamma+s\Div_S f),
}
\]
where the upper horizontal map is injective due to the purity of the inclusion $R\hookrightarrow S$. 
Therefore, $0_{M}^{\B_D(D+\Delta+s\Div_R f)}$ can be viewed as a submodule of $0_{M\otimes_R S}^{\B_E(E+\phi^*\Delta+\epsilon G+\Gamma+s\Div_Sf)}$. 
When combined with Proposition \ref{BCM adjoint ideals for triples equal adjoint ideals}, this yields 
\begin{align*}
\adj_D(X,D+\Delta,\ba^t) &= \bigcap_{M} \Ann_R \left(\bigcap_{n\ge 1}\bigcap_{f\in \ba^{\lceil tn \rceil}\cap R^{\circ,D}} 0_{M}^{\B_D(D+\Delta+\frac{1}{n} \Div_R f)} \right) \\
& \supseteq  \bigcap_{M} \Ann_R \left(\bigcap_{n\ge 1}\bigcap_{f\in \ba^{\lceil tn \rceil}\cap R^{\circ,D}} 0_{M\otimes_R S}^{\B_E(E+\phi^*\Delta+\epsilon G+\Gamma+\frac{1}{n}\Div_S f)}\right) \\
& \supseteq  \bigcap_{N} \Ann_S \left(\bigcap_{n\ge 1}\bigcap_{f\in \ba^{\lceil tn \rceil}\cap R^{\circ,D}} 0_{N}^{\B_E(E+\phi^*\Delta+\epsilon G+\Gamma+\frac{1}{n}\Div_S f)}\right) \cap R,\\
\end{align*}
where $M$ and $N$ run through all $R$-modules and all $S$-modules, respectively. 
On the other hand, by an argument similar to the proof of \cite[Proposition 3.9]{PR} (cf.~\cite[Proposition 8.23]{HH90}), we have 
\begin{align*}
&\bigcap_{N} \Ann_S \left(\bigcap_{n\ge 1}\bigcap_{f\in \ba^{\lceil tn \rceil}\cap R^{\circ,D}} 0_{N}^{\B_E(E+\phi^*\Delta+\epsilon G+\Gamma+\frac{1}{n}\Div_S f)}\right) \\
=&\Ann_S \left(\bigcap_{n\ge 1}\bigcap_{f\in \ba^{\lceil tn \rceil}\cap R^{\circ,D}} 0_{H_\n^e(\omega_S)}^{\B_E(E+\phi^*\Delta+\epsilon G+\Gamma+\frac{1}{n}\Div_S f)}\right),
\end{align*}
where $e=\dim S$, $\n$ is the maximal ideal of $S$ and $N$ runs through all $S$-modules. 
It follows from Theorem \ref{BCM ajoint ideal equals adjoint ideal} that 
\begin{align*}
&\bigcap_{n \ge 1}\bigcap_{f\in \ba^{\lceil tn \rceil}\cap R^{\circ,D}} 0_{H_\n^e(\omega_S)}^{\B_E(E+\phi^*\Gamma+\epsilon G +\Delta+\frac{1}{n}\Div_S f)} \\
=& \bigcap_{n \ge 1}\bigcap_{f\in \ba^{\lceil tn \rceil}\cap R^{\circ,D}} \Ann_{H_\n^e(\omega_S)}\adj_E(Y,E+\phi^*\Delta+\epsilon G+\Gamma+\frac{1}{n}\Div_S f) \\
=& \Ann_{H_\n^e(\omega_S)}\left(\sum_{n \ge 1}\sum_{f\in \ba^{\lceil tn \rceil}\cap R^{\circ, D}}\adj_E(Y,E+\phi^*\Delta+\epsilon G+\Gamma+\frac{1}{n}\Div_S f)\right) \\
=& \Ann_{H_\n^e(\omega_S)} \adj_E(Y,E+\phi^*\Delta+\epsilon G+\Gamma,(\ba S)^t), 
\end{align*}
with the last equality deduced from essentially the same argument as the proof of Lemma \ref{sum of adjoint ideals} by noting that $R^{\circ, D} \subseteq S^{\circ, E}$. 
Summing up the above inclusions and applying Matlis duality (see, for example, \cite[Lemma 3.3]{Hara}), we obtain 
\begin{align*}
\adj_D(X,D+\Delta,\ba^t) &\supseteq \left(\Ann_S \Ann_{H_\n^e(\omega_S)} \adj_E(Y,E+\phi^*\Delta+\epsilon G+\Gamma,(\ba S)^t)\right) \cap R\\
&= \adj_E(Y,E+\phi^*\Delta+\epsilon G+\Gamma,(\ba S)^t) \cap R. 
\end{align*} 
As $\varepsilon$ approaches zero, the limit results in the desired inclusion
\begin{align*}
\adj_D(X,D+\Delta,\ba^t) & \supseteq \adj_E(Y,E+\phi^*\Delta+\Gamma,(\ba S)^t) \cap R\\
& =\adj_E(Y,E+\phi^*\Delta,(\ba S)^t) \cap R. 
\end{align*}
\end{proof}

We now shift our focus to a global setting. First, we recall the definition of purity in the non-affine context.

\begin{defn}[{\cite[Appendix]{Zhu}}]
A morphism $f:Y\to X$ between Noetherian schemes is said to be {\it pure} if for all $x\in X$, there exists $y\in Y$ such that $f(y)=x$ and the local ring homomorphism $\sO_{X,x}\to \sO_{Y,y}$ is pure.
\end{defn}
\begin{rem}
If $X=\Spec A$ and $Y=\Spec B$ are affine schemes, then $f:Y \to X$ is pure if and only if the induced ring homomorphism $A \to B$ is pure by \cite[Lemma 2.2]{HH95}.
\end{rem}

In the global setting, Theorem \ref{adjoint ideals under pure ring extensions} can be reformulated as follows. 
\begin{cor}\label{adjoint ideal under pure morphism in Q-Cartier case}
Let $f:Y \to X$ be a pure morphism between normal complex varieties, $D$ be a reduced divisor 
and $\Gamma$ be an effective $\Q$-Weil divisor on $X$ that has no common components with $D$. 
Suppose that $K_X+D+\Gamma$ is $\Q$-Cartier and the cycle-theoretic pullback $E:=f^{\natural}D$ of $D$ under $f$ is a disjoint union of prime divisors on $Y$. 
For any coherent ideal sheaf $\ba \subseteq \sO_X$ whose zero locus contains no components of $D$ and for any real number $t>0$, one has 
\[
f_{*}\adj_E(Y,E+f^*\Gamma,(\ba \sO_Y)^t) \cap \sO_X \subseteq \adj_D(X,D+\Gamma,\ba^t).
\]
\end{cor}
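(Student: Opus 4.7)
The plan is to verify the claimed inclusion of ideal sheaves stalk by stalk. Fix $x\in X$; by the definition of a pure morphism, there exists $y\in f^{-1}(x)$ such that the induced local homomorphism $\varphi\colon R := \sO_{X,x} \hookrightarrow \sO_{Y,y} =: S$ is pure. A routine stalk-chasing, combined with the compatibility of the adjoint ideal sheaf with localization in the $\Q$-Cartier setting (a direct consequence of localizing log resolutions), reduces the inclusion to the local statement
\[
\adj_{E_S}(\Spec S,\, E_S + \varphi^*\Gamma_R,\, (\ba_R S)^t)\cap R \;\subseteq\; \adj_{D_R}(\Spec R,\, D_R+\Gamma_R,\, \ba_R^t),
\]
where $D_R$, $\Gamma_R$, $\ba_R$ denote the evident restrictions to $\Spec R$ and $E_S := E|_{\Spec S}$.

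The hypothesis that the components $E_1,\dots,E_m$ of $E$ are pairwise disjoint is now decisive: at most one component of $E$ passes through $y$, so $E_S$ is either zero or a single prime divisor. A common preliminary observation is that, for any component $D_i$ of $D$, the prime divisors of $Y$ appearing in the support of $f^{\natural}D_i$ lie among the components of $E$; hence $\varphi^* D_i = 0$ at $y$ unless some component $E_j$ dominating $D_i$ passes through $y$. If an $E_{j_0}$ does pass through $y$, writing $D_{j_0}$ for the component of $D$ it dominates, a direct valuation computation based on Definition~\ref{pullback} shows further that $\varphi^{\natural}(D_{j_0}|_R) = E_{j_0}|_S$ is prime. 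Combining this with the monotonicity and subadditivity of the inf-pullback (both elementary from Definition~\ref{pullback}), one obtains the equalities
\[
\varphi^*\Gamma_R = \varphi^*(D_R+\Gamma_R) \ \text{(if $E_S = 0$)}, \qquad \varphi^*\Gamma_R = \varphi^*\!\bigl((D_R - D_{j_0}|_R) + \Gamma_R\bigr) \ \text{(if $E_S = E_{j_0}|_S$)}
\]
of $\Q$-Weil divisors on $\Spec S$.

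In the first scenario, the left-hand side becomes $\J(\Spec S, \varphi^*(D_R + \Gamma_R), (\ba_R S)^t)$, and applying Yamaguchi's multiplier-ideal theorem \cite[Theorem~1.2]{Yam} to the $\Q$-Cartier pair $(X, D_R + \Gamma_R)$ gives
\[
\J(\Spec S, \varphi^*(D_R + \Gamma_R), (\ba_R S)^t) \cap R \;\subseteq\; \J(\Spec R, D_R + \Gamma_R, \ba_R^t) \;\subseteq\; \adj_{D_R}(\Spec R, D_R + \Gamma_R, \ba_R^t),
\]
the last step being the standard comparison between multiplier and adjoint ideals. In the second scenario, the identification of boundaries rewrites the left-hand side as $\adj_{E_{j_0}|_S}\!\bigl(\Spec S,\, E_{j_0}|_S + \varphi^*((D_R - D_{j_0}|_R) + \Gamma_R),\, (\ba_R S)^t\bigr)\cap R$, and Theorem~\ref{adjoint ideals under pure ring extensions} applied with the prime divisor $D_{j_0}|_R$ and boundary $(D_R - D_{j_0}|_R) + \Gamma_R$ (so that $K_X + D_{j_0}|_R + (D_R - D_{j_0}|_R) + \Gamma_R = K_X + D + \Gamma$ is $\Q$-Cartier by hypothesis) bounds this above by $\adj_{D_{j_0}|_R}(\Spec R, D_R + \Gamma_R, \ba_R^t)$; the latter is contained in $\adj_{D_R}(\Spec R, D_R + \Gamma_R, \ba_R^t)$ because on a log resolution the only difference between the two is the additional effective term $\pi^{-1}_*(D_R - D_{j_0}|_R)\ge 0$.

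The main obstacle is reconciling the single prime component $E_{j_0}$ on the $Y$-side (to which Theorem~\ref{adjoint ideals under pure ring extensions} directly applies) with the potentially many components of the reduced divisor $D_R$ on the $X$-side; the disjointness of the components of $E$ is precisely what forces the ``irrelevant'' $D_i \ne D_{j_0}$ to pull back to zero at $y$, thereby making the boundaries on the two sides match after restriction to $\Spec S$.
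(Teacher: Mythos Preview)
Your proof is correct in overall structure but is more elaborate than the paper's, and one step is underjustified. The paper's argument is simply: since $E$ is a disjoint union of prime divisors and $f$ is pure, the same is automatically true for $D$. Indeed, if two components $D_1,D_2$ of $D$ met at $x$, then at a point $y$ over $x$ with $\sO_{X,x}\to\sO_{Y,y}$ pure, Proposition~\ref{cycltheoretic pullback under pure ring extension} forces each $\varphi^{\natural}D_i$ to be nonzero, hence supported on the unique component $E_{j_0}$ of $E$ through $y$; taking radicals in $S(-\varphi^{\natural}D_i)\cap R=I_{D_i}$ gives $I_{E_{j_0}|_S}\cap R=I_{D_1}=I_{D_2}$, a contradiction. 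Once $D$ is disjoint, localizing makes both $D_R$ and $E_S$ single prime divisors (or both zero), and Theorem~\ref{adjoint ideals under pure ring extensions} (respectively \cite[Theorem~1.2]{Yam}) applies directly.

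Your route instead tries to allow a non-prime $D_R$ by absorbing $D_R-D_{j_0}|_R$ into the boundary. The gap lies in the asserted equality $\varphi^*\Gamma_R=\varphi^*\bigl((D_R-D_{j_0}|_R)+\Gamma_R\bigr)$: you argue that $\varphi^*(D_i|_R)=0$ for $i\neq j_0$ because ``no component $E_j$ dominating $D_i$ passes through $y$'', but a priori the single component $E_{j_0}$ through $y$ may lie in the support of $f^{\natural}D_i$ for \emph{several} $i$ (namely whenever the image of $E_{j_0}$ lands inside $D_{j_0}\cap D_i$), in which case $\varphi^{\natural}(D_i|_R)\neq 0$ and your monotonicity/subadditivity argument does not yield the equality. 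Ruling this out requires precisely the purity input via Proposition~\ref{cycltheoretic pullback under pure ring extension} sketched above; once you use it, you find $D_R=D_{j_0}|_R$ already, and the extra machinery becomes unnecessary.
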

\begin{proof}
Since $E$ is a disjoint union of prime divisors,  the same holds true for $D$. 
Considering that the question is local, we may assume that both $X$ and $Y$ are spectra of local rings and that both $D$ and $E$ are prime divisors. 
The assertion is then simply Theorem \ref{adjoint ideals under pure ring extensions}. 
\end{proof}

Next we consider the case where $K_X+D+\Delta$ is not necessarily $\Q$-Cartier but the anti-log-canonical ring $\bigoplus_{i \ge 0}\sO_X(\lfloor -i(K_X+D+\Delta) \rfloor)$ is finitely generated. 
\begin{lem}\label{lemma finite generation}
Let $Y$ be a normal affine variety such that the $\sO_Y$-algebra $\bigoplus_{i\ge 0} \sO(iB)$ is finitely generated  for every reduced divisor $B$ on $Y$ and let $V\subseteq Y$ be an open subset. 
\begin{enumerate}
\item 
Let $V_1=\Spec H^0(\sO_V)$. 
Then the natural morphism $V\to V_1$ is an open immersion whose complement has codimension greater than or equal to 2. 
\item 
Suppose that $E$ is a prime divisor and $F$ is an effective $\R$-Weil divisor on $Y$ such that no component of $F$ is equal to $E$,  with their strict transforms on $V_1$ denoted by $E_1$ and $F_1$, respectively. 
Let $\mathfrak{b}\subseteq \sO_Y$ be an ideal not contained in $\sO_Y(-E)$ and $t>0$ be a real number. 
For any element 
\[c\in \adj_E(Y,E+F,\mathfrak{b}^t),\]
there exists an effective $\R$-Weil divisor $\Delta_1$ on $V_1$ such that  
$K_{V_1}+E_1+F_1+\Delta_1$ is $\Q$-Cartier, no component of $\Delta_1$ is equal to $E_1$, and 
\[c \in \adj_{E_1}(V_1,E_1+F_1+\Delta_1,(\mathfrak{b}\sO_{V_1})^t).\]
\end{enumerate}
\end{lem}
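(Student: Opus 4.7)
The plan for part (1) is as follows. Let $B$ denote the reduced Weil divisor on $Y$ whose support is the codimension-one part of $Y \setminus V$. Since $Y$ is normal and $V$ differs from $Y \setminus B$ only in codimension $\geq 2$, Hartogs' theorem gives
\[
\H^0(V, \sO_V) \;=\; \bigcup_{i \geq 0} \H^0(Y, \sO_Y(iB)) \;=:\; R',
\]
which by the finite generation hypothesis is a finitely generated algebra over $R = \H^0(Y, \sO_Y)$; hence $V_1 = \Spec R'$ is an affine variety of finite type over $\C$. To see that $V \hookrightarrow V_1$ is an open immersion, I would cover $V$ by principal affines $D_Y(g_j) \subseteq Y$ with $g_j \in R$: since $B \cap D_Y(g_j) = \emptyset$, every element of $R'$ (having poles only along $B$) is regular on $D_Y(g_j)$, so $R[1/g_j] = R'[1/g_j]$ and $D_Y(g_j) = D_{V_1}(g_j)$. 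For the codimension-two claim, I would match height-one primes of $R'$ with height-one primes of $R$ not contained in $B$ (using that $R \hookrightarrow R'$ induces equal DVRs on the common fraction field, and that prime divisors of $Y$ in $B$ are excluded because $R'$ contains functions with negative valuation there); such prime divisors of $Y$ meet $V$, so every prime divisor of $V_1$ meets $V$, and $V_1 \setminus V$ has codimension $\geq 2$.

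For part (2), the plan is to apply the $\R$-Weil divisor version of Proposition~\ref{single divisor} (see the remark following its proof) twice. First, applied to $(Y, E+F)$, it yields an effective $\R$-Weil divisor $\Gamma$ on $Y$ with no component equal to $E$ and $K_Y + E + F + \Gamma$ $\Q$-Cartier, such that $\adj_E(Y, E+F, \mathfrak{b}^t) = \adj_E(Y, E+F+\Gamma, \mathfrak{b}^t)$, placing $c$ inside this $\Q$-Cartier adjoint ideal. Second, applied to $(V_1, E_1 + F_1)$, it produces an effective $\R$-Weil divisor $\Delta_1$ on $V_1$, with no component equal to $E_1$ and $K_{V_1} + E_1 + F_1 + \Delta_1$ $\Q$-Cartier, such that
\[
\adj_{E_1}(V_1, E_1 + F_1 + \Delta_1, (\mathfrak{b}\sO_{V_1})^t) \;=\; \adj_{E_1}(V_1, E_1 + F_1, (\mathfrak{b}\sO_{V_1})^t).
\]
This $\Delta_1$ is the witness required by the statement, so the task reduces to verifying $c \in \adj_{E_1}(V_1, E_1 + F_1, (\mathfrak{b}\sO_{V_1})^t)$.

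Restricting to the open subset $V \subseteq V_1$ and using that adjoint ideals commute with flat restriction to opens (which is built into the log-resolution definition) together with monotonicity under enlargement of the boundary, I obtain
\[
c|_V \;\in\; \adj_E(Y, E+F+\Gamma, \mathfrak{b}^t)\sO_V \;\subseteq\; \adj_{E_V}(V, E_V + F_V, (\mathfrak{b}\sO_V)^t) \;=\; \adj_{E_1}(V_1, E_1 + F_1, (\mathfrak{b}\sO_{V_1})^t) \cdot \sO_V.
\]
Since $c \in R \subseteq R'$ is a global section and $V_1 \setminus V$ has codimension $\geq 2$ by part~(1), the extension of $c$ to a section of the adjoint ideal sheaf on $V_1$ should follow from the $S_2$ property of the adjoint ideal, which is a consequence of its valuative description as an intersection of conditions $\operatorname{ord}_P(\cdot) \geq c_P$ indexed by prime divisors $P$ over $V_1$ distinct from the strict transform of $E_1$.

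The main obstacle in this plan will be carefully verifying the $S_2$ or reflexivity property of the adjoint ideal sheaf on the possibly non-$\Q$-Gorenstein variety $V_1$, especially when a log resolution introduces exceptional divisors with centers in the codimension-$\geq 2$ locus $V_1 \setminus V$; one must check that such exceptional divisors do not spoil the valuative description used to reduce to codimension-one data.
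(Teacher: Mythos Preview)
Your argument for part~(1) is essentially correct, though more hands-on than the paper's. The paper instead takes the $\Q$-Cartierization $\rho:Y'=\Proj\bigoplus_{i\ge 0}\sO_Y(iD)\to Y$ of the divisorial part $D$ of $Y\setminus V$ and observes that $V_1=Y'\setminus\rho^{-1}_*D$; since $\rho^{-1}_*D$ is $\rho$-ample this is affine, and the inclusion $V\cong\rho^{-1}(V)\hookrightarrow V_1$ has complement of codimension $\ge 2$ by construction. The two descriptions agree, but the paper's identification $V_1\subseteq Y'$ is precisely what drives part~(2).

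Part~(2) has a genuine gap. Your detour through the non-$\Q$-Cartier adjoint ideal on $V_1$ and the proposed $S_2$ extension from $V$ to $V_1$ does not work: adjoint ideals with an ideal factor $\mathfrak{b}^t$ are \emph{not} reflexive in general (already $\J(\mathbb{A}^2,\mathfrak{m}^2)=\mathfrak{m}$ fails $S_2$), and by restricting to $V$ you discard exactly the valuative conditions coming from exceptional divisors centered over $V_1\setminus V$---which is the obstacle you yourself flagged but did not resolve. Concretely, if $\mathfrak{b}$ has interesting support mapping into $Y\setminus V$, the condition $c|_V\in\adj_{E_V}(V,\dots)$ simply sees $\mathfrak{b}\sO_V$ as the unit ideal and forgets everything.

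The paper avoids this entirely. It takes $\Delta_1$ to be the strict transform on $V_1$ of the boundary $\Delta$ already chosen on $Y$ (your $\Gamma$); since $\rho:Y'\to Y$ is a small projective birational morphism, $K_{Y'}+E'+F'+\Delta'=\rho^*(K_Y+E+F+\Delta)$ is $\Q$-Cartier on $Y'$, hence on the open subset $V_1$. Then one takes a single log resolution $\pi:\widetilde{Y}\to Y'$ of $(Y',E'+F'+\Delta',\mathfrak{b}\sO_{Y'})$: composed with $\rho$ it computes $\adj_E(Y,E+F+\Delta,\mathfrak{b}^t)$, and restricted to $\pi^{-1}(V_1)$ it computes $\adj_{E_1}(V_1,E_1+F_1+\Delta_1,(\mathfrak{b}\sO_{V_1})^t)$. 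The divisorial inequality expressing $c\in\adj_E(Y,\dots)$ on $\widetilde{Y}$ therefore restricts verbatim to the inequality expressing $c\in\adj_{E_1}(V_1,\dots)$ on $\pi^{-1}(V_1)$, with no reflexivity argument needed. The second application of Proposition~\ref{single divisor} you propose is unnecessary.
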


\begin{proof}
The assertion follows from an argument similar to that in \cite[Lemma 2.6]{Zhu}, but we include the proof here for the reader's convenience.

Let $D$ be the divisorial part of $Y \setminus V$, considered as a reduced divisor on $Y$, and then take the $\Q$-Cartierization $Y':=\Proj \bigoplus_{i \ge 0}\sO_X(iD) \xrightarrow{\rho} Y$ of $D$. 
Since the strict transform $D':=\rho^{-1}_*D$ of $D$ is $\rho$-ample, the complement $Y' \setminus D'$ is affine. 
By the choice of $D$, we note that $V \cong \rho^{-1}(V)$ and  $\rho^{-1}(V)$ is an open subset of $Y' \setminus D'$ whose complement has codimension greater than or equal to 2. 
Consequently, we obtain the isomorphism
\[
V_1=\Spec H^0(\sO_V) \cong \Spec H^0(\sO_{Y' \setminus D'})=Y' \setminus D'.
\]
Therefore, the complement of the inclusion $V \cong \rho^{-1}(V) \hookrightarrow Y' \setminus D' \cong V_1$ also has codimension greater than or equal to 2. 

Choose an effective $\R$-Weil divisor $\Delta$ on $Y$ such that $\Delta$ has no component equal to $E$, 
$K_Y+E+F+\Delta$ is $\Q$-Cartier and $\adj_E(Y, E+F, \mathfrak{b}^t)=\adj_E(Y, E+F+\Delta, \mathfrak{b}^t)$. 
Let $E', F'$, and $\Delta'$ denote the strict transforms of $E, F$, and $\Delta$ on $Y'$, respectively, and take a log resolution $\pi:\widetilde{Y} \to Y'$ of $(Y', E'+F'+\Delta', \mathfrak{b}\sO_{Y'})$ such that the strict transform $\pi^{-1}_*E'$ of $E'$ is smooth and $\mathfrak{b}\sO_{\widetilde{Y}}=\sO_{\widetilde{Y}}(-\widetilde{B})$ is invertible. 
Then the condition $c \in \adj_E(Y, E+F, \mathfrak{b}^t)$ is equivalent to the inequality 
\[
K_{\widetilde{Y}}-\lfloor \pi^*(K_{Y'}+E'+F'+\Delta')+t \widetilde{B} \rfloor +\pi^{-1}_*E'+\Div_{\widetilde{Y}} c \ge 0. 
\]
Viewing $V_1$ as a open subset of $Y'$, we set $\widetilde{V}:=\pi^{-1}(V_1)$ and $\pi_{\widetilde{V}}:=\pi|_{\widetilde{V}}: \widetilde{V} \to V_1$. 
Restricting the above inequality to $\widetilde{V}$ yields the inequality 
\[
K_{\widetilde{V}}-\lfloor \pi_{\widetilde{V}}^*(K_{V_1}+E_1+F_1+\Delta_1) +t \widetilde{B}|_{\widetilde{V}} \rfloor+{\pi_{\widetilde{V}}}^{-1}_* E_1+\Div_{\widetilde{V}} c \ge 0, 
\]
where $\Delta_1$ is the strict transform of $\Delta$ on $V_1$. 
This implies that 
\[
c \in \adj_{E_1}(V_1,E_1+F_1+\Delta_1,(\mathfrak{b}\sO_{V_1})^t) \subseteq \adj_{E_1}(V_1,E_1+F_1,(\mathfrak{b}\sO_{V_1})^t).
\]
\end{proof}

\begin{thm}\label{adjoint ideals under pure ring extensions in rings with finitely generated anti-canonical algebra}
Let $f:Y\to X$ be a pure morphism between normal complex varieties, $D$ be a reduced divisor 
and $\Gamma$ be an effective $\Q$-Weil divisor on $X$ that has no common components with $D$. 
Suppose that the $\sO_X$-algebra $\bigoplus_{i \ge 0}\sO_X(\lfloor -i(K_X+D+\Gamma) \rfloor)$ is finitely generated, the $\sO_Y$-algebra $\bigoplus_{i\ge 0}\sO_Y(iB)$ is finitely generated for every reduced divisor $B$ on $Y$, and the cycle-theoretic pullback $E:=f^{\natural}D$ of $D$ is a disjoint union of prime divisors on $Y$.  
For any coherent ideal sheaf $\ba\subseteq \sO_X$ whose zero locus contains no components of $D$ and for any real number $t>0$, one has  
\[
f_*\adj_E(Y,E+f^*\Gamma,(\ba \sO_Y)^t) \cap \sO_X \subseteq \adj_D(X,D+\Gamma,\ba^t).
\]
\end{thm}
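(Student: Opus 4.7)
The plan is to reduce to the $\Q$-Cartier case handled in Corollary \ref{adjoint ideal under pure morphism in Q-Cartier case} by passing to the $\Q$-Cartierization of $-(K_X+D+\Gamma)$. Since the claim is local on $X$, I may assume $X=\Spec R$ with $R$ a normal local domain essentially of finite type over $\C$ and view $c$ as an element of $R$. Let $\rho:X'\to X$ be the $\Q$-Cartierization afforded by the finite-generation hypothesis: it is a projective birational morphism (small in codimension one) such that $K_{X'}+D'+\rho^*\Gamma$ is $\Q$-Cartier, where $D':=\rho^{-1}_*D=\rho^*D$. By Lemma \ref{lemma adjoint ideal for rings with f.g. anti-canonical rings},
\[
\adj_D(X,D+\Gamma,\ba^t)=\rho_*\adj_{D'}(X',D'+\rho^*\Gamma,(\ba\sO_{X'})^t),
\]
so it suffices to show that $c$ is a global section of $\adj_{D'}(X',D'+\rho^*\Gamma,(\ba\sO_{X'})^t)$, which I check stalkwise on $X'$.

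\textbf{Base change and the pure morphism into $X'$.} For a closed point $x'\in X'$ with image $x=\rho(x')\in X$, the purity of $f$ yields $y\in Y$ above $x$ with $\sO_{X,x}\to\sO_{Y,y}$ pure. I form the fiber product $Y\times_X X'$ and take a normalization $Z$ of an appropriate irreducible component dominating $X'$; the projection $g:Z\to X'$ is the candidate pure morphism to which Corollary \ref{adjoint ideal under pure morphism in Q-Cartier case} will be applied. That $Y\times_X X'\to X'$ is pure on the level of rings follows because purity of ring homomorphisms is preserved under arbitrary base change: for any $R''$-module $M$ one has the canonical identification $M\otimes_{R''}(S\otimes_R R'')\cong M\otimes_R S$, so injectivity of $M\to M\otimes_R S$ transfers. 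The cycle-theoretic pullback $E_Z:=g^{\natural}D'$ should be a disjoint union of prime divisors, inherited from $E=f^{\natural}D$ via Proposition \ref{cycle-theoretic pullback prime case} together with $\rho^*D=\rho^{-1}_*D$. For the adjoint-ideal membership on $Z$, I combine the hypothesis $c\in\adj_E(Y,E+f^*\Gamma,(\ba\sO_Y)^t)$ with Lemma \ref{lemma finite generation}, applied to affine opens of $Y$; the assumption that $\bigoplus_{i\ge 0}\sO_Y(iB)$ is finitely generated for every reduced divisor $B$ on $Y$ is what makes this lemma available, producing an auxiliary $\R$-Weil divisor $\Delta_1$ on a suitable $V_1$ with $K_{V_1}+E_1+(f^*\Gamma)_1+\Delta_1$ $\Q$-Cartier and $c$ in the corresponding adjoint ideal, which I then transfer along the projection $Z\to Y$.

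\textbf{Main obstacle.} The chief technical difficulty lies in preserving purity through the passage from $Y\times_X X'$ to its normalization $Z$, and from there to the localizations at individual points above each $x'\in X'$: neither normalization nor further localization preserves purity in general. The finite-generation hypothesis on $Y$ for every reduced divisor is critical here precisely because it lets Lemma \ref{lemma finite generation} supply the $\Q$-Cartier certificates needed on the $Y$-side at any required level of locality; combining these with the $\Q$-Cartierization $\rho$ on $X$ produces enough compatible structure that the local pure maps fall into the framework of Theorem \ref{adjoint ideals under pure ring extensions}. Once these compatibilities are established, applying Corollary \ref{adjoint ideal under pure morphism in Q-Cartier case} at each closed point of $X'$ and pushing forward via $\rho$ through Lemma \ref{lemma adjoint ideal for rings with f.g. anti-canonical rings} completes the proof.
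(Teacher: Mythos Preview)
Your overall strategy---reduce to the $\Q$-Cartier case via the $\Q$-Cartierization $\rho:X'\to X$ and then invoke Corollary \ref{adjoint ideal under pure morphism in Q-Cartier case}---is the same as the paper's. The gap is in how you build the pure morphism on the $X'$-side. You propose taking the fiber product $Y\times_X X'$, normalizing a component to get $Z$, and working with $g:Z\to X'$. You yourself identify the obstruction: normalization does not preserve purity, and you offer no mechanism to repair this. The vague sentence ``combining these with the $\Q$-Cartierization $\rho$ on $X$ produces enough compatible structure'' is not an argument; nothing in the hypotheses forces the localization of the normalized fiber product at a chosen point to be a pure extension of $\sO_{X',x'}$. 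Moreover, your intended use of Lemma \ref{lemma finite generation} (``applied to affine opens of $Y$ \ldots\ then transfer along the projection $Z\to Y$'') goes in the wrong direction: you need a map from something built out of $Y$ \emph{to} $X'$, not from $Z$ back to $Y$.

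The paper avoids the fiber product entirely. After passing to the $\Q$-Cartierization $\pi:X'\to X$, it removes an ample divisor $G'\sim -m(K_{X'}+D'+\Gamma')$ to obtain an affine open $U_1\subseteq X'$; on $X$ it sets $U=X\setminus(X_{\mathrm{sing}}\cup\pi_*G')$ and on $Y$ sets $V=f^{-1}(U)$. The key point is that $H^0(\sO_{U_1})=H^0(\sO_U)$ (since $\pi$ is an isomorphism over the smooth locus) and, by \cite[Lemma 2.2]{Zhu}, the ring map $H^0(\sO_U)\hookrightarrow H^0(\sO_V)$ is pure. Now Lemma \ref{lemma finite generation} is applied to $V\subseteq Y$: this is where the finite-generation hypothesis on $\bigoplus_{i\ge0}\sO_Y(iB)$ is used, to produce an affine variety $V_1=\Spec H^0(\sO_V)$ containing $V$ as an open subset with complement of codimension $\ge 2$, together with the $\Q$-Cartier boundary $\Delta_1$ and the membership of $c$ in the adjoint ideal on $V_1$. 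The resulting morphism $g:V_1\to U_1$ is a pure morphism of affine varieties with $K_{U_1}+(D'+\Gamma')|_{U_1}$ $\Q$-Cartier, and Corollary \ref{adjoint ideal under pure morphism in Q-Cartier case} applies directly. Purity is never jeopardized because one works only with open subsets and rings of global sections, never with normalizations of fiber products.
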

\begin{proof}
Since the question is local, we may assume that $X$ and $Y$ are both affine and $D$ and $E$ are both prime divisors.  
We use a similar strategy to the proof of \cite[Lemma 2.8]{Zhu}. 
Take any nonzero element
\[
c\in \adj_E(Y,E+f^*\Gamma,(\ba \sO_Y)^t)\cap \sO_X.
\] 
Let $\pi: X':=\Proj \bigoplus_{i \ge 0} \sO_X(\lfloor -i(K_X+D+\Gamma)\rfloor) \to X$ be the $\Q$-Cartierization of $-(K_X+D+\Gamma)$, that is, a projective birational morphism such that the strict transform of $-(K_X+D+\Gamma)$ is $\Q$-Cartier and ample.  
By Lemma \ref{lemma adjoint ideal for rings with f.g. anti-canonical rings}, 
we have 
\[
\adj_D(X,D+\Gamma,\ba^t)=\pi_*\adj_{D'}(X',D'+\Gamma',(\ba \sO_{X'})^t), 
\]
where $D'$ and $\Gamma'$ are the strict transforms on $X'$ of $D$ and $\Gamma$, respectively. 
Since $-(K_X'+D'+\Gamma')$ is ample, take an effective Cartier divisor $G'$ on $X'$ that is linearly equivalent to $-m(K_{X'}+D'+\Gamma')$ for sufficiently divisible integer $m \gg 0$. 
Consequently, $U_1=X' \setminus G'$ is an affine open subset of $X'$. 
As $G'$ varies, the corresponding $U_1$ cover $X'$. 
Therefore, it suffices to show that $c\in \adj_{D'|_{U_1}}(U_1,(D'+\Gamma')|_{U_1},(\ba\sO_{U_1})^t)$. 

Set $G:=\pi_*G'$, $U:=X\setminus (X_{\mathrm{sing}}\cup G)$ and  $V:=f^{-1}(U)\subseteq Y$, 
where $X_{\mathrm{sing}}$ is the singular locus of $X$. 
By \cite[Lemma 2.2]{Zhu}, $H^0(\sO_{U_1})=H^0(\sO_U)$ is a pure subring of $H^0(\sO_V)$. 
By setting $V_1:=\Spec H^0(\sO_V)$, this inclusion induces the morphism of affine varieties $g:V_1\to U_1$. 
Note that $g^*(D'|_{U_1})$ is a prime divisor on $V_1$ and $g^*(\Gamma'|_{U_1})$ has no component equal to $g^*(D'|_{U_1})$. 
Applying Lemma \ref{lemma finite generation}, one can find an effective $\R$-Weil divisor $\Delta_1$ on $V_1$ such that $\Delta_1$ and $g^*(D'|_{U_1})$ have no common components,  $K_{V_1}+g^*((D'+\Gamma')|_{U_1})+\Delta_1$ is $\Q$-Cartier and 
\[c\in \adj_{g^*(D'|_{U_1})}(V_1,g^*((D'+\Gamma')|_{U_1})+\Delta_1,(\ba \sO_{V_1})^t).\]
It follows from the definition of $U_1$ that $K_{U_1}+(D'+\Gamma')|_{U_1}$ is $\Q$-Cartier. 
Thus, by applying Corollary \ref{adjoint ideal under pure morphism in Q-Cartier case},  we have $c\in \adj_{D'|_{U_1}}(U_1,(D'+\Gamma')|_{U_1},(\ba \sO_{U_1})^t)$. 
\end{proof}

The following corollary generalizes Zhuang's result \cite[Theorem 2.10]{Zhu} and provides an affirmative answer to his question \cite[Question 2.13]{Zhu} in the klt case. 

\begin{cor}[{cf. \cite[Theorem 2.10]{Zhu}}]\label{cor plt case}
Let $f:Y\to X$ be a pure morphism between normal complex varieties, 
$D$ be a reduced divisor and $\Gamma$ be an effective $\Q$-Weil divisor on $X$ that has no common components with $D$.  
Suppose that the cycle-theoretic pullback $E=f^\natural D$ of $D$ under $f$ is a reduced divisor on $Y$. 
If $(Y,E+f^* \Gamma )$ is of plt type along $E$, then $(X,D+\Gamma)$ is of plt type along $D$. 
In particular, if $(Y, f^* \Gamma )$ is of klt type, then $(X,\Gamma)$ is of klt type as well.
\end{cor}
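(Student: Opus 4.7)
Plan. The strategy is to deduce the corollary from Theorem~\ref{adjoint ideals under pure ring extensions in rings with finitely generated anti-canonical algebra} (i.e., Theorem~\ref{main thm1}(2)) after arranging its hypotheses to hold. Being of plt type along $D$ is a local property on $X$, so I first reduce to the case $X=\Spec R$ for a normal local ring essentially of finite type over $\C$, localizing at a chosen point $x\in X$.

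The hypothesis that $(Y,E+f^*\Gamma)$ is of plt type along $E$ produces, via Proposition~\ref{single divisor}, an effective $\Q$-Weil divisor $\Delta_Y$ on $Y$ (with no component equal to $E$) such that $K_Y+E+f^*\Gamma+\Delta_Y$ is $\Q$-Cartier and $(Y,E+f^*\Gamma+\Delta_Y)$ is plt along $E$. Since plt-ness of a pair with effective boundary forces the ambient variety to be klt (discrepancies for $(Y,0)$ dominate those for a pair with larger effective boundary), $Y$ is of klt type, and hence condition (2)(b) of Theorem~\ref{adjoint ideals under pure ring extensions in rings with finitely generated anti-canonical algebra} is automatic. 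Applying Proposition~\ref{single divisor} on $X$ likewise yields an effective $\Q$-Weil divisor $\Gamma_X$ with no component equal to $D$ such that $K_X+D+\Gamma+\Gamma_X$ is $\Q$-Cartier and $\adj_D(X,D+\Gamma)=\adj_D(X,D+\Gamma+\Gamma_X)$; the $\Q$-Cartier-ness of $K_X+D+\Gamma+\Gamma_X$ makes the anti-canonical algebra of $(X,D+\Gamma+\Gamma_X)$ trivially finitely generated, so the main hypothesis of the theorem also holds for the modified triple. It now suffices to show that this common adjoint ideal equals $\sO_X$.

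To arrange the remaining hypothesis that $E$ be a disjoint union of prime divisors, I exploit purity of $f$: one can select $y\in Y$ over $x$ with $\sO_{X,x}\hookrightarrow \sO_{Y,y}$ pure and with $y$ outside the codimension-$\ge 2$ locus of pairwise intersections of the components of $E$, then replace $Y$ by a suitable Zariski-open neighborhood $V\subseteq Y$ of $y$ so that $E|_V$ is a disjoint union of primes while $V\to X$ remains pure on a neighborhood of $x$. Theorem~\ref{adjoint ideals under pure ring extensions in rings with finitely generated anti-canonical algebra}(2) applied to $(X,D+\Gamma+\Gamma_X)$ and $V\to X$ then gives
\[
(f|_V)_*\,\adj_{E|_V}\!\bigl(V,\,E|_V+f^*(\Gamma+\Gamma_X)\bigr)\cap \sO_X \;\subseteq\; \adj_D(X,D+\Gamma+\Gamma_X)=\adj_D(X,D+\Gamma),
\]
reducing the problem to showing that $(V,E|_V+f^*(\Gamma+\Gamma_X))$ is of plt type along $E|_V$ near $y$.

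This last step is the main obstacle: while $(Y,E+f^*\Gamma)$ is of plt type along $E$ by hypothesis, the additional pulled-back corrector $f^*\Gamma_X$ on the $Y$-side could a priori disrupt the plt-type structure. The plan to overcome it is to invoke Proposition~\ref{single divisor} once more, now on $Y$ for the pair $(Y,E+f^*(\Gamma+\Gamma_X))$, producing an admissible boundary $\Delta'_Y$ making $K_Y+E+f^*(\Gamma+\Gamma_X)+\Delta'_Y$ $\Q$-Cartier with the adjoint ideal preserved; plt-ness is then verified through a perturbation argument that exploits the openness of the plt condition together with the freedom to enlarge $\Delta_Y$, namely by replacing $\Delta_Y$ by a slightly larger divisor that absorbs the contribution of $f^*\Gamma_X$ locally near $y$. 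Once this plt-type property on $Y$ is in place, the left-hand side of the containment above contains $\sO_X$ near $x$, and the desired conclusion follows.
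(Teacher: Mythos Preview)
Your proposal contains a genuine gap at exactly the point you flagged as the ``main obstacle.'' The perturbation argument you sketch cannot work: the corrector $\Gamma_X$ produced by Proposition~\ref{single divisor} is a fixed effective $\Q$-divisor determined by the failure of $K_X+D+\Gamma$ to be $\Q$-Cartier, not a small perturbation. There is no reason $f^*\Gamma_X$ should be absorbable into an enlarged $\Delta_Y$ while keeping $(Y,E+f^*\Gamma+f^*\Gamma_X+\Delta_Y)$ plt; if $K_X+D+\Gamma$ is far from $\Q$-Cartier, $\Gamma_X$ and hence $f^*\Gamma_X$ can be large. Openness of the plt condition helps only for genuinely small perturbations, which this is not. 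Thus your reduction to the $\Q$-Cartier case on $X$ via Proposition~\ref{single divisor} destroys the one hypothesis on $Y$ you need.

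The paper avoids this entirely by \emph{not} modifying $\Gamma$ on $X$. The key missing ingredient is \cite[Lemma~2.7]{Zhu}: since $Y$ is of klt type (as you correctly observe), purity of $f$ descends finite generation of divisorial algebras from $Y$ to $X$, so $\bigoplus_{i\ge0}\sO_X(\lfloor -i(K_X+D+\Gamma)\rfloor)$ is finitely generated for the \emph{original} $\Gamma$. One can therefore apply Theorem~\ref{adjoint ideals under pure ring extensions in rings with finitely generated anti-canonical algebra} directly to $(X,D+\Gamma)$ and $(Y,E+f^*\Gamma)$, obtaining
\[
\adj_D(X,D+\Gamma)\;\supseteq\; f_*\adj_E(Y,E+f^*\Gamma)\cap\sO_X \;=\; f_*\sO_Y\cap\sO_X \;=\; \sO_X.
\]
Two further simplifications you missed: first, $(Y,E+f^*\Gamma)$ being of plt type along $E$ already forces $E$ to be a disjoint union of prime divisors (the coefficient-one part of a plt boundary is normal), so your shrinking step is unnecessary; second, finite generation of $\bigoplus_{i\ge0}\sO_X(\lfloor -i\Gamma\rfloor)$ (again via \cite[Lemma~2.7]{Zhu}) ensures that $f^*\Gamma$ is actually a $\Q$-Weil divisor, a point that needs to be checked.
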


\begin{proof}
Given that $Y$ is of klt type, by a result of \cite{BCHM}, the $\sO_Y$-algebra $\bigoplus_{i\ge0} \sO_Y(iB)$ is finitely generated for every Weil divisor $B$ on $Y$. 
The $\sO_X$-algebras $\bigoplus_{i\ge0} \sO_X(\lfloor -i \Gamma \rfloor)$ and $\bigoplus_{i\ge0} \sO_X(\lfloor -i(K_X+\Gamma) \rfloor)$ are also finitely generated, as shown in \cite[Lemma 2.7]{Zhu}. 
Note that $f^*\Gamma$ is a $\Q$-Weil divisor due to the finite generation of the former graded ring.  
Since $(Y,E+f^*\Gamma)$ is of plt type along $E$, which forces $E$ to be supported on a disjoint union of prime divisors, 
it follows from Theorem \ref{adjoint ideals under pure ring extensions in rings with finitely generated anti-canonical algebra} that 
\[
 \adj_D(X,D+\Gamma) \supseteq f_*\adj_E(Y,E+f^*\Gamma) \cap \sO_X=f_*\sO_Y \cap \sO_X=\sO_X.  
\]
Thus, the pair $(X, D+\Gamma)$ is of plt type along $D$. 
\end{proof}

We conclude this section by focusing on the lc case. 
The following lemma gives a characterization of lc pairs in terms of multiplier ideals.
\begin{lem}[cf.~\textup{\cite[Lemma 1.3]{Tak04}}]\label{lem log canonical}
Let $X$ be a normal affine variety over an algebraically closed field of characteristic zero and let $\Gamma$ be an effective $\R$-Weil divisor on $X$.
Take a nonzero element $f$ of the multiplier ideal $\J(X,\Gamma)$. 
If $(X,\Gamma)$ is of lc type, then $f\in \J(X,\Gamma+(1-\epsilon)\Div (f))$ for every $0<\epsilon < 1$. 
When $K_X+\Gamma$ is $\Q$-Cartier, the converse also holds. 
\end{lem}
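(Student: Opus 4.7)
The plan is to reduce each direction to a discrepancy inequality on a common log resolution, the key point in the forward direction being that the ``bad'' divisors of the multiplier-ideal-maximizing perturbation $\Gamma$ from Proposition~\ref{single divisor} are automatically contained in the zero locus of $f$.

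For the forward direction, fix $\Delta'$ effective with $K_X+\Delta'$ $\Q$-Cartier and $(X,\Delta')$ lc (from the lc-type hypothesis), and by Proposition~\ref{single divisor} applied to $(X,0)$ fix $\Gamma$ effective with $K_X+\Gamma$ $\Q$-Cartier and $\J(X)=\J(X,\Gamma)$. For $t\in(0,1]$ set $\Delta_t:=(1-t)\Delta'+t\Gamma$, which is effective with $K_X+\Delta_t$ $\Q$-Cartier. It suffices to show that for $t>0$ sufficiently small (depending on $\epsilon$), $f\in\J(X,\Delta_t+(1-\epsilon)\Div f)$, since this is a summand in the definition of $\J(X,(1-\epsilon)\Div f)$. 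On a common log resolution $\pi\colon\tilde X\to X$ of $(X,\Delta'+\Gamma+\Div f)$, pulling the integral divisor $\pi^*\Div f$ through the floor function rewrites the desired containment as the discrepancy inequality
\[
a_E(X,\Delta_t)+\epsilon\,b_E>-1 \qquad\text{for every prime divisor }E\text{ on }\tilde X,
\]
where $b_E:=\operatorname{ord}_E\pi^*\Div f$. I would check case by case: if $a_E(\Delta')>-1$, then $a_E(\Delta_t)>-1$ for all small $t$; if $a_E(\Delta')=-1$ and $a_E(\Gamma)>-1$, then $a_E(\Delta_t)=-1+t(a_E(\Gamma)+1)>-1$ for every $t>0$; and if $a_E(\Delta')=-1$ with $a_E(\Gamma)\le-1$, then the order-of-vanishing bound in $\J(X,\Gamma)$ at $E$ forces $\pi(E)\subseteq V(\J(X,\Gamma))=V(\J(X))\subseteq V(f)$ (using the standard inclusion $\J(X)\subseteq I_{V(\J(X))}$), so $b_E\ge 1$, and the required inequality reduces to the finite family of linear constraints $t\,|a_E(\Gamma)+1|<\epsilon\,b_E$, each with strictly positive slack, which can be simultaneously satisfied by choosing $t$ small.

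For the converse, assume $X$ is $\Q$-Gorenstein; if $X$ is not lc, I will produce a counterexample. Fix a log resolution $\pi$ and a prime divisor $E_0$ on $\tilde X$ with discrepancy $a_{E_0}<-1$; set $c_{E_0}:=\operatorname{ord}_{E_0}\pi^*K_X$ and $d_{E_0}:=\operatorname{ord}_{E_0}K_{\tilde X}$. Since $\J(X)=\pi_*\sO_{\tilde X}(K_{\tilde X}-\lfloor\pi^*K_X\rfloor)$, every nonzero $f\in\J(X)$ satisfies $b_{E_0}\ge\lfloor c_{E_0}\rfloor-d_{E_0}$, which is a strictly positive integer because $a_{E_0}<-1$, and a generic $f$ realizes this minimum. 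For $\epsilon>0$ sufficiently small, $a_{E_0}+\epsilon\,b_{E_0}<-1$, and by the same floor manipulation this forces $f\notin\J(X,(1-\epsilon)\Div f)$, contradicting the hypothesis and forcing $X$ to be lc. The main obstacle is the third subcase in the forward direction, where the inclusion $V(\J(X))\subseteq V(f)$ -- which encodes precisely $f\in\J(X)$ -- is exactly what supplies $b_E\ge 1$ at each divisor where $(X,\Gamma)$ fails lc, thereby bypassing the need for $(X,\Gamma)$ itself to be lc and making the naive convex combination with $\Gamma$ work.
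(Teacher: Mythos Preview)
Your argument is correct, but it takes a different route from the paper's. For the forward direction, the paper invokes the existence of an $m$-compatible boundary from \cite[Theorem 5.4]{dFH} to produce a \emph{single} effective $\Q$-Weil divisor $\Delta$ with $K_X+\Delta$ $\Q$-Cartier, $(X,\Delta)$ lc, \emph{and} $\J(X)=\J(X,\Delta)$; with both properties in one boundary, the only problematic divisors are those with $a_E(\Delta)=-1$, and for those $f\in\J(X,\Delta)$ immediately gives $b_E\ge 1$, so $a_E(\Delta)+\epsilon b_E>-1$ with no further work. You instead take two boundaries—$\Delta'$ from the lc-type hypothesis and $\Gamma$ from Proposition~\ref{single divisor}—and interpolate via $\Delta_t=(1-t)\Delta'+t\Gamma$, handling the three discrepancy cases separately and choosing $t$ small enough (depending on $\epsilon$ and the finitely many divisors on the fixed resolution). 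Your approach is more elementary in that it avoids the $m$-compatible boundary machinery and uses only Proposition~\ref{single divisor}, at the cost of the extra case analysis and the interpolation parameter; the paper's approach is shorter once one accepts the input from \cite{dFH}. For the converse, the paper simply cites \cite[Lemma 1.3]{Tak04}, while you give the direct discrepancy argument; note that your remark ``a generic $f$ realizes this minimum'' is unnecessary, since for \emph{any} fixed $f\in\J(X)$ and any $E_0$ with $a_{E_0}<-1$, the quantity $a_{E_0}+\epsilon b_{E_0}$ tends to $a_{E_0}<-1$ as $\epsilon\to 0^+$, so the membership fails for small $\epsilon$ regardless of whether $b_{E_0}$ is minimal.
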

\begin{proof}
Suppose that $(X,\Gamma)$ is of lc type. 
By definition, there exist effective $\R$-Weil divisors $\Delta_1$ and $\Delta_2$ on $X$ such that $K_X+\Gamma+\Delta_i$ is $\R$-Cartier for each $i=1, 2$, $(X, \Gamma+\Delta_1)$ is lc and $f \in \mathcal{J}(X, \Gamma+\Delta_2)$. 
For each $0 < \epsilon <1$, 
set $\Delta_{\epsilon}=(1-\epsilon)\Delta_1+\epsilon  \Delta_2$.
Note that $K_X+\Gamma+\Delta_{\epsilon}$ is $\R$-Cartier. 
Let $\mu:Y\to X$ be a log resolution of the pair $(X, \Gamma+\Delta_1+\Delta_2+\Div (f))$. 
The containment $f\in \J(X, \Gamma+\Delta_2)$ implies that 
\[
\ord_{E}\left(K_Y-  \mu^*(K_X+\Gamma+\Delta_2)\right)+\ord_E(f)>-1. 
\]
On the other hand, since $(X, \Gamma+\Delta_1)$ is lc, we have $\ord_{E}\left(K_Y-\mu^*(K_X+\Gamma+\Delta_1)\right) \ge -1$ for all $i$. 
Therefore, for all $0<\epsilon<1$, one has 
\[
\ord_E\left(K_Y-\mu^*(K_X+\Gamma+\Delta_{\epsilon})\right)+\epsilon \ord_E(f)
>-(1-\epsilon)-\epsilon=-1,
\]
which is equivalent to saying that 
\[f\in J(X, \Gamma+\Delta_{\epsilon}+(1-\epsilon)\Div (f)) \subseteq \J(X, \Gamma+(1-\epsilon)\Div (f)).\] 

If $K_X+\Gamma$ is $\Q$-Cartier, then the above argument can be reversed by assuming $\Delta_1=\Delta_2=0$, and the latter assertion follows. 
\end{proof}

As an application of Corollary \ref{adjoint ideal under pure morphism in Q-Cartier case}, which has its roots in \cite[Theorem 1.2]{Yam}, 
we can provide a partial affirmative answer to another question posed by Zhuang \cite[Question 2.11]{Zhu} (cf.~\cite[Question 8.5]{BGLM}).  

\begin{thm}\label{lc case}
Let $\phi:Y\to X$ be a pure morphism between normal complex varieties and let $\Gamma$ be an effective $\Q$-Weil divisor on $X$ such that $K_X+\Gamma$ is $\Q$-Cartier. 
Assume that one of the following conditions holds. 
\begin{enumerate}[label=(\roman*)]
\item There exists an effective $\R$-Weil divisor $\Delta$ on $Y$ such that $K_Y+\phi^*\Gamma+\Delta$ is $\Q$-Cartier and no non-klt center of $(Y, \phi^*\Gamma+\Delta)$ dominates $X$. 
\item The non-klt-type locus of $(Y,\phi^*\Gamma)$ has dimension at most one. 
\end{enumerate}
If $(Y,\phi^*\Gamma)$ is of lc type, then $(X,\Gamma)$ is lc.  
\end{thm}

\begin{proof} 
Since the question is local, we may assume that $X$ and $Y$ are both affine. 
First, we consider the case (i).
This condition is equivalent to saying that $\J(Y,\phi^*\Gamma) \cap \sO_X \ne 0$, and therefore, we take a nonzero element $f \in \J(Y,\phi^*\Gamma) \cap \sO_X$. 
Since $(Y,\phi^*\Gamma)$ is of lc type, by Lemma \ref{lem log canonical}, $f$ lies in $\J(Y, \phi^*\Gamma+(1-\epsilon) \Div_Y (f))$ for all $0<\epsilon<1$. 
Applying Corollary \ref{adjoint ideal under pure morphism in Q-Cartier case}, we have 
\[
f \in \J(Y, \phi^*\Gamma+(1-\epsilon) \Div_Y (f)) \cap \sO_X \subseteq \J(X, \Gamma+(1-\epsilon) \Div_X (f))
\]
for all $0<\epsilon<1$. 
Then, by Lemma \ref{lem log canonical} once again, we can conclude that $(X, \Gamma)$ is lc. 

Next, we turn our attention to the case (ii). If $\J(Y,\phi^*\Gamma) \cap \sO_X \ne 0$, then we can reduce this to the case (i). 
Thus, we may assume that $\J(Y,\phi^*\Gamma) \cap \sO_X=0$. 
Since $\J(Y,\phi^*\Gamma)$ defines the non-klt-type locus $Z$ of $(Y,\phi^*\Gamma)$, it follows that $Z$ dominates $X$. 
However, by assumption, $Z$ has dimension at most one, which forces $X$ to also have dimension at most one. 
Therefore, $X$ is smooth and $\Supp \Gamma$ is a disjoint union of smooth prime divisors. 
As $(Y, \varphi^* \Gamma)$ is of lc type, the coefficients of $\Gamma$ are at most one, and in particular, the pair $(X, \Gamma)$ is lc. 
\end{proof}
%%%%%%%%%%%%%%%%%%%%%%%%%%%%%%%%%%%%%%%%%%%%%%%%%%%%%%%%%%%%%%%%%%%%%%%%%%%%%%


\begin{thebibliography}{99}
%%%%%%%%%%%%%%%%%%%%%%%%%%%%%%%%%%%%%%%%%%%%%%%%%%%%%%%%%%%%%%%%%%%%%%%%%%%%%%
\bibitem{BCHM}
C.~Birkar, P,~Cascini, C.~D.~Hacon and J.~McKernan, 
Existence of minimal models for varieties of log general type, 
J. Amer. Math. Soc. \textbf{23} (2010), no. 2, 405--468.

\bibitem{Bo}% 
J.-F.~Boutot, 
Singularit\'{e}s rationnelles et quotients par les groupes r\'{e}ductifs, 
Invent. Math. \textbf{88} (1987), no.~1, 65--68. 

\bibitem{BST}%
M.~Blickle, K.~Schwede and K.~Tucker, 
$F$-singularities via alterations, 
Amer. J. Math. \textbf{137} (2015), no.~1, 61--109. 

\bibitem{BGLM}%
L.~Braun, D.~Greb, K.~Langlois and J.~Moraga, 
Reductive quotients of klt singularities, 
Invent. Math.   \textbf{237} (2024), no.~3, 1643--1682. 

\bibitem{CEMS}
A.~Chiecchio, F.~Enescu, L.~E.~Miller and K.~Schwede, 
Test ideals in rings with finitely generated anti-canonical algebras, 
J. Inst. Math. Jussieu \textbf{17} (2018), no.~1, 171--206.

\bibitem{DT}
R.~Datta and K.~Tucker, 
Openness of splinter loci in prime characteristic, 
J. Algebra \textbf{629} (2023), 307--357..

\bibitem{dFH}%
T.~de Fernex and C.~Hacon, 
Singularities on normal varieties, Compos. Math. \textbf{145} (2009), no. 2, 393--414.

\bibitem{GM}%
C.~Godfrey and T.~Murayama, 
Pure subrings of Du Bois singularities are Du Bois singularities, 
arXiv:2208.14429, preprint (2022). 


\bibitem{Hara}
N.~Hara, 
Geometric interpretation of tight closure and test ideals, 
Trans. Amer. Math. Soc. \textbf{353} (2001), 1885--1906.

\bibitem{Hoc77}
M.~Hochster,
Cyclic purity versus purity in excellent Noetherian rings
Trans. Amer. Math. Soc. \textbf{231} (1977), no.~2, 463--488.

\bibitem{Hoc07}
M.~Hochster,
Foundations of tight closure theory,
lecture notes from a course taught on the University of Michigan Fall 2007 (2007).

\bibitem{HH90}%
M.~Hochster and C.~Huneke, 
Tight closure, invariant theory and the Brian\c{c}on-Skoda theorem, 
J. Amer. Math. Soc. \textbf{3} (1990), 31--116.

\bibitem{HH92}%
M.~Hochster, C.~Huneke, 
Infinite integral extensions and big Cohen-Macaulay algebras, 
Ann. of Math. (2) \textbf{135} (1992), no.~1, 53--89. 

\bibitem{HH95}
M.~Hochster and C.~Huneke, 
Applications of the existence of big Cohen-Macaulay algebras,  
Advances in Math. \textbf{113} (1995), no.~1, 45--117.

\bibitem{HH}
M.~Hochster and C.~Huneke, 
Tight closure in equal characteristic zero, 
preprint. 

\bibitem{HJ}
M.~Hochster and J.~Jefferies, 
Extensions of primes, flatness, and intersection flatness, 
Commutative Algebra: 150 years with Roger and Sylvia Wiegand, Contemporary Mathematics \textbf{773} (2021) 63--81.

\bibitem{HJPS}
M.~Hochster, J.~Jeffries, V.~Pandey, A.~Singh, 
When are the natural embeddings of classical invariant rings pure?, 
Forum Math. Sigma  \textbf{11} (2023), Paper No.~e67, 43 pp.

\bibitem{Hu}%
C.~Huneke, 
Absolute integral closure, 
Commutative algebra and its connections to geometry, 119--135, Contemp. Math., 555, Amer. Math. Soc., Providence, RI, 2011.

\bibitem{HL}%
C.~Huneke and G.~Lyubeznik, 
Absolute integral closure in positive characteristic, 
Adv. Math. \textbf{210} (2007), no.~2, 498--504. 

\bibitem{KM}
J.~Koll\'ar and S.~Mori, 
\textit{Birational geometry of algebraic varieties}, 
Cambridge Tracts in Mathematics, 134, Cambridge University Press, Cambridge, 1998. 

\bibitem{Kov}
S. ~Kov\'acs, 
Rational, log canonical, Du Bois singularities: on the conjectures of Koll\'ar and Steenbrink, 
Compositio Math. \textbf{118} (1999), no.~2, 123--133.

\bibitem{Laz}
R.~Lazarsfeld,
\textit{Positivity in Algebraic Geometry II}, 
Ergeb. Math. Grenzgeb. 3. Folge, A Series of Modern Surveys in Mathematics, 49, 
Springer-Verlag, Berlin, 2004. 

\bibitem{MS}
L.~Ma and K.~Schwede, 
Singularities in mixed characteristic via perfectoid big Cohen-Macaulay algebras, 
Duke Math. J. \textbf{170} (2021), no.~13, 2815--2890. 

\bibitem{MSTWW}
L.~Ma, K.~Schwede, K.~Tucker, J.~Waldron and J.~Witaszek, 
An analogue of adjoint ideals and PLT singularities in mixed characteristic, 
J. Algebr. Geom. \textbf{31} (2022), no.~3, 497--559.

\bibitem{Mat}
H.~Matsumura
\textit{Commutative ring theory},
Translated from the Japanese by M. Reid
Cambridge Stud. Adv. Math., \textbf{8}
Cambridge University Press, Cambridge, 1986.

\bibitem{PR}
F.~P\'erez and Rebecca~R.G., 
Characteristic-free test ideals, 
Trans. Amer. Math. Soc. B \textbf{8} (2021), 754--787. 

\bibitem{Sch03}
H.~Schoutens, 
Non-standard tight closure for affine $\C$-algebras, 
manuscripta math. \textbf{111}, 379--412 (2003).

\bibitem{Sch04}%
H.~Schoutens, 
Canonical big Cohen-Macaulay algebras and rational singularities, 
Illinois J. Math. \textbf{48} (2004) no.~1, 131--150. 

\bibitem{Sch05}%
H.~Schoutens, 
Log-terminal singularities and vanishing theorems via non-standard tight closure, 
J. Algebr. Geom. \textbf{14} (2005), 357--390.

\bibitem{Sch08}%
H.~Schoutens, 
Pure subrings of regular rings are pseudo-rational, 
Trans. Amer. Math. Soc. \textbf{360} (2008), 609--627.

\bibitem{Schw10}
K.~Schwede, 
Centers of $F$-purity, 
Math. Z. \textbf{265} (2010), no.~3, 687--714.

\bibitem{Schw11}
K.~Schwede, 
Test ideals in non-$\Q$-Gorenstein rings, 
Trans. Amer. Math. Soc. \textbf{363} (2011), no.~11, 5925--5941. 

\bibitem{ST14b}
K.~Schwede and K.~Tucker, 
Test ideals of non-principal ideals: Computations, Jumping Numbers, Alterations and Division Theorems, 
J. Math. Pures. Appl. \textbf{102} (2014), no.05, 891--929.

\bibitem{Sm}%
K.~E.~Smith, 
Tight closure of parameter ideals, 
Invent. Math. \textbf{115} (1994), no.~1, 41--60. 

\bibitem{Sta} T.~Stacks Project Authors, The Stacks Project.

\bibitem{Tak04}%
S.~Takagi, 
$F$-singularities of pairs and inversion of adjunction of arbitrary codimension
Invent. Math. \textbf{157} (2004), no.~1, 123-146.

\bibitem{Tak06}%
S.~Takagi, 
Formulas for multiplier ideals on singular varieties, 
Amer. J. Math. \textbf{128} no.~6 (2006), 1345--1362.

\bibitem{Tak08}%
S.~Takagi, 
A characteristic $p$ analogue of plt singularities and adjoint ideals, 
Math. Z. \textbf{259} (2008), no.~2, 321--341.

\bibitem{Tak13}%
S.~Takagi, 
Adjoint ideals and a correspondence between log canonicity and $F$-purity, 
Algebra Number Theory \textbf{7} (2013), no.~4, 917--942. 

\bibitem{TW}%
S.~Takagi and K.-i.~Watanabe, 
$F$-singularities: applications of characteristic $p$ methods to singularity theory, 
Sugaku Expositions \textbf{31} (2018), 1--42.

\bibitem{Yam22a}%
T.~Yamaguchi, 
A characterization of multiplier ideals via ultraproducts, 
manuscripta math. \textbf{172} (2023), 1153--1168.

\bibitem{Yam}%
T.~Yamaguchi, 
Big Cohen-Macaulay test ideals in equal characteristic zero via ultraproducts, 
Nagoya Math. J. \textbf{251} (2023), 549--575. 

\bibitem{Zhu}%
Z.~Zhuang, 
Direct summands of klt singularities, 
Invent. Math. \textbf{237} (2024), no.~3, 1683--1695.

\end{thebibliography}
\end{document}